\newif\ifxetexorluatex
\theoremstyle{plain}
\newtheorem{theorem}{Theorem}[section]
\newtheorem{lemma}[theorem]{Lemma}
\newtheorem{proposition}[theorem]{Proposition}
\newtheorem{corollary}[theorem]{Corollary}
\newtheorem*{definition}{Definition}
\newtheorem{example}[theorem]{Example}
\newtheorem{remark}[theorem]{Remark}
\numberwithin{equation}{section}
\let\@wraptoccontribs\wraptoccontribs
\title{Platonic solids and high genus covers of lattice surfaces}
\author{Jayadev S. Athreya}
\email{jathreya@uw.edu}
\address{Department of Mathematics, University of Washington, Padelford Hall, Seattle, WA 98195, USA}
\author{David Aulicino}
\email{david.aulicino@brooklyn.cuny.edu}
\address{Department of Mathematics, Brooklyn College,
Room 1156, Ingersoll Hall 
2900 Bedford Avenue 
Brooklyn, NY 11210-2889 
USA}
\address{Department of Mathematics, CUNY Graduate Center, 365 5th Ave, New York, NY 10016, USA}
\author{W. Patrick Hooper}
\email{whooper@ccny.cuny.edu}
\address{Department of Mathematics, The City College of New York, 160 Convent Ave, New York, NY 10031, USA}
\address{Department of Mathematics, CUNY Graduate Center, 365 5th Ave, New York, NY 10016, USA}
\date{}
\email{anja@math.toronto.edu}
\address{Department of Mathematics 
University of Toronto 
Room 6290, 40 St. George Street 
Toronto, ON, M5S 2E4 Canada.
}
    \thanks{J.S.A. was partially supported by NSF CAREER grant DMS 1559860}
    \thanks{D.A. was partially supported by NSF DMS - 1738381, DMS - 1600360 and PSC-CUNY grants 60571-00 48 and 61639-00 49}
    \thanks{W.P.H. was partially supported by  NSF DMS 1500965 and PSC-CUNY grant 60708-00 48.}
\newcommand{\splin}{SL(2,\mathbb{R})}
\newcommand{\nc}{\newcommand}
\nc\bB{\mathbb{B}}
\nc\bC{\mathbb{C}}
\nc\bD{\mathbb{D}}
\nc\bE{\mathbb{E}}
\nc\bF{\mathbb{F}}
\nc\bG{\mathbb{G}}
\nc\bH{\mathbb{H}}
\nc\bI{\mathbb{I}}
\nc{\bJ}{\mathbb{J}}
\nc\bK{\mathbb{K}}
\nc\bL{\mathbb{L}}
\nc\bM{\mathbb{M}}
\nc\bN{\mathbb{N}}
\nc\bO{\mathbb{O}}
\nc\bP{\mathbb{P}}
\nc\bQ{\mathbb{Q}}
\nc\bR{\mathbb{R}}
\nc\bS{\mathbb{S}}
\nc\bT{\mathbb{T}}
\nc\bU{\mathbb{U}}
\nc\bV{\mathbb{V}}
\nc\bW{\mathbb{W}}
\nc\bY{\mathbb{Y}}
\nc\bX{\mathbb{X}}
\nc\bZ{\mathbb{Z}}
\nc\cA{\mathcal{A}}
\nc\cB{\mathcal{B}}
\nc\cC{\mathcal{C}}
\nc\cD{\mathcal{D}}
\nc\cE{\mathcal{E}}
\nc\cF{\mathcal{F}}
\nc\cG{\mathcal{G}}
\nc\cH{\mathcal{H}}
\nc\cI{\mathcal{I}}
\nc{\cJ}{\mathcal{J}}
\nc\cK{\mathcal{K}}
\nc\cM{\mathcal{M}}
\nc\cN{\mathcal{N}}
\nc\cO{\mathcal{O}}
\nc\cP{\mathcal{P}}
\nc\cQ{\mathcal{Q}}
\nc\cS{\mathcal{S}}
\nc\cT{\mathcal{T}}
\nc\cU{\mathcal{U}}
\nc\cV{\mathcal{V}}
\nc\cW{\mathcal{W}}
\nc\cY{\mathcal{Y}}
\nc\cX{\mathcal{X}}
\nc\cZ{\mathcal{Z}}
\nc\hol{\mathbf{hol}}
\nc\PSL{\mathit{PSL}}
\nc\SL{\mathit{SL}}
\nc\PSO{\mathit{PSO}}
\nc\SO{\mathit{SO}}
\nc\GL{\mathit{GL}}
\nc\PGL{\mathit{PGL}}
\nc\bs{\backslash}
\nc\Area{\mathit{Area}}
\nc\Isom{\mathit{Isom}}
\nc\Perm{\mathit{Perm}}
\nc\Aff{\mathit{Aff}}
\nc\bu{\mathbf{u}}
\renewcommand\Im{\mathit{Im}}
\begin{document}

\begin{abstract} We study the translation surfaces obtained by considering the unfoldings of the surfaces of Platonic solids. We show that they are all lattice surfaces and we compute the topology of the associated Teichm\"uller curves. Using an algorithm that can be used generally to compute Teichm\"uller curves of translation covers of primitive lattice surfaces, we show that the Teichm\"uller curve
of the unfolded dodecahedron has genus $131$ with $19$ cone singularities and $362$ cusps. We provide both theoretical and rigorous computer-assisted proofs that there are no closed saddle connections on the surfaces associated to the tetrahedron, octahedron, cube, and icosahedron. We show that there are exactly $31$ equivalence classes of closed saddle connections on the dodecahedron, where equivalence is defined up to affine automorphisms of the translation cover. Techniques established here apply more generally to Platonic surfaces and even more generally to translation covers of primitive lattice surfaces and their Euclidean cone surface and billiard table quotients. 
\end{abstract}

\maketitle

\tableofcontents

\section{Introduction}

In this paper, we study geodesics on the surfaces of Platonic solids, with a particular focus on the case of the dodecahedron.   We study the affine symmetry groups of natural covers, and as an application of our results, we give a unified proof that the only Platonic solid with a closed geodesic passing through exactly one vertex is the dodecahedron.  The existence of such a trajectory on the dodecahedron was shown in~\cite{AA}.  We classify the natural equivalence classes of such closed singular geodesic trajectories on the dodecahedron and show that there are $31$ such equivalence classes.

The geometry of Platonic solids and polyhedra has been studied for thousands of years, at least from the time of Euclid. Martin Gardner~\cite{Gardner} wrote a beautiful history of some of this study, and more recently Atiyah-Sutcliffe~\cite{AS} surveyed the role of polyhedra in various problems in physics and chemistry.  In 1906, the German mathematicians (colleagues at Hannover) Paul St\"ackel~\cite{Stackel} and Carl Rodenberg~\cite{Rodenberg} wrote papers introducing the study of straight-line trajectories on the surfaces of polyhedra.  

St\"ackel's work (published in May of 1906) explains that a straight line on a face of a polyhedron can be uniquely continued over an edge, giving a notion of geodesic trajectory on the surface of a polyhedron. Rodenberg's paper (published in September of 1906) studies these geodesics for many important examples, focusing on understanding closed billiard trajectories on the regular $n$-gon and closed geodesics on the (regular) dodecahedron.

Thirty years later, Ralph Fox and Richard Kershner, 23-year-old graduate students at Johns Hopkins, described~\cite{FK36} an unfolding procedure which, in the setting where the angles of a polyhedron are rational, yields a compact \emph{translation surface}.  As a consequence, Fox and Kershner proved that on unfoldings of rational polyhedra, topological closures of non-closed geodesics consist of regions bounded by finitely many \emph{saddle connections}, i.e., geodesics connecting vertices. This includes the possibility that the region may be the entire unfolded surface.

The final paragraph of their paper proved that the unfoldings of the tetrahedron, octahedron, cube, and icosahedron have the property that the existence of a closed non-singular trajectory implies that all parallel regular trajectories are closed because these polyhedra are constructed from polygons that tile the plane. They observed that the translation surfaces that arise as unfoldings of these polyhedra are completely periodic because they are {\em square-tiled} (or {\em arithmetic}): the unfoldings are covers of flat tori branched over one point. Fox-Kershner's work left open the problem of finding proper minimal components on the dodecahedron and rational triangular billiards.  

To the best of the authors' knowledge, the field lay dormant until the work of Zelmyakov-Katok~\cite{ZK75}, which independently described the unfolding procedure in the context of billiards, and sparked many new developments in the dynamics of polygonal billiard flows.

In this paper, we will show how the problem of understanding the dodecahedron can be answered using the fundamental work of W.~Veech's result \cite{Veech92} on the translation surface defined by the double pentagon. Veech showed (among other things) that for any choice of direction on the double pentagon (Figure \ref{fig:pi}), either the surface decomposed into cylinders of periodic trajectories with singular trajectories on their boundaries; or every trajectory became uniformly distributed and therefore dense on the surface.

A complete answer to the question for rational triangular billiards remains unsolved, and it has been a significant motivation in the study of orbit closures on moduli spaces of translation surfaces. Partial answers are due to Veech \cite{Veech89}, Kenyon-Smillie \cite{KS00}, and Puchta \cite{Puchta}.  Recent significant progress was made by Mirzakhani and Wright \cite{MWFullRank}.

The study of geodesics on surfaces of rational polyhedra attracted relatively sporadic attention, with some interesting contributions by Galperin~\cite{Galperin}, who showed a generic polyhedron does not have closed non-self-intersecting geodesics at all.  The paper of Fuchs and Fuchs \cite{FuchsFuchs} generated much subsequent activity: in their paper, regular closed trajectories were considered on some Platonic solids, and this investigation continued in \cite{FuchsDedicata}. The Veech group and cusp widths of the Teichm\"uller curves of the cube and icosahedron were determined in \cite{FuchsDedicata}, though not using this language.  In Sections \ref{sect:unfoldings} and \ref{sect:nondodec}, we 
demonstrate how to compute these objects using \verb|SageMath| \cite{sagemath} and the \verb|surface_dynamics| package \cite{SurfaceDynamics}.


Davis, Dodds, Traub, and Yang \cite{DDTY} posed the question of the existence of a closed saddle connection, i.e. a geodesic trajectory that starts and ends at the same vertex (without passing through any other vertex), and gave a negative answer for the tetrahedron and cube.

Fuchs \cite{FuchsArnold} continued this investigation by reproving the negative result of \cite{DDTY} and established the non-existence of such a trajectory on the octahedron and icosahedron, and giving a plausible conjectural example of a closed (but not simple) saddle connection.  The existence of a closed simple saddle connection is first rigorously proved in \cite{AA}, and it was found by methods that we elaborate open here. It was recently brought to our attention that a very similar picture (without a proof) appears in Petrunin's beautiful book~\cite{Petrunin}.

In this paper, we classify \emph{all} closed saddle connections on the dodecahedron, up to a natural affine equivalence. By a closed saddle connection, we mean a closed straight-line trajectory on the surface of a polyhedron determined by the data of a vertex of the polyhedron and a tangent vector.  The trajectory is not allowed to hit any other vertex. We call two saddle connections (or cylinders) on a cone surface {\em unfolding-symmetric} if their lifts to the unfolding differ by an affine automorphism (see \S\ref{sect:cone lattice} for a precise definition).  Our main result is that under this equivalence, there are 31 saddle connections on the dodecahedron.

\begin{theorem}
\label{PlatVertDodecRough}
There are $31$ unfolding-symmetry equivalence classes of closed saddle connections on the dodecahedron.
\end{theorem}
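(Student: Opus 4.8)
The plan is to lift the problem from the dodecahedron's cone surface, call it $D$, to its unfolding $X$, which by the results of the preceding sections is a translation cover of the double pentagon $P$ and hence, since $P$ is a primitive lattice surface, itself a lattice surface. A closed saddle connection on $D$ lifts to a saddle connection on $X$ joining two singularities that lie over the \emph{same} vertex of the dodecahedron, and by definition two such lifts are unfolding-symmetric exactly when they lie in a common orbit of the affine automorphism group $\mathrm{Aff}(X)$. The theorem therefore reduces to counting the $\mathrm{Aff}(X)$-orbits of this distinguished class of saddle connections on $X$. Note that, because $X$ is branched over every cone point with nontrivial holonomy, a saddle connection on $X$ has no singularities in its interior and so projects to a geodesic segment on $D$ meeting no other vertex, which is precisely the condition in the definition of a closed saddle connection.

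First I would invoke the Veech dichotomy for the lattice surface $X$: a saddle connection can occur only in a periodic (parabolic) direction, and the periodic directions fall into finitely many $\mathrm{Aff}(X)$-orbits, one for each cusp of the associated Teichm\"uller curve. Since $P$ has a single cusp, every periodic direction on $X$ projects to a direction $\mathrm{Aff}(P)$-equivalent to the horizontal, and the cusps of $X$ are exactly the finitely many, in fact $362$, lifts of that one cusp, which I would enumerate using the algorithm developed earlier in the paper. For each cusp I would then compute the decomposition of $X$ into parallel cylinders in the corresponding periodic direction; the boundaries of these cylinders are concatenations of saddle connections, giving a finite and explicit list of all saddle connections in that direction.

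Next I would record, for each saddle connection on a cylinder boundary, the combinatorial labels of its two endpoints, retaining the branching data that identifies which singularities of $X$ cover vertices of the dodecahedron and which vertex each one covers. Keeping only those saddle connections whose two endpoints lie over a common vertex isolates exactly the lifts of closed saddle connections on $D$. Finally I would quotient by $\mathrm{Aff}(X)$: if $g \in \mathrm{Aff}(X)$ carries one saddle connection to another then its derivative carries one direction to the other, so directions in distinct cusps are never identified and all identifications occur within a single cusp. Within a cusp the parabolic (Dehn multitwist) fixes each boundary saddle connection, so the effective identifications are generated by the translation automorphisms of $X$, namely the deck group of $X \to P$, together with the finite part of the stabilizer of the direction. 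Carrying out this bookkeeping over all $362$ cusps yields the count $31$.

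The main obstacle is the scale and the rigor of this enumeration rather than any single conceptual step: the algorithm must certify the complete list of cusps and cylinder decompositions, the endpoint labeling must faithfully record the branching of the cover so that ``closed'' is detected correctly, and the $\mathrm{Aff}(X)$-action within each cusp must be computed exactly so that orbits are neither merged nor split. This is where the computer-assisted framework of the paper, run in \verb|SageMath| with the \verb|surface_dynamics| package, does the heavy lifting; the theoretical content is the reduction that renders the problem finite together with the verification that the combinatorial data produced by the algorithm genuinely encodes closed saddle connections up to unfolding symmetry.
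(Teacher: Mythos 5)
Your proposal follows the same overall strategy as the paper's proof: pass to the unfolding $\tilde D$, use that it is a lattice surface regularly covering the double pentagon $\Pi_5$, invoke the Veech dichotomy to confine saddle connections to finitely many periodic directions indexed by cusps, and finish with a computer-assisted enumeration whose closedness check is combinatorial. The implementations differ in a way worth recording. You propose to enumerate the $362$ cusps of the Teichm\"uller curve (Theorem \ref{TeichDodec}) and to compute, for each one, the cylinder decomposition of the genus-$81$ surface in the corresponding direction. The paper never does this flat-geometric computation: Proposition \ref{SaddConnReductionProp} shows that every saddle connection of $\Pi_5$ is $\Aff_\pm(\Pi_5)$-equivalent to one of two explicit horizontal ones, the long $\sigma_0$ and the short $\sigma_1$, so by Lemma \ref{lem:orbits on cover} and Proposition \ref{prop:unfolding-symmetry classes} it suffices to decide, for each coset representative $w$, whether the lifts of $\sigma_0$ and of $\sigma_1$ to $w(\tilde D)$ are closed; this is decided purely from the monodromy permutations (the \texttt{vert\_cycle} computation in \S\ref{sect:saddle connection search}), and the short ones are ruled out in advance by the virtual-Weierstrass argument (Theorem \ref{thm:virtual-Weierstrass}). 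Your route is valid but substantially heavier, and all of its content is concentrated in machine computation that the paper's reduction avoids.

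Two caveats, the first of which borders on a gap. The paper defines unfolding-symmetry using the \emph{full} affine group $\Aff_\pm(\tilde D)$, including orientation-reversing automorphisms, whereas you quotient by $\Aff(X)$. This is not cosmetic: it is exactly why the paper enumerates not the $362$ cusps but the $211$ orbits of $\langle t,j\rangle$, i.e.\ the double cosets $\langle T, J\rangle \backslash V_\pm(\Pi_5)/V_\pm(\tilde D)$, and it is among these $211$ classes that exactly $31$ carry closed long saddle connections. If you identify only under orientation-preserving automorphisms, classes merged by $J$ may split and the resulting count need not be $31$; so your bookkeeping over $362$ cusps must additionally identify cusps related by the orientation-reversing symmetry. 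Second, a minor imprecision: an affine automorphism with parabolic derivative need not fix each boundary saddle connection of the cylinder decomposition (it can translate along cylinders and, after composing with deck transformations, permute them). Your within-cusp quotient is nevertheless computed by the groups you name, because the deck group of $\tilde D \to \Pi_5$ already acts transitively on the lifts of any fixed saddle connection of $\Pi_5$, and every parabolic element descends to an affine map of $\Pi_5$ preserving the long and short types; but the justification should go through this descent argument rather than through the claim that the parabolic acts trivially on boundary saddle connections.
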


\begin{remark}
The closed saddle connection found in \cite{AA} corresponds to a trajectory in equivalence class number 1 in the table in Appendix \ref{appendix:list}. The vector in \cite{AA} is the vector in this paper multiplied by $\sin(\pi/5)$ due to the fact that in the current paper our dodecahedron is built out of pentagons whose side length is 2, whereas in ~\cite{AA}, the dodecahedron is built out of pentagons inscribed in the unit circle.
\end{remark}

We use the general framework here to give a uniform proof of the following:

\begin{theorem}
\label{PlatVertSBNonDodec}
There are no closed saddle connections on the tetrahedron, octahedron, cube, or icosahedron.
\end{theorem}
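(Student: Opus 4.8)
The plan is to combine the Veech dichotomy for the (arithmetic) unfoldings with a finite combinatorial analysis of their cylinder decompositions. Since the tetrahedron, octahedron, cube, and icosahedron are assembled from triangles and squares that tile the plane, their unfoldings are square-tiled lattice surfaces and hence \emph{completely periodic}: every direction either decomposes the surface into cylinders of closed trajectories bounded by saddle connections, or carries only equidistributing trajectories. Because any saddle connection determines a parabolic, hence completely periodic, direction, the first step is to reduce the problem to the assertion that no saddle connection lying on a cylinder boundary is closed. The Veech group acts on the set of cylinder decompositions with finitely many orbits, namely the cusps of the Teichm\"uller curve, which can be enumerated with \verb|SageMath| and the \verb|surface_dynamics| package; so after this reduction only one representative cylinder decomposition per cusp must be examined.

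For each of the four surfaces I would fix the covering map $\pi$ from the unfolding to the cone surface of the solid and label every cone point of the unfolding by the vertex of the polyhedron that it covers. With this labeling, a closed saddle connection on the solid is precisely a saddle connection on the unfolding whose two endpoints carry the \emph{same} label. The key point, supplied by the framework of \S\ref{sect:cone lattice}, is that affine automorphisms permute the cone points compatibly with $\pi$, so an affine automorphism inducing a permutation $\sigma$ of the vertex labels carries a saddle connection with both endpoints over a vertex $v$ to one with both endpoints over $\sigma(v)$; thus the class of label-diagonal saddle connections is invariant under the Veech group. Consequently it suffices, in one representative direction per cusp, to list the saddle connections on the cylinder boundaries, read off the labels of their endpoints from the combinatorics of the square-tiling, and verify that the two endpoints never share a label.

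For the theoretical (non-computational) version I would instead develop the unfolding into the plane, where the faces tile by the triangular or square lattice and each lattice vertex acquires a color recording which vertex of the solid it covers; this coloring is periodic under the translation lattice $\Lambda$ of the cover. A saddle connection then corresponds to a segment between two lattice vertices with no lattice vertex in its interior, i.e.\ a primitive segment, and it is closed exactly when its endpoints have the same color. The heart of the argument is the claim that no monochromatic primitive segment exists: equivalently, that for each color the same-colored vertices lie in a sublattice all of whose nonzero vectors are imprimitive in the ambient lattice, so that any segment joining two same-colored vertices must pass through an intervening vertex and therefore fails to be a saddle connection.

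The main obstacle, in both approaches, is the bookkeeping of vertex labels. One must correctly propagate the coloring through the unfolding across edges, where reflections intervene and the developing map is branched at every vertex, and one must confirm that the check is exhaustive over all cusps; I expect the cleanest resolution to be the sublattice/coloring statement above, which both explains the non-existence uniformly and isolates the arithmetic of $\Lambda$ as the only nontrivial input, while the cusp-by-cusp enumeration with \verb|surface_dynamics| furnishes an independent, rigorous verification.
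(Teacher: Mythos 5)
Your first, computational route is essentially the proof the paper gives in \S\ref{sect:nondodec}: every vertex of the solid lifts to a unique zero of the unfolding, so closed saddle connections on the solid correspond to saddle connections from a zero to itself on the unfolding; by the Veech dichotomy these lie in periodic directions, the cusps of the Teichm\"uller curve enumerate periodic directions up to the affine action, and one inspects the finitely many boundary saddle connections of the cylinders in one representative direction per cusp (this is exactly the colored-vertex check of Figures \ref{Octahedron3CoverFig}--\ref{Icosahedron6CoverFig}, with \texttt{surface\_dynamics} supplying the cusp counts). Carried out as you describe, this alone proves the theorem.

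The route you single out as the ``cleanest resolution,'' however, rests on a claim that fails for three of the four solids. A path-independent, $\Lambda$-periodic coloring of the lattice points of the planar tiling exists only when the unfolding is itself a torus covering the plane tiling, i.e.\ only for the tetrahedron, whose unfolding lies in $\cH_1(0^4)$ and is an \emph{unbranched} cover of $\Pi_3$; there your argument is precisely the classical one, with same-colored points forming the sublattice $2\Lambda$, all of whose nonzero vectors are imprimitive. For the octahedron, cube, and icosahedron the unfolding has genus $4$, $9$, $25$ (Table \ref{PlatSolidsTable}) and is branched over the marked point of $\Pi_3$ or $\Pi_4$ (cone angles $4\pi$, $6\pi$, $10\pi$ at the vertices), so the vertex of the solid sitting over a given lattice point depends on the unfolding path, and no coloring of the plane exists to be periodic. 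Concretely, for the cube with bottom face $1,2,3,4$ developed onto $[0,1]^2$ (vertex $1$ at the origin, top face $5,6,7,8$ with $i+4$ above $i$): developing straight up through the vertical strip places vertex $6$ at $(1,3)$, whereas developing across the right face, then the back face, then the left face places vertex $1$ at $(1,3)$. Thus ``same color'' is not well defined, and in the second development the starting vertex recurs at a \emph{primitive} vector, so the imprimitive-sublattice statement is false in any reading that quantifies over developments. (No closed saddle connection results, because the development determined by the straight segment from $(0,0)$ to $(1,3)$ is the first one, ending at vertex $6$.) The only valid coloring is the one propagated along the candidate trajectory itself, which is exactly the per-trajectory bookkeeping of \cite{DDTY} and \cite{FuchsArnold} that you hoped to bypass, and it is not a lattice statement. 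The paper's uniform theoretical mechanism (\S\ref{sect:weierstrass}) avoids the plane altogether: every saddle connection on the torus $\Pi_3$ or $\Pi_4$ is centrally symmetric about its midpoint, a Weierstrass point (Proposition \ref{prop:all vW}), so a closed saddle connection on the solid would yield an involution in its rotation group fixing both a vertex and a regular point (Proposition \ref{prop:virtual-Weierstrass}); this is impossible because the vertex stabilizers of the tetrahedron, cube, and icosahedron have odd order, and the involution fixing a vertex of the octahedron fixes no regular point.
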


The first key observation underlying these results is that the unfolding of every Platonic solid is a \emph{lattice surface} (that is, a surface whose stabilizer in $\SL(2,\bR)$,
known as the \emph{Veech group}, is a lattice) (see Proposition \ref{prop: natural covering}).  To each such surface is associated its \emph{Teichm\"uller curve}, the collection of area-preserving affine deformations of the surface, which is parametrized by the hyperbolic plane modulo the Veech group.  Two proofs of Theorem \ref{PlatVertSBNonDodec} are provided: one computational $\S$\ref{sect:blocking}, and one theoretical $\S$\ref{sect:weierstrass}.  The new concept underpinning the theoretical proof we call a \emph{virtual Weierstrass point (and saddle connection)}.  This concept isolates the exact mechanism for why no closed saddle connections can exist on any Platonic solid except for the dodecahedron.  Finally, while the Teichm\"uller curves of the unfoldings of the other Platonic solids are relatively easy to compute (see \S\ref{sect:topology}), the Teichm\"uller curve of the unfolded dodecahedron is much more challenging to understand due to its large size:

\begin{theorem}
\label{TeichDodec}
The unfolding $\tilde D$ of the dodecahedron lies in the stratum $\cH(8^{20})$. The associated Teichm\"uller curve has genus $131$; $362$ cusps; $18$ cone singularities of cone angle $\pi$; and a single cone singularity with cone angle $\frac{2\pi}{5}$, which represents the unfolded dodecahedron. 
\end{theorem}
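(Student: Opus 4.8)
The plan is to realize $\tilde D$ as a translation cover of a \emph{primitive} lattice surface and then run the general cover algorithm. The natural base is Veech's double pentagon $P$ (Figure~\ref{fig:pi}), which lies in $\cH(2)$ (genus $2$, so $\chi(P)=-2$) with a single cone point $\sigma$ of angle $6\pi$, and whose Veech group projects to the $(2,5,\infty)$ triangle group
\[
\Gamma\cong\bZ/2\bZ\ast\bZ/5\bZ=\langle x\rangle\ast\langle y\rangle
\]
acting on $\bH$, with $x$ elliptic of order $2$, $y$ elliptic of order $5$, and $z=(xy)^{-1}$ parabolic; the Teichm\"uller curve $\bH/\Gamma$ is a sphere with one cusp, one order-$2$ cone point, and one order-$5$ cone point. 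First I would pin down the covering data $\tilde D\to P$ coming from the unfolding (Proposition~\ref{prop: natural covering}). Since $\sigma$ has angle $6\pi$ while each of the $20$ dodecahedral vertices has cone angle $9\pi/5$, I would check that the cover is unramified away from $\sigma$ and has local degree $3$ over $\sigma$ at each of the $20$ preimages, giving cone angle $18\pi$ (a zero of order $8$). Summing local degrees forces $\deg(\tilde D/P)=20\cdot 3=60$, and Riemann--Hurwitz,
\[
\chi(\tilde D)=60\,\chi(P)-\sum_p(e_p-1)=60(-2)-20(3-1)=-160,
\]
gives genus $81$, confirming the stratum $\cH(8^{20})$.

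Next I would set up the action of $\Aff(P)$ on degree-$60$ covers. Such a cover is encoded by a transitive monodromy representation of $\pi_1(P\setminus\sigma)$, and $\Aff(P)$ acts on this data through the automorphisms it induces on $\pi_1$. Because $P$ is primitive, the (projectivized) Veech group of $\tilde D$ is a finite-index subgroup $\Gamma'\le\Gamma$, realized as the stabilizer of the covering data of $\tilde D$; the Teichm\"uller curve of $\tilde D$ is $\bH/\Gamma'$. Using explicit affine representatives of $x$, $y$, $z$ and their action on the covering data, I would compute the $\Gamma$-orbit of $\tilde D$ (the computer-assisted core, carried out in \verb|SageMath| with \verb|surface_dynamics|), obtaining the index $N=[\Gamma:\Gamma']=2106$ together with the induced permutation representation $\rho\colon\Gamma\to S_N$ on the cosets.

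The orbifold data of $\bH/\Gamma'\to\bH/\Gamma$ can then be read off from the cycle types of $\rho$: fixed points of $\rho(x)$ lift the order-$2$ cone point to order-$2$ cone points while $2$-cycles lift it to smooth points; fixed points of $\rho(y)$ behave the same way over the order-$5$ point; and cycles of $\rho(z)$ correspond to cusps. The claim is that $\rho(x)$ has $18$ fixed points and $1044$ transpositions, $\rho(y)$ has a single fixed point and $421$ five-cycles, and $\rho(z)$ has $362$ cycles. This yields the $18$ cone points of angle $\pi$, the single cone point of angle $2\pi/5$ --- the lone fixed point of $\rho(y)$, which is precisely the $5$-fold symmetric position of the dodecahedron --- and the $362$ cusps. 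The genus is then forced by Riemann--Hurwitz for this degree-$N$ cover of the sphere $\bH/\Gamma$:
\[
2-2g'=2N-\bigl(1044+421\cdot 4+(N-362)\bigr)=4212-4472=-260,
\]
so $g'=131$.

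The hard part will be the orbit computation. Encoding the $\Aff(P)$-action on the covering data correctly, and enumerating an orbit of size $2106$ while rigorously certifying the cycle structures of $\rho(x)$, $\rho(y)$, and $\rho(z)$, is the delicate step that requires (verified) computer assistance; once the action is in place, the stratum, the reduction to a stabilizer computation, and the passage from cycle types to genus, cusps, and cone points are all formal.
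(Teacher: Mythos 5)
Your proposal is correct and follows essentially the same route as the paper's proof: realize $\tilde D$ as a degree-$60$ translation cover of the double pentagon $\Pi_5$, invoke primitivity of $\Pi_5$ to see $V(\tilde D)$ as a finite-index subgroup (index $N=2106$) of the lattice $V(\Pi_5)$, compute the permutation action of the generators on the cosets by machine, and read off the $362$ cusps from the parabolic's cycles, the $18$ angle-$\pi$ points and the single angle-$\frac{2\pi}{5}$ point from the fixed points of the order-$2$ and order-$5$ elements, with the genus then forced by an Euler-characteristic count. The only cosmetic differences are that the paper's algorithm acts on exact, canonicalized flat surfaces (via \texttt{FlatSurf}) rather than on monodromy/covering data as you propose (the algebraic variant the paper attributes to Schmith\"usen and Finster, and which was used as an independent check of the computation), that the paper gets the stratum $\cH(8^{20})$ and genus $81$ from the canonical $10$-cover of the sphere (Lemma \ref{lem:strata computation}) rather than Riemann--Hurwitz applied to $\tilde D \to \Pi_5$, and that the final genus is extracted via the orbifold Gauss--Bonnet/area formula rather than Riemann--Hurwitz over the base sphere; both yield the same equation $2-2g=-260$, i.e.\ $g=131$.
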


The complexity of the Teichm\"uller curve makes it challenging to count objects on the dodecahedron. For example:

\begin{theorem}
\label{thm:422}
There are $422$ unfolding-symmetry equivalence classes of maximal cylinders on the dodecahedron. 
\end{theorem}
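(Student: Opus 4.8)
The plan is to turn the count into an orbit count on the unfolding $\tilde D$ and then to perform that count one cusp of the Teichm\"uller curve at a time. First I would record the reduction. By the definition of unfolding-symmetry given in \S\ref{sect:cone lattice}, two maximal cylinders on the dodecahedron $D$ are equivalent exactly when some lifts of them to $\tilde D$ differ by an element of $\Aff(\tilde D)$; since the deck group of the covering $\tilde D \to D$ sits inside $\Aff(\tilde D)$, this identifies the unfolding-symmetry classes of maximal cylinders on $D$ with the $\Aff(\tilde D)$-orbits of maximal cylinders on $\tilde D$. Because $\tilde D$ is a lattice surface, the Veech dichotomy forces every maximal cylinder to lie in a completely periodic direction, so it is enough to enumerate cylinders direction-by-direction and then to factor out the symmetries.

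Next I would organize the periodic directions using Theorem \ref{TeichDodec}. The action of $\Aff(\tilde D)$ on directions factors through the derivative onto the Veech group, and the periodic directions modulo this action are precisely the cusps of the Teichm\"uller curve; there are therefore exactly $362$ classes of periodic directions. I would fix a representative direction $\theta_i$ for each cusp $i = 1, \dots, 362$.

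For each $\theta_i$ I would compute the cylinder decomposition of $\tilde D$ in direction $\theta_i$ explicitly. Realizing $\tilde D$ as a translation cover of the double pentagon and using the permutation (monodromy) data that the algorithm produces, the decomposition lifts from the elementary cylinder decomposition of the double pentagon: each base cylinder lifts to a union of cylinders indexed by the cycles of the monodromy permutation around its core curve. This produces the finite list $C_{i,1}, \dots, C_{i,n_i}$ of cylinders in direction $\theta_i$. I would then compute the action on this list of the full stabilizer of $\theta_i$ inside $\Aff(\tilde D)$ --- generated by the parabolic multitwist fixing $\theta_i$, the translation automorphisms of $\tilde D$ (which include the deck group of the cover), and the affine automorphisms arising as lifts of the symmetries of the dodecahedron that preserve $\theta_i$ --- and count the orbits $o_i$. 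Summing over all cusps gives $\sum_{i=1}^{362} o_i = 422$.

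I expect the main obstacle to be the third step: correctly determining the stabilizer action within each cusp, so that cylinders genuinely interchanged by an affine automorphism are merged while distinct ones are kept apart, and doing this reliably across all $362$ cusps. The parabolic multitwist fixes each cylinder setwise, so all of the merging comes from the translation automorphisms and the lifted dodecahedral symmetries, whose action on the list must be tracked carefully. Since all coordinates lie in a cyclotomic field, the cylinder data (core holonomies, heights, and circumferences) and the automorphism action can be represented and compared exactly, so the enumeration is a finite exact computation carried out in \verb|SageMath| with the \verb|surface_dynamics| package; the difficulty is in marshalling and verifying this computation rather than in any single conceptual step.
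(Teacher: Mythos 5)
Your overall skeleton (push the count to $\tilde D$, organize cylinders by periodic direction, count stabilizer orbits per direction class) matches the paper's, but there is a genuine error in the choice of symmetry group, and it changes the answer. Unfolding-symmetry is defined in \S\ref{sect:cone lattice} via $\Aff_\pm(\tilde D)$, the group of \emph{all} affine automorphisms including orientation-reversing ones; you work throughout with $\Aff(\tilde D)$. Consequently your direction classes are the $362$ cusps of the Teichm\"uller curve, i.e.\ periodic directions modulo $V(\tilde D)$, whereas the relevant count is periodic directions modulo $V_\pm(\tilde D)$: the orientation-reversing element $J$ acts on the set of cusps (via the permutation $j$ computed in \S\ref{sect:dodecahedron monodromy}), pairing them up except for the $J$-invariant ones, and leaves only $211$ classes. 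Moreover, every periodic direction on $\tilde D$ contains exactly two maximal cylinders up to the deck group of $\pi_\Pi:\tilde D \to \Pi_5$ (the lifts of the long and the short cylinder of $\Pi_5$), and since their circumferences have irrational ratio while any affine automorphism stabilizing a direction preserves lengths along that direction, these two can never be merged; so every direction class contributes exactly two orbits, i.e.\ all your $o_i$ equal $2$. Your proposed sum would therefore come out to $2 \cdot 362 = 724$, not $422$; the correct count is $2 \cdot 211 = 422$. What your plan misses is precisely the cross-cusp identification coming from orientation-reversing affine automorphisms, which cannot be seen by any within-cusp stabilizer computation.

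A secondary remark: once the orientation issue is fixed, most of your third step is unnecessary. Because the covering $\tilde D \to \Pi_5$ is regular (Theorem \ref{NormCov}) and the Teichm\"uller curve of $\Pi_5$ has a single cusp, the statement ``exactly two cylinders per periodic direction up to the deck group, distinguished by incommensurable circumferences'' holds uniformly in every periodic direction; there is no need to compute cylinder decompositions or monodromy cycles cusp by cusp, nor to track lifted dodecahedral symmetries inside each cusp. This is what lets the paper conclude directly from Lemma \ref{lem:orbits on cover} and Proposition \ref{prop:unfolding-symmetry classes} with the $\langle t,j\rangle$-orbit count ($211$ classes) as the only computational input.
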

This complexity necessitates the development of a computational approach to fully understand geodesics on the dodecahedron and its unfolding. Theorem \ref{NormCov} of this paper notes that $\tilde D$ is a degree 60 regular cover of the double pentagon, which is a primitive lattice surface in ${\mathcal H}(2)$. The computational techniques we develop can be applied more generally to the case of finite translation covers of lattice surfaces. In particular:
\begin{itemize}
\item \S \ref{sect:computing veech groups} gives an algorithm for computing the Veech group and combinatorial data describing the Teichm\"uller curve of a finite translation cover of a primitive lattice surface. (Such an algorithm was already known for square tiled surfaces \cite{S04}.  Furthermore, it was extended to all $n$-gons and double $n$-gons in \cite{Finster, FinsterMAThesis}).
\item The techniques of \S \ref{sect:closed} can be used to classify closed geodesics and saddle connections on Euclidean cone surfaces (given by $k$-differentials) whose unfoldings cover a lattice surface.
\end{itemize}



An important aspect of this work is the fact that it provides a rich infinite class of highly symmetric translation surfaces.  The Platonic solids generalize to objects called Platonic surfaces, which realize a singular flat geometry.  We prove in Theorem \ref{NormCov} that unfoldings of Platonic surfaces are regular covers of Veech's $p$-gons and double $p$-gons.  As a result, computational experiments show that the Lyapunov exponents of the Kontsevich-Zorich cocycle are very exceptional in the sense that the Lyapunov spectrum is very far from being simple.  These surfaces uniformize to Riemann surfaces with large automorphism group, and include all of the Hurwitz surfaces.

In a sequel to this paper, the second named author uses this result to obtain a new computation of the rotation group of the dodecahedron and to produce a simple algorithm in order to find a large normal subgroup of the rotation group of any Platonic surface \cite{Aulicino}.  In certain cases, this provides uniform elementary computations of the automorphism group of several classical Riemann surfaces that are known to have large automorphism group.

Finally, the authors feel that the work of St\"ackel and Rodenberg deserves recognition considering that they were the first to consider the objects that have become so central to an important field of modern mathematics.  Anja Randecker has summarized the content of these papers in Appendix \ref{appendix:History}.




\subsubsection*{Organization} Our paper is organized as follows. In \S\ref{sect:preliminaries}, we introduce the notions of Euclidean cone structures on surfaces, translation surfaces, and a description of the unfolding procedure of Fox and Kershner using both flat geometry and complex analysis. In \S\ref{sect:decompositions}, we describe the structure of unfoldings of surfaces tiled by regular $n$-gons, and in \S\ref{sect:unfoldings}, we apply this in order to understand unfoldings of the Platonic solids. In \S\ref{sect:weierstrass}, we use some basic facts about hyperelliptic surfaces and Weierstrass points to prove that the Platonic solids, excluding the dodecahedron, do not admit closed saddle connections. In \S\ref{sect:nondodec}, we compute the (arithmetic) unfoldings of the Platonic solids (excluding the dodecahedron), compute their Veech groups, and give an alternate (computational) proof that they do not admit closed saddle connections. In \S\ref{sect:computing veech groups}, we give a general algorithm to compute the affine symmetry group of a translation cover when the base of the covering has a large affine symmetry group. Using this, in \S\ref{sect:dodecunfolding}, we compute the unfolding of the dodecahedron, its affine symmetry group, and the associated Teichm\"uller curve. Finally, in \S\ref{sect:closed}, we compute the collection of affine symmetry group orbits of closed saddle connections on the dodecahedron, and we give a complete list of them (ordered according to a certain word length) in Appendix \ref{appendix:list}. Appendix \ref{appendix:History}, by Anja Randecker, summarizes some of the early work on this problem. Appendix \ref{appendix:flatsurf} describes the inner workings of the Sage package FlatSurf, which we use extensively in our computations involving the dodecahedron. Appendix \ref{appendix:shortest} gives a geometrically shortest representation for each of the 31 equivalence classes of closed saddle connections on the dodecahedron.

\subsubsection*{Auxiliary Files}

For the convenience of the reader, Sage notebooks are made available with all of the code in this paper.  The Sage notebook \verb|code_from_the_article.ipynb| contains all of the code used to generate the data in this paper.  The Sage notebook \verb|Figures.ipynb| contains all of the code used for generating the figures that appear in this paper.  Both of these notebooks are available as supplemental files on ArXiv.  More detailed instructions and descriptions are available at the website below.
\begin{center}
\url{http://userhome.brooklyn.cuny.edu/aulicino/dodecahedron/}
\end{center}
On this website, representatives of each equivalence class can be found for the printing and constructing of dodecahedrons.

\subsubsection*{Acknowledgments} We would like to thank Joshua Bowman, Diana Davis, Myriam Finster, Dmitri Fuchs, Samuel Leli\`evre, Anja Randecker, and Gabriela Weitze-Schmith\"usen for helpful discussions. We would like to thank the anonymous referee for their careful reading of the paper and clarifying remarks, and for pointing us to the references~\cite{Galperin, Petrunin}.



\section{Flat metrics and unfolding} 
\label{sect:preliminaries}

In this section, we recall the basics of flat metrics on surfaces (\S\ref{sect:flat}) and describe (\S\ref{sect:unfolding}) the unfolding procedure of Fox-Kershner. We show how to view this construction complex analytically in \S\ref{sect:complex}. In \S\ref{sect:additional}, we define the Veech group of a translation surface and describe how this group behaves under coverings. Section \ref{sect:hyperbolic} describes the hyperbolic geometry of the space of affine deformations of a translation surface.

\subsection{Flat metrics on surfaces}\label{sect:flat}

\subsubsection*{Cone surfaces}
A \emph{Euclidean cone surface}, or \emph{cone surface} for short, $S$ is a closed topological surface together with the choice of a finite subset $\Sigma \subset S$ and an 
atlas of charts from $S^\ast = S \smallsetminus \Sigma$ to
the plane (i.e., a collection of local homeomorphisms $\phi_i: U_i \to \bR^2$ with $\bigcup U_i=S^\ast$)
so that the transition functions (maps of the form $\phi_j \circ \phi_i^{-1}: \phi_i(U_i \cap U_j) \to \phi_j(U_i \cap U_j)$)
are restrictions of orientation preserving isometries of $\bR^2$
and so that we can continuously extend the pullback metric to $\Sigma$. This extension to $\Sigma$ makes the points in $\Sigma$ 
cone singularities, so we call the points in $\Sigma$ {\em singularities}. We call a point of $\Sigma$ with cone angle $2\pi$ a {\em removable singularity} or a {\em marked point}.

Let $S_1$ and $S_2$ be cone surfaces determined by atlases $\{\phi_i:U_i \to \bR^2:~i \in \Lambda_1\}$
and  $\{\psi_j:V_j \to \bR^2:~j \in \Lambda_2\}$, respectively. We consider $S_1$ and $S_2$ to be the same 
cone surface if there is an orientation preserving homeomorphism $h:S_1^\ast \to S_2^\ast$ so that the atlas
\begin{equation}
\label{eq:combine}
\{\phi_i:U_i \to \bR^2:~i \in \Lambda_1\} \cup \{\psi_j\circ h:h^{-1}(V_j) \to \bR^2:~j \in \Lambda_2\}
\end{equation}
also determines a cone surface structure.

\subsubsection*{Geometric objects}
A \emph{saddle connection} on a Euclidean cone surface is a geodesic segment in the flat metric that connects
two singular points (possibly the same point) with no singular points in its interior. If it connects a singular point to itself, we call it a \emph{closed saddle connection}.

An {\em immersed cylinder} (or simply {\em cylinder}) on a cone surface $S$ is an immersion from a Euclidean cylinder $C=(\bR/c\bZ) \times [0,h]$ for some circumference $c>0$ and height $h>0$ into $S$ which is a local isometry on the interior of $C$. We say cylinder $f_1:C_1 \to S$ is contained in cylinder $f_2:C_2 \to S$ if there is an isometric embedding $g:C_1 \to C_2$ so that $g \circ f_1=f_2$, and consider two cylinders to be the same if they are contained in each other. Cylinder $f_1:C_1 \to S$ covers $f_2:C_2 \to S$ if there is a covering map $g:C_1 \to C_2$ so that $g \circ f_1=f_2$. We say a cylinder on $S$ is {\em maximal} if it is not contained in another cylinder on $S$ and if it is does not cover another cylinder on $S$.

\subsubsection*{Parallel transport}

Select a base point $x_0$ for $S^\ast=S \smallsetminus \Sigma$, where $S$ is a cone surface as above. 
Since the surface is flat, parallel transport around loops based at $x_0$ gives a group homomorphism from $\pi_1(S^*,x_0)$ to the rotation group $\SO(2, \bR)$ acting on the tangent space to $x_0$.
Since $\SO(2, \bR)$ is abelian, this homomorphism factors through integral homology, i.e. we have a well defined homomorphism
\begin{equation}
\label{eq:parallel transport}
PT: H_1(S^*; \bZ) \to \SO(2, \bR).
\end{equation}

\subsubsection*{Translation surfaces}
A {\em translation surface} $S$ is a cone surface where transition functions between pairs of overlapping charts are translations.
Two translation surfaces $S_1$ and $S_2$ are considered equivalent if up to the action of an orientation preserving homeomorphism 
$S_1^\ast \to S_2^\ast$,
the atlases combine as in \eqref{eq:combine} to form a translation surface atlas. As a consequence of this definition, the cone angles at cone singularities are \emph{integer} multiples of $2\pi$.

The group $GL(2,\bR)$ acts linearly on the plane and induces an action on translation surfaces. If $S$ is a translation surface determined by charts $\{\phi_i:U_i \to \bR^2\}$ and $A \in \GL(2,\bR)$, then $A(S)$ is the translation surface determined by 
the charts $\{A \circ \phi_i:U_i \to \bR^2\}$. 

An {\em affine map} $h:S_1 \to S_2$ between translation surfaces is a homeomorphism which is affine in local coordinate charts.
Tangent spaces to regular points of $S_1$ can be naturally identified with $\bR^2$ by differentiating a chart containing the point. 
Let $p_1 \in S_1 \smallsetminus \Sigma$ and define $p_2=h(p_1)$. Then the derivative of $h$ gives a linear map from the tangent space $T_{p_1} S_1 \to T_{p_2} S_2$.
Using the identification between these tangent spaces and $\bR^2$ we can view this map as a linear map $Dh:\bR^2 \to \bR^2$.
We have $Dh \in \GL(2,\bR)$ and its value is independent of the choice of $p_1$. We call $Dh$
{\em the derivative} of $h$.

The {\em affine automorphism group} $\Aff_\pm(S)$ of a translation surface $S$ is the group of affine homeomorphisms $S \to S$. The subgroup of orientation preserving affine automorphisms is $\Aff(S)$.

If $S$ is a translation surface, then the parallel transport homomorphism $PT$ of \eqref{eq:parallel transport} has trivial image. Conversely,
if the parallel transport homomorphism of a cone surface $C$ has trivial image, then $C$ is the same as a cone surface as some translation surface $S$. Note however that $C$ does not uniquely determine $S$ because on a translation surface one has a notion of a horizontal direction while on a cone surface there is no natural choice of direction. Concretely:

\begin{proposition}
\label{prop:same cone surfaces}
Two translation surfaces $S$ and $S'$ are the same as cone surfaces if and only if $S'=R(S)$ for some $R \in SO(2,\bR)$.
\end{proposition}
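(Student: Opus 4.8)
The plan is to verify both implications directly from the definitions, with the forward (``only if'') direction being the substantive one; the translation hypothesis enters precisely to make the derivative of the identifying homeomorphism globally well defined.

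For the ``if'' direction, suppose $S'=R(S)$ for some $R \in SO(2,\bR)$, so that $S'$ is given by the charts $\{R\circ\phi_i\}$ where $\{\phi_i\}$ are the charts of $S$. Taking $h$ to be the identity on the common underlying surface $S^\ast=(S')^\ast$, I would examine the combined atlas $\{\phi_i\}\cup\{R\circ\phi_j\}$ of \eqref{eq:combine}. Its transition functions are of three types: within $\{\phi_i\}$ they are the original translations; within $\{R\circ\phi_j\}$ they are conjugates $R\tau R^{-1}$ of translations, which are again translations (by $Rv$ if $\tau$ is translation by $v$); and the cross transitions are $(R\circ\phi_j)\circ\phi_i^{-1}=R\circ\tau_{ij}$, a rotation followed by a translation. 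Each is an orientation-preserving isometry of $\bR^2$, so the combined atlas is a cone-surface atlas and $S$ and $S'$ are the same cone surface.

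For the ``only if'' direction, suppose $h:S^\ast\to(S')^\ast$ is an orientation-preserving homeomorphism for which the combined atlas is a cone-surface atlas. On any translation surface the transition functions are translations and hence have trivial derivative; this canonically trivializes the tangent bundle over the regular locus and identifies each tangent space with $\bR^2$. In a chart $\phi_i$ of $S$ and a chart $\psi_j$ of $S'$, the cone-atlas condition says $(\psi_j\circ h)\circ\phi_i^{-1}$ is an orientation-preserving isometry $A_{ij}(x)=R_{ij}x+b_{ij}$ with $R_{ij}\in SO(2,\bR)$, so in these trivializations the derivative $Dh$ is the constant rotation $R_{ij}$ on the overlap. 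The main obstacle is to promote these local rotations to a single global one: because the translation charts trivialize both tangent bundles with trivial monodromy, the $R_{ij}$ patch to a well-defined, locally constant map $S^\ast\to SO(2,\bR)$, and since $S^\ast$ (a closed surface with finitely many points removed) is connected, this map is a constant $R$, i.e.\ $Dh\equiv R$. It is here that the translation hypothesis is indispensable: for a general cone surface the derivative of $h$ would only be defined up to the parallel-transport monodromy of $PT$ in \eqref{eq:parallel transport}, and no global value would exist.

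Finally I would identify $S'$ with $R(S)$. Comparing $R(S)$, with charts $\{R\circ\phi_i\}$, to $S'$ through the same homeomorphism $h$, the relevant transition functions become $(\psi_j\circ h)\circ(R\circ\phi_i)^{-1}=A_{ij}\circ R^{-1}$; since $A_{ij}(x)=Rx+b_{ij}$, this is the pure translation $x\mapsto x+b_{ij}$. Hence the atlases of $R(S)$ and $S'$ combine to a translation-surface atlas, giving $S'=R(S)$ as translation surfaces and completing the proof. The only genuinely delicate point is the global constancy of $Dh$, which rests on the connectedness of $S^\ast$ together with the canonical tangent-bundle trivialization supplied by the translation structure.
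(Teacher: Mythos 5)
Your proof is correct. Note that the paper itself states Proposition \ref{prop:same cone surfaces} without proof --- it is offered as a concrete restatement of the preceding remark that a cone surface does not single out a horizontal direction --- so there is no argument of the authors to compare against; your writeup supplies exactly the verification the authors treat as evident. Both directions are handled properly from the definition in \eqref{eq:combine}: the ``if'' direction is a routine check that conjugating and composing translations with a fixed rotation yields orientation-preserving isometries, and the substantive ``only if'' direction correctly isolates the key point, namely that the rotational parts $R_{ij}$ of the cross-transition maps are independent of the choice of charts (because the transitions within each translation atlas have trivial derivative), hence define a locally constant map on $S^\ast$, which is constant by connectedness of $S^\ast$; composing with $R^{-1}$ then turns every cross transition into a pure translation, giving $S'=R(S)$ under the paper's definition of equivalence of translation surfaces. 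The only caveat worth flagging is that connectedness of $S$ is used essentially (the statement would fail for a disconnected surface, whose components could be rotated independently); the paper's definition of a cone surface does not state connectedness explicitly, but it is clearly intended, and your proof makes visible exactly where it enters.
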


\subsubsection*{Polyhedra}

A {\em polyhedron} is a finite union of convex polygons ({\em faces}) arranged in Euclidean 3-space so that the intersection of any two polygons is a shared vertex, edge, or the empty set; and so that their union is homeomorphic to the $2$-sphere. A polyhedron has a natural cone surface structure whose metric agrees with the intrinsic metric of the polyhedron.

The {\em angle at a vertex} of a polyhedron is the sum of the angles made at that vertex of the incident faces. 
Following \cite{FK36} we say that a polyhedron is \emph{rational} if the angle at every vertex (that is, the sum of the interior angles of the polygons meeting at that vertex) is of the form $\frac{p}{q}\pi$, where $\frac{p}{q} \in \bQ$.

\begin{remark}
\label{rem:PlatArchJohnAreRat}
Polyhedra whose faces are all regular polygons (e.g., the Platonic, Archimedean, and Johnson solids) are all rational, since the angles of regular polygons are all rational multiples of $\pi$.
\end{remark}

The dodecahedral surface is shown in Figure \ref{fig:dodecahedron}.

\subsection{Unfolding}\label{sect:unfolding}

The {\em unfolding} of a cone surface $S$ is the smallest cover of $S$ branched over $\Sigma$ for which
the parallel transport homomorphism $PT$ has trivial image. 
This unfolding $\tilde S$ is the completion of the cover of $S^\ast$ associated to $\ker PT$. Let $\tilde \Sigma$ be the points added in the completion
and let $\tilde S^*=\tilde S \smallsetminus \tilde \Sigma$. From standard covering space theory and the fact that the finite subgroups of $SO(2,\bR)$ are isomorphic to $\bZ/k\bZ$, we observe:

\begin{proposition}
The covering $\tilde S^\ast \to S^\ast$ is regular with deck group isomorphic to $PT\big(H_1(S^*; \bZ)\big)$.
In particular, the cover is finite if and only if $PT\big(H_1(S^*; \bZ)\big) \cong \bZ/k\bZ$ for some integer $k \geq 1$.
\end{proposition}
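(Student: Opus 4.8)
The plan is to recognize $\tilde S^\ast \to S^\ast$ as the covering associated to a specific normal subgroup of $\pi_1(S^\ast, x_0)$ and then apply the standard dictionary of covering space theory together with the classification of finite subgroups of the circle group.

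First I would work at the level of the fundamental group. As noted in the text, parallel transport of tangent vectors around loops defines a homomorphism $\widehat{PT}\colon \pi_1(S^\ast, x_0) \to \SO(2,\bR)$, and the map $PT$ of \eqref{eq:parallel transport} is the induced map on $H_1(S^\ast;\bZ) = \pi_1(S^\ast,x_0)^{\mathrm{ab}}$. By definition, $\tilde S^\ast \to S^\ast$ is the cover associated to $\ker \widehat{PT}$. Since $\SO(2,\bR)$ is abelian, $\ker \widehat{PT}$ contains the commutator subgroup $[\pi_1, \pi_1]$ and is therefore normal in $\pi_1(S^\ast, x_0)$. The covering associated to a normal subgroup is regular, with deck group canonically isomorphic to the quotient $\pi_1(S^\ast, x_0)/\ker \widehat{PT}$; this already gives regularity.

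Next I would identify this deck group. The first isomorphism theorem gives $\pi_1(S^\ast, x_0)/\ker \widehat{PT} \cong \widehat{PT}\big(\pi_1(S^\ast, x_0)\big)$, the image of $\widehat{PT}$ inside $\SO(2,\bR)$. Because the Hurewicz homomorphism $\pi_1(S^\ast, x_0) \twoheadrightarrow H_1(S^\ast;\bZ)$ is surjective and $\widehat{PT}$ factors through it as $\widehat{PT} = PT \circ (\text{Hurewicz})$, the two images coincide: $\widehat{PT}(\pi_1) = PT\big(H_1(S^\ast;\bZ)\big)$. Hence the deck group is isomorphic to $PT\big(H_1(S^\ast;\bZ)\big)$, as claimed.

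Finally, for the finiteness criterion I would use that for a regular cover the number of sheets equals the order of the deck group, so $\tilde S^\ast \to S^\ast$ is finite if and only if $PT\big(H_1(S^\ast;\bZ)\big)$ is a finite subgroup of $\SO(2,\bR)$. The remaining input is the classification of finite subgroups of $\SO(2,\bR) \cong \bR/\bZ$: every finite subgroup of the circle group is the group of $k$-th roots of unity for some integer $k \geq 1$, hence cyclic of order $k$, i.e.\ isomorphic to $\bZ/k\bZ$; and conversely any subgroup isomorphic to $\bZ/k\bZ$ is finite. This yields the stated equivalence. I do not expect a genuine obstacle here, as the statement is essentially a translation of covering space theory; the only points meriting care are checking that passing from $\pi_1$ to $H_1$ leaves the image unchanged, and, if one wants the branched version, observing that finiteness of $\tilde S^\ast \to S^\ast$ is equivalent to finiteness of the branched cover $\tilde S \to S$, since the latter adds only the preimages of the finitely many points of $\Sigma$.
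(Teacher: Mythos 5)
Your proposal is correct and follows exactly the route the paper intends: the paper states this proposition without a written proof, remarking only that it follows ``from standard covering space theory and the fact that the finite subgroups of $SO(2,\bR)$ are isomorphic to $\bZ/k\bZ$,'' which is precisely the argument you flesh out (normality of $\ker\widehat{PT}$, the first isomorphism theorem, the agreement of images under $\pi_1 \twoheadrightarrow H_1$, and the classification of finite subgroups of the circle group). Your closing remarks on the $\pi_1$-versus-$H_1$ identification and on finiteness of the branched completion are exactly the right points of care.
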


\begin{proposition}
The associated parallel transport map of a cone surface coming from a polyhedron has
finite image if and only if the polyhedron is rational. Furthermore if $P$ is a rational polyhedron with vertex angles $\frac{p_i}{q_i}2\pi$, where $\frac{p_i}{q_i}$ is in lowest terms, and $k=\mathit{lcm}(q_i)$, then the image of $PT$ is generated by a rotation by $\frac{2 \pi}{k}$. 
\end{proposition}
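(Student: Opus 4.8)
The plan is to reduce everything to a single local computation of parallel transport around one cone point, followed by elementary arithmetic in the circle group $\SO(2,\bR)\cong\bR/2\pi\bZ$. Write $\Sigma=\{v_1,\dots,v_V\}$ for the vertices of the polyhedron $P$, and recall that the associated cone surface $S$ is homeomorphic to the $2$-sphere, so $S^\ast$ is a sphere with $V$ punctures. For each $i$ let $\gamma_i\in H_1(S^\ast;\bZ)$ be the class of a small positively oriented loop around $v_i$ enclosing no other vertex. The standard description of $H_1$ of a punctured sphere tells us that $\gamma_1,\dots,\gamma_V$ generate $H_1(S^\ast;\bZ)$, subject only to the relation $\sum_i\gamma_i=0$. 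Hence $PT\big(H_1(S^\ast;\bZ)\big)$ is the subgroup of $\SO(2,\bR)$ generated by $PT(\gamma_1),\dots,PT(\gamma_V)$, and it suffices to evaluate each $PT(\gamma_i)$.

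The one genuinely geometric step is to show that $PT(\gamma_i)$ is rotation by the cone angle $\theta_i$ at $v_i$ (that is, by the vertex angle), and this is where conventions must be tracked with care. I would use the local cone model: a punctured neighborhood of $v_i$ is isometric to $(0,\infty)\times(\bR/\theta_i\bZ)$ with metric $dr^2+r^2\,d\phi^2$ and developing map $(r,\phi)\mapsto re^{i\phi}$. Traversing $\gamma_i$ corresponds to the deck transformation $\phi\mapsto\phi+\theta_i$, under which the developing map is post-composed with multiplication by $e^{i\theta_i}$; the rotational part of this holonomy is $PT(\gamma_i)$, namely rotation by $\theta_i$. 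The value is well defined only modulo $2\pi$, but that is exactly what we need, and it passes the sanity check that a cone angle in $2\pi\bZ$ (a regular point, or a translation-surface singularity) contributes trivial holonomy. This is the step I expect to be the main obstacle, since it requires pinning down orientation and normalization conventions; everything after it is bookkeeping.

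With this in hand the remaining assertions are elementary computations in $\bR/2\pi\bZ$. For the equivalence: if $P$ is rational then every $\theta_i\in 2\pi\bQ$, so each $PT(\gamma_i)$ has finite order and the finitely generated subgroup they span in the abelian group $\SO(2,\bR)$ is finite; conversely, if the image is finite then each generator $PT(\gamma_i)$ has finite order, forcing $\theta_i\in 2\pi\bQ$ and hence $P$ rational. For the explicit generator, rescale so that $\SO(2,\bR)\cong\bR/\bZ$ and write $\theta_i=\frac{p_i}{q_i}2\pi$ in lowest terms, so $PT(\gamma_i)$ becomes $\frac{p_i}{q_i}\in\bR/\bZ$. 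Since $\gcd(p_i,q_i)=1$ this element generates $\frac{1}{q_i}\bZ/\bZ$, and so the whole image equals $\big\langle \frac{1}{q_1},\dots,\frac{1}{q_V}\big\rangle$. I would finish with the standard fact that this subgroup is $\frac{1}{k}\bZ/\bZ$ for $k=\mathrm{lcm}(q_i)$: the inclusion $\subseteq$ holds because $q_i\mid k$, and the reverse inclusion follows from $\gcd_i(k/q_i)=1$ (a common prime factor of all $k/q_i$ would contradict $k=\mathrm{lcm}(q_i)$) together with Bézout, which produces $\frac{1}{k}$ as an integer combination of the $\frac{1}{q_i}$. Translating back to rotations, the image is generated by rotation by $\frac{2\pi}{k}$, as claimed.
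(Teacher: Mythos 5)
Your proof is correct and follows essentially the same route as the paper's: the paper's (much terser) proof likewise observes that $H_1(P^\ast;\bZ)$ is generated by loops around the cone singularities and that $PT$ sends each such loop to rotation by the corresponding vertex angle. You have simply filled in the details the paper leaves implicit — the local holonomy computation via the developing map and the elementary arithmetic in $\bR/2\pi\bZ$ identifying the image as the cyclic group generated by rotation by $\frac{2\pi}{k}$.
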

\begin{proof}
Since $P$ is a $2$-sphere, $H_1(P^*; \bZ)$ is generated by loops around the cone singularities,
and the image of each such loop under $PT$ gives rotation by the angle at the surrounded vertex. 
\end{proof}
We say that a cone surface is {\em rational} if $PT$ has finite image. From the above, this extends the definition of a rational polyhedron. Also the unfolding of a cone surface is a finite cover if and only if the cone surface is rational.

\begin{figure}
\centerline{\includegraphics[width=6in]{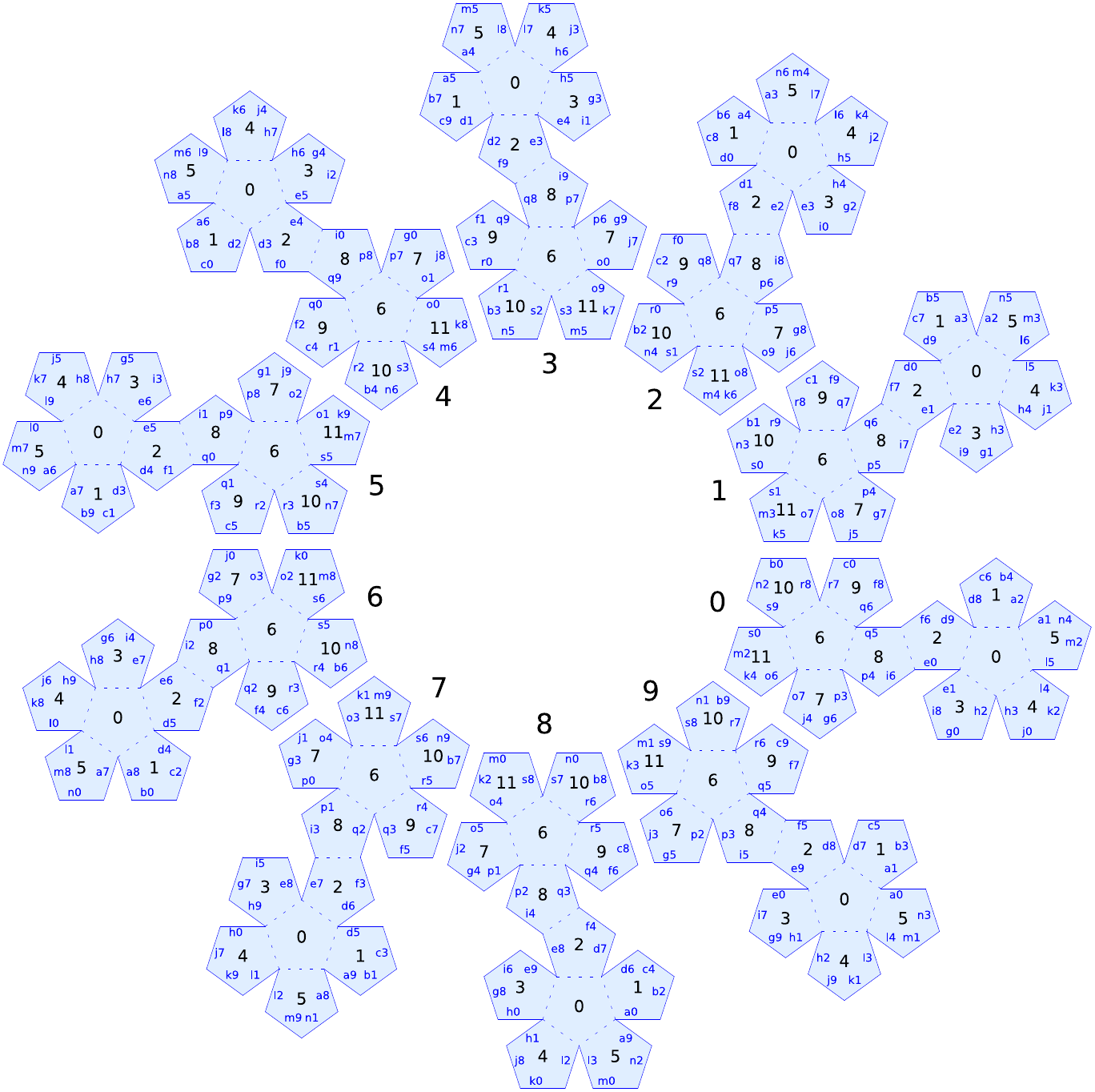}}
\caption{The unfolding of the dodecahedron. Matching labels indicate edge gluings. Letters indicate the covering map to the dodecahedron as in Figure \ref{fig:dodecahedron}.  The numbering on the pentagons and sheets is used in Example \ref{ex:dodecahedron}.}
\label{fig:unfolding}
\end{figure}

\subsection{Complex analysis and $k$-differentials}\label{sect:complex}

\subsubsection*{$k$-differentials}

Let $k$ be a positive integer and $X$ be a Riemann surface. A {meromorphic \em $k$-differential} on $X$ is a differential $m$ on $X$ which can be expressed in local coordinates in the form $m=f(z)~(dz)^k$ where $f$ is meromorphic. 
While $f$ is not globally defined, the zeros and poles of $m$ are. We restrict $m$ to have poles of order larger than $-k$, which ensures that the surface has finite area in the measure determined by the differential. We will also assume that our differentials are not identically zero.

Integrating $\sqrt[k]{m}$ gives charts to the plane so that the transition functions are given by orientation preserving isometries whose rotational components have finite order dividing $k$. This endows $X$ with a Euclidean cone structure so that parallel transport around any loop induces a rotation by a multiple of $\frac{2 \pi}{k}$. In particular, a zero or pole of order $j>-k$ corresponds to a cone point of cone angle $\frac{2\pi(j+k)}{k}$. 

\subsubsection*{Translation surfaces}

Let $S$ be a translation surface. Pulling back the complex structure from the complex plane $\bC$ along the charts, $S$ becomes a Riemann surface $X$; pulling back the $1$-form $d z$ on $\bC$ yields a non-trivial $1$-differential (or {\em holomorphic $1$-form}) $\omega$ on $X$. Integrating the $1$-form gives an atlas of charts determining $S$, so an equivalent definition of a translation surface is a triple $(X, \omega, \Sigma)$ consisting of a Riemann surface $X$, a holomorphic $1$-form $\omega$ on $X$, and a finite subset $\Sigma$ of $X$ containing the zeros of $\omega$.

From Proposition \ref{prop:same cone surfaces}, collections of translation surfaces which are the same as cone surfaces have the form $\{(X,e^{i \theta} \omega,\Sigma):~0 \leq \theta< 2\pi\}$, i.e., such surfaces differ by rotation.

If $(X,\omega)$ is a translation surface and $\gamma:[0,1] \to X$ is a path, the {\em holonomy} of $\gamma$ is 
$$\hol(\gamma)=\int_\gamma~\omega.$$
This notion of holonomy induces a well-defined map $\hol:H_1(X, \Sigma; \bZ) \to \bC$, the image of which is a subgroup of $\bC$ known as the {\em (group of) relative periods}
of the translation surface. A surface is called {\em arithmetic}
or {\em square-tiled} if the relative
periods form a discrete subgroup of $\bC$ (and thus are isomorphic as a group to $\bZ^2$).

\subsubsection*{Rational cone surfaces}

Now let $S$ be a rational cone surface with singular set $\Sigma$. Let $k \geq 1$ be the size of the the image of $PT$. 
Let $\tilde S$ be its unfolding and $\tilde \Sigma$ be the preimage of $\Sigma$ under the covering map.

As above $\tilde S$ determines a family $\{(\tilde X,e^{i \theta} \omega,\tilde \Sigma)\}$. Fix one choice of $\omega$.
Let $\Delta$ denote the deck group of the branched cover $\tilde S \to S$. If $h \in \Delta$, then $h$ acts in local coordinates 
on the translation surface $(\tilde X,\omega)$ preserving $\tilde \Sigma$, so $h$ is an automorphism of the Riemann surface $\tilde X$ and 
$h_\ast(\omega)=e^{\frac{j}{k} 2 \pi i} \omega$. The $k$-th power $\omega^k$ is thus $\Delta$-invariant.  Identifying $S$ with $\tilde S/\Delta$ determines a Riemann surface $X=\tilde X/\Delta$ and a $k$-differential
$m=\pi_\ast(\omega^k)$. The singular set satisfies $\Sigma=\pi(\tilde \Sigma)$.
In analogy to the case of translation surfaces, $S$ corresponds to the family 
$\{(X,e^{i \theta}m, \Sigma)\}$. Given $(X, m, \Sigma)$ we can recover $S$ by integrating $\sqrt[k]{m}$
to obtain local coordinate charts.

\subsubsection*{Strata}

Fix $k \geq 1$ and $g \geq 0$ and let $\alpha=(m_1, \ldots, m_n)$ be a tuple of integers larger than $-k$ which sum to $k(2g-2)$. 
The {\em stratum} $\cH_k(\alpha)$ is the collection of equivalence classes of $(X,m,\Sigma)$ where $X$ is a compact Riemann surface of genus $g$, $m$ is a meromorphic $k$-differential on $X$, and $\Sigma=(x_1, \ldots, x_n)$ is a collection of points on $X$ containing the zeros and poles of $m$
and satisfying the condition that the order of $x_i$ is $m_i$ for $i \in \{1, \ldots, n\}$. Equivalence is given by biholomorphisms mapping differentials and singular sets to each other. Such biholomorphisms are allowed to permute the singlular points. The fact that $\sum m_i=k(2g-2)$ is a consequence of the orbifold Gauss-Bonnet formula (see, e.g., \cite[Proposition 7.9]{FarbMargalit11}). See \cite{BCGGM} for more information on the structure of strata of $k$-differentials.

The $k$-differentials for the Platonic solids (where the underlying surface is $\mathbb C P^1$) and their canonical $k$-covers lie in the strata listed in Table \ref{PlatSolidsTable} by the following elementary result. Note that we commonly use the shorthand exponential notation for strata, e.g. $\cH(4,2,2) = \cH(4,2^2)$.

\begin{lemma}
\label{lem:strata computation}
Let $m$ be a $k$-differential on $\mathbb{C} P^1$ with no zeros and $2k$-simple poles. Then the canonical $k$-cover lies in the stratum $\cH\big((k-2)^{2k}\big)$, and has genus $(k-1)^2$. 
\end{lemma}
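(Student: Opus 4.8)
The plan is to realize the canonical $k$-cover explicitly as a cyclic branched cover of $\mathbb{C}P^1$ of degree $k$, compute its genus by Riemann--Hurwitz, and then read off the stratum by a local computation of the pulled-back abelian differential at the branch points. First I would set up the monodromy. Write $\Sigma=\{p_1,\dots,p_{2k}\}$ for the $2k$ simple poles, so that $H_1(\mathbb{C}P^1\smallsetminus\Sigma;\bZ)$ is generated by small loops $\gamma_i$ about the $p_i$ subject to the single relation $\sum_i \gamma_i=0$. By the discussion of $k$-differentials in \S\ref{sect:complex}, a pole of order $-1$ has cone angle $\tfrac{2\pi(k-1)}{k}$, so parallel transport around $\gamma_i$ is rotation by $-\tfrac{2\pi}{k}$; equivalently, the associated homomorphism $\rho\colon H_1(\mathbb{C}P^1\smallsetminus\Sigma;\bZ)\to \bZ/k\bZ$ sends each $\gamma_i$ to $-1$. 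This $\rho$ is well defined since $\rho\big(\sum_i\gamma_i\big)=-2k\equiv 0 \pmod{k}$, and it is surjective because $-1$ generates $\bZ/k\bZ$. Hence, by the description of the unfolding in \S\ref{sect:unfolding}, the canonical $k$-cover $\pi\colon\tilde X\to\mathbb{C}P^1$ is the connected cyclic cover of degree $k$ determined by $\ker\rho$, carrying a single-valued abelian differential $\omega$ with $\omega^k=\pi^{*}m$.

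Next I would determine the ramification and apply Riemann--Hurwitz. Because $\rho(\gamma_i)=-1$ is a generator of $\bZ/k\bZ$, each pole $p_i$ is totally ramified: it has exactly one preimage $\tilde p_i$, with ramification index $k$. As $m$ has no zeros, these are the only branch points, so Riemann--Hurwitz reads $2-2\tilde g = 2k-2k(k-1)$, which gives $\tilde g=(k-1)^2$.

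Finally I would compute the stratum. Fixing a preimage $\tilde p_i$, choose a local coordinate $z$ centered at $p_i$ and write $m=z^{-1}u(z)\,(dz)^k$ with $u(0)\neq 0$. A local coordinate $w$ at $\tilde p_i$ satisfies $z=w^{k}$, so $\pi^{*}m=w^{-k}u(w^{k})\,(k\,w^{k-1}\,dw)^{k}=k^{k}u(w^{k})\,w^{k(k-2)}\,(dw)^{k}$. Extracting the $k$-th root, $\omega$ has the local form $(\text{unit})\cdot w^{k-2}\,dw$, i.e.\ a zero of order $k-2$ at each $\tilde p_i$. Away from the $\tilde p_i$ the differential $m$ is holomorphic and non-vanishing, so $\omega$ has no further zeros. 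Thus $\omega$ has precisely $2k$ zeros, each of order $k-2$, placing $(\tilde X,\omega)$ in $\cH\big((k-2)^{2k}\big)$; as a consistency check, $2k(k-2)=2\tilde g-2$.

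I do not expect a genuine obstacle here: the whole argument hinges on the single structural fact that $-1$ generates $\bZ/k\bZ$, which guarantees simultaneously that the cover is connected of full degree $k$ (so the $k$-differential is automatically \emph{primitive}, not a proper power of a lower-order differential) and that every pole is totally ramified. The only point demanding care is the local computation at the branch points, where one must use the totally ramified model $z=w^{k}$ to see that the simple poles of $m$ become zeros of $\omega$ of order exactly $k-2$.
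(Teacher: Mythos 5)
Your proposal is correct, and its core mechanism is the same as the paper's: the canonical $k$-cover is totally ramified over the $2k$ simple poles, and each simple pole becomes a zero of order $k-2$ upstairs. The packaging, however, differs enough to be worth noting. The paper's proof is a three-line flat-geometry count: the cone angle at a simple pole is $2\pi\frac{k-1}{k}$, the ramified $k$-cover multiplies this by $k$ to give cone angle $2\pi(k-1)$, i.e.\ a zero of order $k-2$ of the abelian differential, and the genus is then read off from $\sum m_i = 2g-2$. You run the same computation complex-analytically (local model $z=w^k$, pull back $z^{-1}u(z)\,(dz)^k$, extract the $k$-th root), and you obtain the genus independently via Riemann--Hurwitz, using $2k(k-2)=2\tilde g-2$ only as a consistency check. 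More substantively, you supply a step the paper leaves implicit: the monodromy argument that $\rho(\gamma_i)=-1$ generates $\bZ/k\bZ$, so the cover is connected of full degree $k$ and every pole has a \emph{single}, totally ramified preimage. This is exactly what guarantees that the zero count is $2k$ rather than some larger number of lower-order zeros, so your version is more self-contained; the paper's version buys brevity by asserting the ramification structure of the canonical cover outright.
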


\begin{proof} The cone angle of the flat metric at simple pole of a $k$-differential is $2\pi\frac{k-1}{k}$. Taking the $k$-cover ramified over these $2k$ points, we obtain a surface with $2k$ points with cone angle $2\pi(k-1)$, that is a holomorphic $1$-form with $2k$ zeros of order $(k-2)$. The genus of the surface satisfies $2g- 2 = 2k(k-2)$, so $$g = k(k-2) + 1 = (k-1)^2.$$
\end{proof}

\begin{table}
  \centering
\begin{tabular}{c|c|c|c} 
Polyhedron & Stratum of & Stratum of & Genus of \\
 & $k$-differentials & unfolding & unfolding \\
 \hline
 Tetrahedron & $\cH_2(-1^4)$ & $\cH_1(0^4)$ & $1$ \\ 
 Octahedron & $\cH_3(-1^6)$ & $\cH_1(1^6)$ & $4$ \\ 
 Cube & $\cH_4(-1^8)$ & $\cH_1(2^8)$ & $9$ \\ 
 Icosahedron & $\cH_6(-1^{12})$ & $\cH_1(4^{12})$ & $25$ \\ 
 Dodecahedron & $\cH_{10}(-1^{20})$ & $\cH_1(8^{20})$ & $81$ \\ 
\end{tabular}
\caption{Strata of the Platonic solids and their unfoldings.
}
\label{PlatSolidsTable}
\end{table}

\subsection{Affine symmetries and covers}
\label{sect:additional}

\subsubsection*{Veech groups}
The {\em Veech group} $V(S)$ of a translation surface $S$ is the set of derivatives of orientation preserving affine automorphisms of the surface,
$$V(S) = \{A \in \SL(2,\bR):~A(S)=S\}=D\big(\Aff(S)\big).$$
The {\em full Veech group} $V_{\pm}(S)$ of $S$ is the set of all derivatives of affine automorphisms of the surface,
$$V_{\pm}(S) = \{A \in \GL(2,\bR):~A(S) = S\}=D\big(\Aff_\pm(S)\big),$$
which is contained in the subgroup of matrices with determinant $\pm 1$.
Note that these definitions implicitly depends on selecting a singular set $\Sigma$,
and if $S=(X,\omega)$ we will by default take $\Sigma$ to be the zeros of $\omega$.

We say $S$ is a {\em lattice surface} (also known in the literature as a \emph{Veech surface}) if $V(S)$ is a lattice subgroup of $\splin$. 
If $S$ is a lattice surface, then the $\splin$-orbit of $S$ projected from the stratum into the moduli space $\mathcal M_g$ of genus $g$ Riemann surfaces is called
a {\em Teichm\"uller curve}. When $\mathcal M_g$ is equipped with the Teichm\"uller metric, the Teichm\"uller curve
is totally geodesic and isometric to 
$$PSO(2,\bR) \backslash P\SL(2,\bR)/PV(S)$$
endowed with its hyperbolic metric coming from identifying the hyperbolic plane with $PSO(2,\bR) \backslash P\SL(2,\bR)$.
Here $PV(S)$ denotes the image of $V(S)$ in $P\SL(2,\bR)$. We say a lattice surface $S$ is \emph{arithmetic} if $V(S)$ is commensurable to $SL(2, \bZ)$. The \emph{Veech dichotomy} is a celebrated result about lattice surfaces:
\begin{theorem}[Veech Dichotomy \cite{Veech89}]\label{theorem:veech}
A lattice surface is {\em completely periodic}: the existence of a saddle connection
(or a closed geodesic) implies that the surface decomposes into parallel cylinders. Also, any infinite trajectory is dense and equidistributed. Furthermore a lattice surface $S$ decomposes into cylinders parallel to a non-zero vector ${\mathbf v}$ if and only if ${\mathbf v}$ is stabilized by a parabolic element in $V(S)$.
\end{theorem}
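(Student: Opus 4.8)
The plan is to convert the per-direction dynamics of the straight-line flow into the geometry of the finite-area orbifold $\bH/PV(S)$, leaning on two standard external inputs: Masur's criterion (if the straight-line flow in a direction $\mathbf{v}$ fails to be uniquely ergodic, then the Teichm\"uller geodesic ray it determines is divergent in the moduli space $\cM_g$, i.e.\ leaves every compact set), and the compactness criterion for strata (the set of unit-area surfaces whose shortest saddle connection has length $\geq \epsilon$ is compact, so a family with saddle connections shrinking to $0$ is divergent). Identifying unit directions with points of $\partial\bH$ and straight-line renormalization with the geodesic flow, the lattice $V(S)$ acts on $\bH$ with $\bH/PV(S)$ of finite area, and I would use the elementary fact that in such an orbifold a geodesic ray either returns to a fixed compact set infinitely often (\emph{recurrent}) or eventually escapes into a cusp, whose endpoint is then a parabolic fixed point of $V(S)$ — equivalently, $\mathbf{v}$ is stabilized by a parabolic element of $V(S)$.

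The crux, and the place where the rigid affine structure rather than soft ergodic theory does the work, is the following lemma, which I expect to be the main obstacle: if $\phi \in \Aff(S)$ has parabolic derivative, then $S$ decomposes into cylinders in the direction $\mathbf{v}$ fixed by $D\phi$. First I would rotate so that $\mathbf{v}$ is horizontal, so that $D\phi = \begin{pmatrix} 1 & s \\ 0 & 1 \end{pmatrix}$ fixes every horizontal vector and preserves horizontal length and orientation. Since $\phi$ permutes the finitely many singularities and the finitely many horizontal separatrices emanating from them, a power $\psi=\phi^n$ fixes each singularity and each separatrix setwise. Restricted to a separatrix, $\psi$ is then an orientation- and length-preserving self-map of a horizontal ray fixing its initial singularity, hence the identity on that ray. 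If some separatrix were infinite, it would be dense in a minimal component $M$, forcing $\psi$ to be the identity on $M$ and contradicting $D\psi \neq I$. Thus every horizontal separatrix is a saddle connection, the horizontal foliation has no minimal component, and $S$ is a union of horizontal cylinders.

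With the lemma in hand the theorem assembles cleanly. If a saddle connection (or a closed geodesic, which lies in a maximal cylinder bounded by saddle connections) exists in direction $\mathbf{v}$, then along the Teichm\"uller geodesic that contracts the $\mathbf{v}$-direction these saddle connections shrink to $0$; by the compactness criterion the ray is divergent, hence escapes to a cusp, so $\mathbf{v}$ is stabilized by a parabolic element of $V(S)$, and the lemma produces the cylinder decomposition. This establishes complete periodicity. Conversely, in any direction $\mathbf{v}$ that is \emph{not} a parabolic fixed point, the associated ray is recurrent; since the Teichm\"uller curve immerses properly into $\cM_g$, the ray stays in a compact part of $\cM_g$, so by the contrapositive of Masur's criterion the flow is uniquely ergodic, whence every infinite trajectory is dense and equidistributed. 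For the final equivalence, one implication is exactly the lemma, while for the other a cylinder decomposition in direction $\mathbf{v}$ exhibits boundary saddle connections, which by the shrinking-systole argument above force $\mathbf{v}$ to be a parabolic fixed point, i.e.\ stabilized by a parabolic element of $V(S)$.
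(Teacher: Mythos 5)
The paper does not actually prove this statement --- it is quoted as Veech's theorem with a citation to \cite{Veech89}, and the surrounding text uses it as a black box --- so there is no internal proof to compare against; your proposal must be judged against the standard literature. Judged that way, it is essentially correct and follows what has become the standard modern proof (as in the expositions of Masur--Tabachnikov and Hubert--Schmidt), which is genuinely different in packaging from Veech's original 1989 argument: Veech could not have used Masur's criterion (which postdates his paper), whereas your route gets unique ergodicity in non-parabolic directions cheaply from it, at the price of importing a deeper external input. Your key lemma --- parabolic derivative implies cylinder decomposition --- is exactly Veech's classical argument, and your proof of it (pass to a power $\psi$ fixing every singularity and every horizontal separatrix, note $\psi$ is the identity on each separatrix, and rule out infinite separatrices because $\psi$ would then be the identity on a set with nonempty interior while $D\psi \neq I$) is sound. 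Three points deserve tightening. First, your final step ``all separatrices are saddle connections, hence no minimal components, hence cylinders'' silently invokes the structure theorem for directional foliations (every minimal component contains an infinite separatrix; equivalently, cutting along the finitely many saddle connections leaves only annular pieces); this is standard but is a real step and should be cited, just as you cite Masur's criterion. Second, in the unique ergodicity direction you say the ray ``stays in a compact part of $\cM_g$''; recurrence in $\bH/PV(S)$ only gives that the ray \emph{returns to} a fixed compact set infinitely often (it may still make cusp excursions), which is exactly what the contrapositive of Masur's criterion needs --- and for this, continuity of the map $\bH/PV(S) \to \cM_g$ suffices, so you do not even need properness of the immersion. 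Third, the identification of $\bR P^1$ with $\partial \bH$ must be chosen equivariantly so that the stabilizer of the boundary point of the contracting ray corresponds to the stabilizer of the line $\bR{\mathbf v}$ in $V(S)$, and one should note that a parabolic stabilizing the line $\bR {\mathbf v}$ can be replaced by its square to stabilize the vector ${\mathbf v}$ itself; these are routine but worth a sentence each in a written-out proof.
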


\subsubsection*{Coverings and primitivity}
A {\em covering} from $S=(X,\omega)$ and $S'=(Y,\eta)$ is a branched covering map $\pi:S \to S'$ such that $\pi^\ast(\omega)=\eta$.
Following \cite{Moller06}, we say a translation surface $S$ is {\em imprimitive} if there is another translation surface $S'$
of lower genus and a covering $\pi:S \to S'$. 
We call $S$ {\em primitive} if it is not imprimitive. 
Note that the notion of primitivity of $S$ is independent of the choice of the singularity set $\Sigma$ of $S$. From \cite[Theorem 2.6]{Moller06} or \cite[Theorem 2.1]{McM06} we have:

\begin{theorem}
\label{thm:unique}
Any translation surface $S$ covers a primitive translation surface.
If the primitive surface being covered has genus larger than one, then the primitive surface is uniquely determined by $S$.
\end{theorem}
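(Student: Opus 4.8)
The plan is to treat the two assertions separately. For \textbf{existence} I would argue by descent on the genus. The first point to record is that a composition of translation coverings is again a translation covering: if $\pi\colon(X,\omega)\to(Y,\eta)$ and $\pi'\colon(Y,\eta)\to(Z,\zeta)$ satisfy the defining pullback relations, then $\pi'\circ\pi$ does as well, by functoriality of pullback of forms. Now start with $S$. If $S$ is primitive we are done, since $S$ covers itself via the identity. Otherwise, by definition $S$ covers a translation surface $S_1$ of strictly smaller genus; iterating produces a chain $S\to S_1\to S_2\to\cdots$ along which the genus strictly decreases. Since the genus is a nonnegative integer and every translation surface has genus at least one (as $\bP^1$ carries no nonzero holomorphic $1$-form), the chain terminates, and its last term is by construction a primitive surface covered by $S$.

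For \textbf{uniqueness} the strategy is to encode coverings cohomologically and extract a canonical minimal object. A translation covering $\pi\colon(X,\omega)\to(Y,\eta)$ induces an injection $\pi^\ast\colon H^1(Y;\bQ)\hookrightarrow H^1(X;\bQ)$ whose image $V_\pi$ is a rational sub-Hodge structure (equivalently, up to isogeny, an abelian subvariety of the Jacobian of $X$) on which the intersection-form polarization restricts nondegenerately, and whose complexification contains the class of $\omega=\pi^\ast\eta$ together with its parts $[\mathrm{Re}\,\omega]$ and $[\mathrm{Im}\,\omega]$. I would then consider the collection of \emph{all} rational sub-Hodge structures $V\subseteq H^1(X;\bQ)$ whose complexification contains $\omega$. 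This collection is closed under intersection, since the intersection of two sub-Hodge structures is again one; hence it has a unique minimal element $M$, which still contains $\omega$ and is an invariant of the pair $(X,\omega)$ alone. Every covering image $V_\pi$ belongs to this collection, so $M\subseteq V_\pi$ for all $\pi$.

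The final and hardest step is to pass from $M$ back to a genuine primitive translation surface and to prove it is unique. Here I would associate to $M$ the corresponding abelian subvariety $B\subseteq \mathrm{Jac}(X)$, together with the induced map of $X$ into $B$, and argue that quotienting $X$ by the relation that identifies points with the same image yields a curve $Y_0$ carrying a form $\eta_0$ with $\pi_0^\ast\eta_0=\omega$. Combined with the existence part, a primitive cover $Y_0$ has image $V_{\pi_0}$ that cannot be properly shrunk within the collection above (a proper sub-Hodge structure containing $\omega$ would, via the same reconstruction, yield a further factorization through a lower-genus surface, contradicting primitivity), so $V_{\pi_0}=M$; since $M$ is canonical, any two primitive covers share the same image, and the reconstruction then identifies them compatibly with the forms. \emph{This is precisely where the genus hypothesis enters, and I expect it to be the main obstacle}: when $g(Y_0)\ge 2$ the curve is recovered from the polarized abelian variety $B$ by Torelli-type rigidity and the indecomposability forced by minimality, whereas in genus $1$ an abelian variety can contain many mutually non-isomorphic elliptic factors related by isogenies, so the minimal quotient torus need not be unique. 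The formalities of existence and of constructing $M$ are comparatively routine; the substance of the argument, namely the equivalence between translation coverings onto targets of genus at least two and these minimal sub-Hodge structures, is what the cited results of \cite{Moller06, McM06} supply.
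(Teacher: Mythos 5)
First, a structural point: the paper does not prove this theorem at all --- it is imported verbatim from \cite[Theorem 2.6]{Moller06} and \cite[Theorem 2.1]{McM06}, so there is no internal proof to compare yours against; your proposal has to be judged on its own merits. The existence half of your argument is correct and is the standard one: translation coverings compose, the genus strictly decreases along a chain of imprimitive quotients, and the chain terminates because every translation surface carries a nonzero holomorphic $1$-form and hence has genus at least $1$.

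The uniqueness half, however, has two genuine gaps, and they sit exactly at the two places you flag as "the substance." (i) Your reconstruction step does not do what you need: mapping $X$ into the abelian variety $B$ attached to the minimal sub-Hodge structure $M$ and normalizing the image gives a translation covering $\pi\colon X \to Z$ with $\pi^\ast H^1(Z;\mathbb{Q}) \supseteq M$, and this containment can be \emph{strict}, because the image curve can have genus much larger than $\dim B$ (it can even be birational to $X$). Consequently your claim that a primitive cover $Y_0$ must satisfy $V_{\pi_0}=M$ --- "a proper sub-Hodge structure containing $\omega$ would yield a factorization through a lower-genus surface" --- does not follow: the reconstructed quotient need not have lower genus, and a primitive form can perfectly well have $[\omega]$ inside a proper sub-Hodge structure of $H^1$. (ii) Even granting $V_{\pi_1}=V_{\pi_2}$ for two primitive covers, "Torelli-type rigidity" cannot identify $Y_1$ with $Y_2$: a \emph{rational} sub-Hodge structure determines the Jacobians only up to isogeny, the polarization it inherits from $H^1(X)$ is the principal one rescaled by $\deg\pi_i$, and isogenous Jacobians need not come from isomorphic curves; moreover you would still have to match the covering maps and the forms, not just the curves. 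The mechanism that actually makes the cited proofs work, and which is absent from your sketch, is a factorization statement rather than a Torelli statement: take $V=V_{\pi_1}\cap V_{\pi_2}$ (or $M$), let $B$ be the corresponding quotient of $\mathrm{Jac}(X)$, and use functoriality of the Abel--Jacobi map to see that $X\to B$ factors through \emph{both} $\pi_1$ and $\pi_2$. The normalized image curve $(Z,\zeta)$ is then a common translation quotient of $(Y_1,\eta_1)$ and $(Y_2,\eta_2)$; since $\zeta\neq 0$ forces $g(Z)\geq 1$ and primitivity of $Y_i$ forbids $g(Z)<g(Y_i)$, Riemann--Hurwitz gives $g(Z)=g(Y_i)$ and degree one, so $Y_1\cong Z\cong Y_2$ compatibly with the coverings and forms. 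Note that this is also where the hypothesis $g\geq 2$ genuinely enters: for genus-one targets a degree-one conclusion fails because same-genus coverings (isogenies) abound --- not because of any failure of Torelli, as your sketch suggests.
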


In case $S$ is imprimitive, the Veech group of the corresponding primitive surface has bearing on the Veech group of $S$:

\begin{corollary}
\label{cor:subgroup}
Suppose $S$ is a translation surface with singular set $\Sigma$, and suppose
$S$ covers a primitive translation surface $(Y,\eta)$ of genus larger than one.
Then $V_{\pm}(S) \subset V_{\pm}(Y,\eta)$.
\end{corollary}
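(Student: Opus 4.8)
The plan is to show that any orientation-preserving-or-reversing affine automorphism of $S$ descends to (or is compatible with) an affine automorphism of the primitive surface $(Y,\eta)$, so that its derivative lands in $V_{\pm}(Y,\eta)$. The key structural input is Theorem~\ref{thm:unique}: since $S$ covers the primitive surface $(Y,\eta)$ of genus greater than one, this primitive surface is \emph{uniquely} determined by $S$. Uniqueness is exactly the leverage I want, because it means the covering $\pi \colon S \to (Y,\eta)$ is canonically attached to $S$ and must therefore be respected by every symmetry of $S$.

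First I would fix an affine automorphism $h \in \Aff_\pm(S)$ with derivative $A = Dh \in \GL(2,\bR)$. Applying $A$ to the covering data, I observe that $h$ transports the covering $\pi \colon S \to (Y,\eta)$ to a new covering $\pi \circ h^{-1} \colon S \to (Y,\eta)$; equivalently, $S = A(S)$ still covers the surface $A(Y,\eta) = (Y, A\cdot\eta)$, where $A\cdot\eta$ denotes the $1$-form obtained by pushing $\eta$ through the linear action. The point is that $A(Y,\eta)$ is again a primitive translation surface of the same genus (primitivity and genus are $\GL(2,\bR)$-invariant), and it is covered by $A(S) = S$. By the uniqueness clause of Theorem~\ref{thm:unique}, the primitive surface covered by $S$ is unique, so $A(Y,\eta)$ must be the \emph{same} translation surface as $(Y,\eta)$. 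This forces $A \in V_\pm(Y,\eta)$, which is precisely the desired conclusion.

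The step I expect to be the main obstacle is making the uniqueness argument fully rigorous at the level of marked translation surfaces rather than just isomorphism classes. Theorem~\ref{thm:unique} gives uniqueness of the primitive surface up to isomorphism, but to conclude $A(Y,\eta) = (Y,\eta)$ as an \emph{equality} of translation surfaces (and hence $A \in V_\pm(Y,\eta)$), I need to track how the automorphism $h$ interacts with the covering maps and identify the isomorphism of primitive surfaces with an affine automorphism realizing $A$. Concretely, the isomorphism produced by uniqueness between $A(Y,\eta)$ and $(Y,\eta)$ should be an affine map whose derivative is the identity once the base points of the coverings are matched up; the care lies in verifying that the covering $\pi$ and its $h$-twist $\pi \circ h^{-1}$ induce the same primitive quotient, so the comparison isomorphism has derivative $A$ and thus witnesses $A \in V_\pm(Y,\eta)$.

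A cleaner route, which I would pursue in parallel, is to invoke the functoriality of the primitive-surface construction directly: the assignment $S \mapsto (Y,\eta)$ is canonical by uniqueness, so it intertwines the $\GL(2,\bR)$-action, giving a homomorphism $\Aff_\pm(S) \to \Aff_\pm(Y,\eta)$ that commutes with taking derivatives. Under this homomorphism the image of $h$ has the same derivative $A$, yielding $A = Dh \in D(\Aff_\pm(Y,\eta)) = V_\pm(Y,\eta)$. This packages the argument above and sidesteps the explicit base-point bookkeeping, at the cost of having to justify the functoriality statement from the uniqueness in Theorem~\ref{thm:unique}; that justification is the real content and where I would spend the most care.
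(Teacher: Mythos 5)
Your proposal is correct and takes essentially the same route as the paper: apply $A = Dh$ to the covering, observe that $A(S)=S$ covers the primitive surface $A(Y,\eta)$ (primitivity being $\GL(2,\bR)$-invariant), and invoke the uniqueness in Theorem~\ref{thm:unique} to conclude $A(Y,\eta)=(Y,\eta)$, hence $A \in V_{\pm}(Y,\eta)$. The base-point/isomorphism worry you raise dissolves because the paper's notion of equality of translation surfaces is already the equivalence under which the Veech group is defined, so ``$A(Y,\eta)=(Y,\eta)$'' in that sense is precisely the statement $A \in V_{\pm}(Y,\eta)$.
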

\begin{proof}
Fix $S$ and $(Y,\eta)$ as above. Letting $A \in \GL(2,\bR)$, observe that $A(S)$ covers $A(Y,\eta)$ and
$A(Y,\eta)$ is primitive since $(Y,\eta)$ is primitive. If $A(S)=S$, then we must have $A(Y,\eta)=(Y,\eta)$ by the uniqueness claimed in Theorem \ref{thm:unique}. This says $A \in V_{\pm}(S)$ implies $A \in V_{\pm}(Y,\eta)$ as desired.
\end{proof}

In particular, we see that if a lattice surface $S$ covers a primitive translation surface 
$P$ of genus larger than one, then $P$ is also a lattice surface. A parallel observation holds in the case when $S$ is square-tiled, see, for example Hubert-Schmidt~\cite{HS01}.

\begin{theorem}
\label{thm:square tiled coverings}
Let $S$ be a square-tiled translation surface. Let $P$ be the torus with one marked point given by $\bC/\Lambda$ where $\Lambda$ is the (discrete) group of relative periods of $S$. Then there is a unique covering $S \to P$ sending the singular set of $S$ to the marked point of $P$. Furthermore, $V_\pm(S)$ is a finite index subgroup of $V_\pm(P)$.


\end{theorem}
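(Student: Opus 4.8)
The plan is to build the covering by integrating the one-form, to get uniqueness from the fact that two local primitives of $\omega$ differ by a constant, and then to handle the two Veech-group assertions separately: the inclusion $V_\pm(S) \subseteq V_\pm(P)$ via the action of affine automorphisms on relative homology, and finiteness of the index via an orbit--stabilizer argument over the finite set of degree-$d$ covers of the once-punctured torus. First I would construct $\pi$. Writing $S = (X,\omega)$ and fixing a basepoint $x_0 \in \Sigma$, I set $\pi(x) = \int_{x_0}^x \omega \bmod \Lambda$, the integral taken over any path in $X$. Changing the path alters the integral by an absolute period, which lies in the group $\Lambda$ of relative periods, so $\pi$ is well defined; for $\sigma \in \Sigma$ the integral is a relative period, hence lies in $\Lambda$, so $\pi(\sigma)$ is the marked point. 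Away from $\Sigma$ the map $\pi$ is locally $z \mapsto \int \omega$ followed by $\bC \to \bC/\Lambda$, so it is a local isometry with $\pi^\ast(dz) = \omega$; compactness of $X$ then makes $\pi$ a branched covering of degree $d = \Area(S)/\Area(P)$, branched only over the marked point. For uniqueness, any covering $\pi'$ with $\pi'^\ast(dz) = \omega$ is also locally a primitive of $\omega$, so $\pi - \pi'$ is locally constant, hence constant on the connected surface $X^\ast$; since both maps send a point of $\Sigma$ to the marked point, the constant vanishes and $\pi' = \pi$.

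For the inclusion, let $A \in V_\pm(S)$ be realized by an affine automorphism $h \colon S \to S$ with $Dh = A$. Because $h$ preserves $\Sigma$, the induced map $h_\ast$ is an automorphism of $H_1(X,\Sigma;\bZ)$, and because $Dh = A$ it scales holonomy, $\hol(h_\ast\gamma) = A\,\hol(\gamma)$ under the identification $\bC \cong \bR^2$. Applying this across all of relative homology gives $A\Lambda = \Lambda$. Since an affine automorphism of the marked torus fixes the marked point and so has trivial translational part, $V_\pm(P)$ is exactly the group of matrices preserving $\Lambda$; thus $A \in V_\pm(P)$, and $V_\pm(S)$ is a subgroup of $V_\pm(P)$.

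For finite index, let $\mathcal{T}$ be the set of isomorphism classes of connected degree-$d$ translation coverings $T \to P$ carrying the singular set of $T$ to the marked point. Since the translation structure on $T$ is pulled back from $P$, such coverings are determined by their restrictions to unramified degree-$d$ covers of $P^\ast = P \setminus \{\text{marked point}\}$; as $\pi_1(P^\ast)$ is free of rank two and has finitely many subgroups of index $d$, the set $\mathcal{T}$ is finite. Using the uniqueness proved above, $V_\pm(P)$ acts on $\mathcal{T}$ by $A \cdot [\rho \colon T \to P] = [\rho_A \colon A(T) \to P]$, where $\rho_A$ is the unique covering of $A(T)$, whose relative periods again lie in $A\Lambda = \Lambda$. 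I would then identify the stabilizer of $[\pi \colon S \to P]$ with $V_\pm(S)$: the class $[\rho_A \colon A(S) \to P]$ equals $[\pi \colon S \to P]$ precisely when there is a translation isomorphism $A(S) \to S$, the compatibility with the coverings being automatic by uniqueness, and this is exactly the condition $A(S) = S$, that is, $A \in V_\pm(S)$. Orbit--stabilizer then yields $[V_\pm(P) : V_\pm(S)] \leq |\mathcal{T}| < \infty$.

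The heart of the matter, and the step demanding the most care, is this last one. The construction and uniqueness are formal and the inclusion is a one-line homology computation, but finiteness of the index rests on two less routine ingredients: the topological input that a finitely generated group has only finitely many subgroups of any given finite index, so that $P^\ast$ has finitely many bounded-degree covers; and the precise matching of the covering's stabilizer with $V_\pm(S)$, which depends essentially on the uniqueness of $\pi$. One must also check the ancillary points that isomorphism of translation covers agrees with isomorphism of the underlying topological covers, and that the $V_\pm(P)$-action on $\mathcal{T}$ is well defined.
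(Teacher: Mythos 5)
Your proof is correct, but there is no argument in the paper to measure it against: Theorem \ref{thm:square tiled coverings} is stated without proof, with a pointer to Hubert--Schmidt \cite{HS01}, so your write-up genuinely fills in a missing argument rather than retracing one. It is worth noting how it parallels what the paper \emph{does} prove in the primitive case: Corollary \ref{cor:subgroup} obtains the inclusion of Veech groups from the uniqueness of the covered surface (Theorem \ref{thm:unique}), exactly as you obtain both the inclusion $V_\pm(S)\subseteq V_\pm(P)$ and the identification of the stabilizer from the uniqueness of $\pi$; your integration construction supplies that uniqueness elementarily, and your count of degree-$d$ covers of the once-punctured torus replaces the Gutkin--Judge commensurability theorem \cite{GutkinJudge00}, which is what the paper invokes (Corollary \ref{cor:finite index}) to get finite index in the primitive lattice setting. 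Two caveats should be made explicit in a final version. First, your uniqueness statement is uniqueness among coverings with $\pi'^\ast(dz)=\omega$; this is the correct reading because the paper defines a covering of translation surfaces to be compatible with the $1$-forms, and without that hypothesis uniqueness is simply false (postcompose $\pi$ with $z\mapsto -z$, which fixes the marked point). Second, when $S$ is allowed removable singularities --- and the paper does apply the theorem in that generality, e.g.\ the tetrahedron unfolding lies in $\cH_1(0^4)$ by Table \ref{PlatSolidsTable} --- the fiber $\pi^{-1}(\text{marked point})$ properly contains $\Sigma$, so a class in your $\mathcal{T}$ is not determined by the underlying cover of $P^\ast$ alone: it is the data of that branched covering \emph{together with} a choice of singular set contained in the fiber over the marked point and containing all zeros. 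There are finitely many underlying coverings by your $\pi_1$ argument and at most $2^d$ such choices for each, so $\mathcal{T}$ remains finite and your orbit--stabilizer count is unaffected.
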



\subsubsection*{Translation coverings and Veech groups}
Let $S=(X,\omega)$ and $S' = (X', \omega')$ be translation surfaces with respective singular sets $\Sigma$ and $\Sigma'$. 
A {\em translation covering} is a covering $\pi:S \to S'$ so that all branched points lie in $\Sigma'$
and $\pi^{-1}(\Sigma')=\Sigma$.
If $\pi: S \to S'$ is a translation covering, there is a close relationship between the Veech groups.
Gutkin and Judge \cite[Theorem 4.9]{GutkinJudge00} showed:
\begin{theorem}
If $\pi:S \to S'$ is a translation covering, then the Veech groups $V_\pm(S)$ and $V_\pm(S')$ are {\em commensurate} (i.e.,
$V_\pm(S) \cap V_\pm(S')$ is a finite index subgroup of both $V_\pm(S)$ and $V_\pm(S')$).
In particular, $S$ is a lattice surface if and only if $S'$ is.
\end{theorem}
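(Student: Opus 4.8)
The plan is to reduce to the case of a \emph{regular} (Galois) translation covering and then exploit the fact that a translation surface with nonempty singular set has only finitely many affine automorphisms of trivial derivative. Write $\pi:S\to S'$ with $S=(X,\omega)$, $S'=(X',\omega')$ and let $d$ be the degree; restricting to $S^*=X\smallsetminus\Sigma$ and $(S')^*=X'\smallsetminus\Sigma'$ gives an honest degree-$d$ covering classified by a finite-index subgroup $H\le\pi_1((S')^*)$ up to conjugacy. First I would pass to the normal core $N=\bigcap_g gHg^{-1}$, which still has finite index since $H$ has only finitely many conjugates, and complete the associated cover to a translation surface $\hat S$. By construction $\hat S\to S'$ is regular with finite deck group $\Delta'=\pi_1((S')^*)/N$, and because $N\le H$, the Galois correspondence presents $\hat S\to S$ as a regular covering as well, with deck group $H/N\le\Delta'$. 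Since commensurability of subgroups of $\SL(2,\bR)$ is transitive, it then suffices to prove the theorem for regular coverings and apply it to both $\hat S\to S'$ and $\hat S\to S$.

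So suppose $q:\hat S\to S'$ is a regular translation covering with finite deck group $\Delta$, noting that $\Delta\subseteq\ker\big(D:\Aff_\pm(\hat S)\to V_\pm(\hat S)\big)$ because deck transformations are local isometries with trivial rotational part. For the lifting direction, an affine automorphism $\psi$ of $S'$ (possibly orientation-reversing) lifts to $\hat S$ exactly when $\psi_\ast N=N$ — the usual lifting criterion, where $N$ being normal removes the conjugacy ambiguity — and any such lift $\hat\psi$ is again affine with $D\hat\psi=D\psi$, since $q$ is a local isometry in charts. As $\pi_1((S')^*)$ is finitely generated it has only finitely many subgroups of index $[\pi_1((S')^*):N]$, and $\Aff_\pm(S')$ permutes them; hence the stabilizer $G_0$ of $N$ has finite index in $\Aff_\pm(S')$, so $D(G_0)\subseteq V_\pm(S')\cap V_\pm(\hat S)$ has finite index in $V_\pm(S')$.

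For the descent direction, an affine automorphism $\hat\psi$ of $\hat S$ descends to $S'=\hat S/\Delta$ precisely when it normalizes $\Delta$ inside $\Aff_\pm(\hat S)$, and the descended map is affine with the same derivative. Here the key finiteness enters: the kernel $T(\hat S)=\ker D$ is finite, because an automorphism of trivial derivative is a local translation, so if it fixes a single singular point it is the identity, whence $T(\hat S)$ embeds into the permutations of the finite set $\tilde\Sigma$ (the unmarked-torus case, where $\Sigma=\emptyset$, should be treated separately, e.g. by marking a point or via Theorem \ref{thm:square tiled coverings}). Since $\Delta\le T(\hat S)$ is one of finitely many subgroups of a finite group, its normalizer has finite index in $\Aff_\pm(\hat S)$, and its image under $D$ is a finite-index subgroup of $V_\pm(\hat S)$ contained in $V_\pm(S')\cap V_\pm(\hat S)$. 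I expect this descent step — in particular pinning down the finiteness of $T(\hat S)$ and the torus exception — to be the main obstacle, since the lifting step is essentially the standard covering-space count.

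Combining the two directions shows that $V_\pm(S')\cap V_\pm(\hat S)$ has finite index in both $V_\pm(S')$ and $V_\pm(\hat S)$, so $\hat S$ and $S'$ have commensurate Veech groups; applying the same argument to $\hat S\to S$ and invoking transitivity yields that $V_\pm(S)$ and $V_\pm(S')$ are commensurate. Finally, for the ``in particular'' clause, $V(S)$ and $V(S')$ are index-at-most-two subgroups of the respective full Veech groups, hence are themselves commensurate once the full groups are; and since a subgroup of $\SL(2,\bR)$ commensurate to a lattice is again a lattice, $S$ is a lattice surface if and only if $S'$ is.
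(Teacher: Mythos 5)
First, a point of comparison: the paper does not prove this statement at all --- it quotes it as Theorem 4.9 of Gutkin--Judge \cite{GutkinJudge00} --- so your proposal is measured against the literature rather than against an internal argument. Your overall architecture is the standard and correct one: pass to the normal core to reduce to a regular covering $\hat S \to S'$, lift a finite-index subgroup of $\Aff_\pm(S')$ using the finiteness of the set of fixed-index subgroups of the finitely generated group $\pi_1\big((S')^\ast\big)$, descend a finite-index subgroup of $\Aff_\pm(\hat S)$ using the normalizer of the deck group $\Delta$, and conclude by transitivity of commensurability. Those steps are sound as you set them up, as is your handling of the unmarked-torus exception and of the passage from $V_\pm$ to $V$ at the end.

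However, the step you yourself single out as the crux contains a false claim: it is not true that an affine automorphism with trivial derivative fixing a singular point must be the identity, and consequently $T(\hat S)=\ker D$ need not embed into $\mathrm{Perm}(\hat\Sigma)$. The rigidity argument (a local translation fixing a point is locally the identity) applies only at regular points; at a cone point of angle $2\pi k$ there is a cyclic group of order $k$ of local isometries with derivative $I$ fixing the apex (the ``rotations by $2\pi j$''). These occur globally: in any regular translation covering branched with ramification index $e>1$, the stabilizer in $\Delta$ of a preimage of a branch point is a nontrivial cyclic group of order $e$ consisting of derivative-$I$ automorphisms fixing that singularity. The paper's own central example exhibits this: the covering $\tilde D \to \Pi_5$ of Theorem \ref{NormCov} is regular of degree $60$, and since the cone angle $18\pi$ at each of the $20$ singularities of $\tilde D$ maps to the cone angle $6\pi$ at the singularity of $\Pi_5$, every singularity of $\tilde D$ is fixed by an order-$3$ subgroup of the deck group. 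Fortunately your conclusion --- finiteness of $T(\hat S)$ when $\hat\Sigma \neq \emptyset$ --- is still true, and the repair is local: the kernel of the natural map $T(\hat S) \to \mathrm{Perm}(\hat\Sigma)$ consists of derivative-$I$ automorphisms fixing every point of $\hat\Sigma$, and recording the local rotation number at one fixed cone point of angle $2\pi k$ gives an injective homomorphism of this kernel into $\bZ/k\bZ$ (an element with rotation number $0$ is the identity near that cone point, hence fixes a regular point, hence is the identity globally). Thus $T(\hat S)$ is an extension of a subgroup of the finite group $\mathrm{Perm}(\hat\Sigma)$ by a finite cyclic group, and the rest of your descent argument goes through unchanged. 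With that one substitution (or with an appeal to finiteness of the conformal automorphism group of a compact Riemann surface of genus at least two), your proof is correct.
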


\begin{corollary}
\label{cor:finite index}
Let $\pi:S \to P$ is a translation covering, where $P$ is a primitive lattice surface with genus greater than $1$ and with singular set containing no removable singularities. Then $S$ is a lattice surface and $V_\pm(S)$ is a finite index subgroup of $V_\pm(P)$.
\end{corollary}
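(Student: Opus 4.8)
The plan is to obtain this corollary as a direct combination of the Gutkin--Judge commensurability theorem quoted above with the containment supplied by Corollary \ref{cor:subgroup}. The preliminary observation is that a translation covering is, in particular, a covering in the sense of the definition preceding Theorem \ref{thm:unique}, so both of these results are available for the map $\pi : S \to P$.

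First I would apply the Gutkin--Judge theorem to the translation covering $\pi : S \to P$. Since $P$ is a lattice surface by hypothesis, that theorem gives at once that $S$ is a lattice surface, and moreover that $V_\pm(S)$ and $V_\pm(P)$ are commensurate, i.e.\ $V_\pm(S) \cap V_\pm(P)$ is a finite index subgroup of each. This already settles the first assertion. Next, because $P$ is primitive of genus greater than one and $S$ covers $P$, Corollary \ref{cor:subgroup} upgrades the abstract commensurability to an honest containment $V_\pm(S) \subset V_\pm(P)$. Combining the two, the intersection $V_\pm(S) \cap V_\pm(P)$ is simply $V_\pm(S)$, and the commensurability statement then reads precisely that $V_\pm(S)$ has finite index in $V_\pm(P)$, which is the desired conclusion.

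The one point that I expect to require genuine care --- and the reason the hypothesis that $P$ has no removable singularities is imposed --- is a bookkeeping issue about which singular set is used to define the Veech group of $P$. In Corollary \ref{cor:subgroup} the group $V_\pm(P)$ is computed with the default singular set of the primitive surface, namely the zeros of its one-form, whereas in the present statement $V_\pm(P)$ refers to the marked singular set $\Sigma'$ entering the translation covering. When $P$ carries no removable singularities these two sets coincide, so the two groups are literally the same and the containment from Corollary \ref{cor:subgroup} may be fed directly into the commensurability from Gutkin--Judge without any mismatch. Apart from verifying this compatibility of singular sets, the argument is a direct citation of the two quoted results, so I do not anticipate any further obstacle.
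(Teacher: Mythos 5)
Your proof is correct and is precisely the argument the paper intends: the corollary is stated without an explicit proof immediately after the Gutkin--Judge theorem, and the implicit derivation is exactly your combination of that theorem (giving the lattice property and commensurability) with the containment $V_{\pm}(S) \subset V_{\pm}(P)$ from Corollary \ref{cor:subgroup}, so that the intersection in the commensurability statement collapses to $V_{\pm}(S)$ itself. Your identification of the role of the no-removable-singularities hypothesis --- ensuring the singular set $\Sigma'$ of the translation covering coincides with the default singular set (the zeros of the one-form) used in Corollary \ref{cor:subgroup}, so the two meanings of $V_{\pm}(P)$ agree --- is also the correct reading of why that hypothesis appears.
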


\subsection{Hyperbolic Geometry}
\label{sect:hyperbolic}

The group of M\"obius transformations preserving the upper half plane $\bH^2=\{z \in \bC:~\Im(z)>0\}$ is given by 
$\PSL(2,\bR)$. This group acts isometrically when the upper half plane is endowed with the metric making it isometric to the hyperbolic plane. The group $\PSL(2,\bR)$ is the orientation-preserving isometry group of $\bH^2$ and the full isometry group is
$\PGL(2,\bR)$. The elements of $\PGL(2,\bR) \smallsetminus \PSL(2,\bR)$ are M\"obius transformations carrying the upper half plane to the lower half plane, which then can be composed with complex conjugation to get an orientation reversing isometry of $\bH^2$. 

Let $U \bH^2$ be the unit tangent bundle of $\bH^2$, and observe that $\PSL(2,\bR)$ acts simply transitively on $U \bH^2$.
Fixing a $\bu_0 \in U \bH^2$, the map sending $A \in \PSL(2,\bR)$ to $A(\bu_0)$ is a homeomorphism $\PSL(2,\bR) \to U \bH^2$.
Thus $\PSL(2,\bR)$ is homeomorphic to the product $\bH^2 \times \bS^1$ and so its universal cover $\widetilde \PSL(2,\bR)$ forms part of a short exact sequence
\begin{equation}
\label{eq:short exact2}
0 \to \bZ \xrightarrow{i} \widetilde \PSL(2,\bR) \xrightarrow{\upsilon} \PSL(2,\bR) \to 0.
\end{equation}

For each $\tilde g \in \widetilde \PSL(2,\bR)$ let $\tilde \gamma_{\tilde g}:[0,1] \to \widetilde \PSL(2,\bR)$ be a path joining the identity
to $\tilde g$. The homotopy class relative to the endpoints $[\tilde \gamma_{\tilde g}]$ is independent of the choice of this path.
Observe 
\begin{equation}
\label{eq:concatenation2}
[ (\tilde \gamma_{\tilde h} \cdot \tilde g) \ast \tilde \gamma_{\tilde g}] = [\tilde \gamma_{\tilde h \tilde g}]
\quad
\text{for all $\tilde g, \tilde h \in \widetilde \PSL(2,\bR)$,}
\end{equation}
where $\alpha \ast \beta$ denotes path concatenation with $\alpha$ following $\beta$ 
and $\tilde \gamma_{\tilde h} \cdot \tilde g$ denotes the path $t \mapsto \tilde \gamma_{\tilde h}(t) \tilde g$.
(Note that many authors use the opposite convention for path concatenation order.)

Now let $G \subset \PSL(2,\bR)$ be a discrete group and set $X_G=\PSL(2,\bR)/G$. The quotient $X_G$ has a natural basepoint given by the coset of the identity which we will write as $IG$. Observe that the universal covering map $p_G: \widetilde \PSL(2,\bR) \to X_G$ (which factors through $\upsilon$ of \eqref{eq:short exact2}) is the universal covering map, and using \eqref{eq:concatenation2} we can observe that we have the following isomorphism to the fundamental group
\begin{equation}
\label{eq:iota2}
\iota_G: 
\begin{array}{ccc}
\upsilon^{-1}(G) & \to & \pi_1(X_G) \\
\tilde g & \mapsto & [p_G \circ \tilde \gamma_{\tilde g}] \\
\end{array},
\end{equation}
where we follow the convention that $[\alpha] [\beta]=[\alpha \ast \beta]$ is the group operation in $\pi_1(X_G)$ where $\ast$ is defined as above.

Now let $H$ be a subgroup of $G$ and define $X_H=\PSL(2,\bR)/H$. Then we get a natural covering map $f:X_H \to X_G$. 

Recall that for a covering map $\phi: Y \to X$, the {\em monodromy action} 
is the action of $\pi_1(X,x_0)$ on the fiber
$\phi^{-1}(x_0)$, 
$M:\pi_1(X,x_0) \times \phi^{-1}(x_0) \to \phi^{-1}(x_0)$.
Here if $[\gamma] \in \pi_1(X,x_0)$ and $y \in \phi^{-1}(x_0)$, then 
$$M([\gamma],y) = \tilde \gamma(1), \quad \text{where $\tilde \gamma$ is the lift
of $\gamma$ to $Y$ so that $\tilde \gamma(0)=y$.}$$
With the conventions above, the monodromy action is a left action. 
It is a standard fact from topology that the monodromy action determines the isomorphism class of the cover since the subgroup of the fundamental group $\pi_1(X,x_0)$ associated to the cover $Y$ with a choice of basepoint $y_0 \in f^{-1}(x_0)$ is the subgroup whose monodromy action stabilizes $y_0$.

Observe that for the covering map $f:X_H \to X_G$, the fiber above the basepoint $f^{-1}(IG)$ is given by the collection of cosets of the form $gH$ with $g \in G$, so
that $f^{-1}(IG) = G / H$. 

\begin{proposition}
\label{prop:monodromy}
The mondromy action of $\pi_1(X_G)$ on the fiber $f^{-1}(IG)$ factors through the natural action of $G$ on $G/H$. 
Concretely,
the action of $\iota_G(\tilde g) \in \pi_1(X_G)$ on $g' H \in G/H$ is 
$$M\big(\iota_G(\tilde g),g'H \big)=g g' H \quad \text{where $g=\upsilon(\tilde g)$.}$$
\end{proposition}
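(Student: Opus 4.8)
The plan is to compute the monodromy action directly from its definition by explicitly lifting the loop representing $\iota_G(\tilde g)$ to the cover $X_H$ and reading off its endpoint. The key structural fact I would exploit is that all three spaces in play, $\widetilde{\PSL(2,\bR)}$, $X_G = \PSL(2,\bR)/G$, and $X_H = \PSL(2,\bR)/H$, are quotients of $P := \PSL(2,\bR)$, with every covering map induced by a coset projection. Concretely, I would record the factorizations $p_G = q_G \circ \upsilon$ and $f \circ q_H = q_G$, where $q_G:P \to X_G$ and $q_H:P \to X_H$ are the projections $x \mapsto xG$ and $x \mapsto xH$. The first is asserted in the construction of $p_G$, and the second holds because $f(xH) = xG$.

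First I would identify the loop. By definition $\iota_G(\tilde g) = [\gamma]$ with $\gamma = p_G \circ \tilde\gamma_{\tilde g}$, where $\tilde\gamma_{\tilde g}$ joins the identity to $\tilde g$ in $\widetilde{\PSL(2,\bR)}$. Setting $\beta = \upsilon \circ \tilde\gamma_{\tilde g}$, a path in $P$ from the identity $I$ to $g = \upsilon(\tilde g)$, the factorization gives $\gamma = q_G \circ \beta$. A quick check confirms $\gamma$ is a loop based at $IG$: its endpoint is $\upsilon(\tilde g)G = gG$, which equals $G = IG$ precisely because $\tilde g \in \upsilon^{-1}(G)$ forces $g \in G$.

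Next I would produce the lift of $\gamma$ to $X_H$ starting at $g'H$. The natural candidate is $\tilde\gamma = q_H \circ \delta$, where $\delta(t) = \beta(t)\,g'$ is the path in $P$ obtained by right-translating $\beta$ by $g'$. I would then verify the two defining properties of a lift: that $\tilde\gamma(0) = q_H(g') = g'H$, and that $f \circ \tilde\gamma = \gamma$, the latter because $f\big(q_H(\delta(t))\big) = \beta(t)g'G = \beta(t)G = \gamma(t)$, using $g' \in G$. By uniqueness of path lifts this is the correct lift, so the monodromy image is its endpoint $\tilde\gamma(1) = q_H(\beta(1)g') = q_H(gg') = gg'H$, which is exactly the claimed formula.

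There is no serious obstacle here; the argument is a matter of unwinding definitions, and the only thing requiring genuine care is the bookkeeping of left/right conventions. Since the coset spaces use right cosets $xG$ and $xH$, the lift must be formed by right-multiplication $\delta(t) = \beta(t)g'$ (not left) in order for both $f \circ \tilde\gamma = \gamma$ and $\tilde\gamma(0) = g'H$ to hold simultaneously, and it is exactly this right-multiplication that converts the group element $g = \upsilon(\tilde g)$ into left multiplication $g'H \mapsto gg'H$ on the fiber. As a consistency check I would confirm that the resulting assignment is a genuine left action in the sense of the monodromy convention: using that $\iota_G$ is a homomorphism and $\upsilon(\tilde h \tilde g) = \upsilon(\tilde h)\upsilon(\tilde g)$, both $\iota_G(\tilde h)\cdot\big(\iota_G(\tilde g)\cdot g'H\big)$ and $\big(\iota_G(\tilde h)\iota_G(\tilde g)\big)\cdot g'H$ evaluate to $hgg'H$ with $h = \upsilon(\tilde h)$, confirming that the monodromy action indeed factors through the natural left action of $G$ on $G/H$.
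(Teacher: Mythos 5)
Your proof is correct and follows essentially the same route as the paper's: both lift the loop $p_G \circ \tilde\gamma_{\tilde g}$ to $X_H$ by right-translating the underlying path by $g'$ and projecting, then read off the endpoint $gg'H$. The only cosmetic difference is that the paper performs the right-translation upstairs in $\widetilde{\PSL}(2,\bR)$ using a chosen preimage $\tilde g'$ of $g'$, whereas you work directly in $\PSL(2,\bR)$ via $\delta(t)=\beta(t)g'$; these produce the identical lift since $p_H = q_H \circ \upsilon$.
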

\begin{proof}
By \eqref{eq:iota2}, we see that $\iota_G(\tilde g)$ is represented by the path $p_G \circ \tilde \gamma_{\tilde g}$,
and in particular, $\tilde \gamma_{\tilde g}$ is a lift of the path to $\widetilde \PSL(2,\bR)$. Choose a preimage $\tilde g' \in 
\widetilde \PSL(2,\bR)$ of $g' \in G$. Then $p_H \circ (\tilde \gamma_{\tilde g} \cdot \tilde g')$ is a lift of the path 
$p_G \circ \tilde \gamma_{\tilde g}$ to $X_H$ starting at $p_H(\tilde g')=g' H$. The terminal point of this path is
$p_H(\tilde g \tilde g')=g g' H.$
\end{proof}

\begin{remark}
The convention that $\alpha \ast \beta$ represents $\beta$ followed by $\alpha$ makes Proposition \ref{prop:monodromy}
natural as it appears.
We quotient $X_G$ and $X_H$ on the right because our discrete groups will be Veech groups obtained as stabilizers of a left-action. 
Statements above change when concatenating in the opposite order and when quotienting on the left.
\end{remark}

\section{Cone surfaces with Veech unfoldings}
\label{sect:cone lattice}

In this section, we consider the case of a Euclidean cone surface $D$ which is given by a $k$-differential and has an unfolding $\tilde D$ which is a lattice surface, that is, has an affine automorphism group which is a lattice in $SL(2, \mathbb R)$.

In addition to the Veech Dichotomy (Theorem~\ref{theorem:veech}), we record the following important result of Veech~\cite[\S 3]{Veech89} on lattice surfaces:

\begin{theorem}\label{theorem:finiteorbits} Let $S$ be a lattice surface. The sets of saddle connections and of maximal cylinders decompose into finitely many $\Aff(S)$ orbits. 

\end{theorem}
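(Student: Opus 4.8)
The plan is to reduce the statement to two finiteness facts and then assemble them. Say a direction is \emph{periodic} if $S$ decomposes into parallel cylinders in that direction. The two facts I would isolate are: (i) there are only finitely many periodic directions up to the linear action of $V(S)$; and (ii) each periodic direction contains only finitely many saddle connections and only finitely many maximal cylinders. Granting these, the argument runs as follows. Any saddle connection, and any maximal cylinder (whose core curve is a closed geodesic), lies in a periodic direction by the Veech dichotomy (Theorem~\ref{theorem:veech}). By (i) that direction is $V(S)$-equivalent to one of finitely many chosen representatives $\theta_1,\dots,\theta_m$, so there is $A \in V(S)$ carrying it to some $\theta_i$; choosing $\phi \in \Aff(S)$ with $D\phi = A$ (possible since $V(S) = D(\Aff(S))$), we see our object is $\Aff(S)$-equivalent to an object in direction $\theta_i$. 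By (ii) there are only finitely many of these in each $\theta_i$, and finitely many $i$, yielding finitely many $\Aff(S)$ orbits in both cases.

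To establish (i) I would pass through the dictionary between directions and the boundary of the hyperbolic plane. A direction in $\bR\bP^1$ determines a point of $\partial \bH^2$, and the linear action of $V(S)$ on directions corresponds to the action of $PV(S)$ on $\partial \bH^2$; a nonzero vector is fixed by $A \in V(S)$ exactly when the associated boundary point is fixed by the image of $A$ in $PV(S)$. By the Veech dichotomy a direction is periodic precisely when it is fixed by a parabolic element of $V(S)$, so periodic directions correspond exactly to parabolic fixed points on $\partial \bH^2$. Now I invoke the lattice hypothesis: since $S$ is a lattice surface, $PV(S)$ is a lattice in $\PSL(2,\bR)$, so the Teichm\"uller curve $\bH^2/PV(S)$ of \S\ref{sect:hyperbolic} has finite hyperbolic area and hence finitely many cusps. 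As the cusps are in bijection with the $PV(S)$-orbits of parabolic fixed points, they are in bijection with the $V(S)$-orbits of periodic directions, giving (i). I would then fix one representative direction per cusp.

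To establish (ii) I would fix a single periodic direction. Because $S$ has finite area and each cylinder has positive area, the decomposition consists of finitely many maximal cylinders. The complement of the open cylinders is exactly the union of their boundaries, which are curves built from saddle connections in the given direction; each boundary circle has the length of the cylinder circumference and every saddle connection has length at least the (positive) systole, so each boundary meets only finitely many saddle connections, and every saddle connection in the direction occurs on some boundary. Hence there are finitely many saddle connections and finitely many maximal cylinders in the direction, which is (ii). Combining (i) and (ii) as in the first paragraph completes the proof.

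The main obstacle is the structural input behind (i): that a finite-covolume Fuchsian group has only finitely many cusps, and that cusps correspond bijectively to orbits of parabolic fixed points on $\partial \bH^2$. This is the fact from the reduction theory of lattices in $\PSL(2,\bR)$ that does the real work, and the delicate point is matching the analytic notion of ``cusp'' of the Teichm\"uller curve with the combinatorial notion of ``$V(S)$-orbit of periodic direction'' through the dictionary above; the Veech dichotomy is precisely what bridges these, since without it a saddle connection need not lie in a parabolic-fixed direction. A secondary point worth checking is that the bookkeeping tracks saddle connections and cylinders as geometric objects rather than merely their holonomy vectors, but this is routine once the direction-by-direction finiteness in (ii) is in hand.
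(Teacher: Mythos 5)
Your overall strategy is sound, and it is in fact the standard argument. Note that the paper itself does not prove this statement: it records it as a theorem of Veech, citing \cite[\S 3]{Veech89}. But the mechanism you use --- periodic directions correspond to directions fixed by parabolics (Veech dichotomy), hence to maximal parabolic subgroups of $V(S)$, hence to the finitely many cusps of the Teichm\"uller curve, after which one reduces to per-direction finiteness and lifts $V(S)$-equivalence of directions to $\Aff(S)$-equivalence of objects via $V(S)=D\big(\Aff(S)\big)$ --- is exactly the mechanism the paper itself invokes in \S\ref{sect:cone lattice}, in the discussion preceding Lemma \ref{lem:orbits on cover}. Your part (i) and the assembly step are correct as written.

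The one step that fails as written is the finiteness of the cylinder decomposition in your part (ii). You argue that since $S$ has finite area and each cylinder has positive area, there are finitely many maximal cylinders. That inference is invalid: infinitely many disjoint sets of positive area can have finite total area (areas $2^{-n}$, say), and you have no uniform lower bound on cylinder area --- circumferences are bounded below by the systole, but heights are not. Moreover, your finiteness of saddle connections in a fixed direction is deduced from the finiteness of the boundary circles, so it inherits the gap. The repair is easy but runs in the opposite order: a singularity of cone angle $2\pi k$ emits exactly $k$ separatrices in a fixed direction $\theta$, so there are finitely many separatrices in direction $\theta$; each saddle connection in direction $\theta$ begins along one of them, and two distinct saddle connections cannot begin along the same separatrix (the endpoint of the shorter would be a singularity interior to the longer), so there are finitely many saddle connections in direction $\theta$; finally, the boundary of each maximal cylinder in the decomposition contains at least one saddle connection, and each saddle connection borders at most two cylinders, so the cylinders are finite in number as well. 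With that substitution your proof is complete.
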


Since $\tilde D$ is a lattice surface, it is either a branched cover of a primitive lattice surface $\Pi$ of genus larger than one (see Theorem \ref{thm:unique} and Corollary \ref{cor:subgroup}), or it is a branched cover of a torus $\Pi$ branched over one point. In either case, we get the following diagram:
\begin{equation}
\label{eq:diagram}
\begin{tikzcd}
 & \tilde D \arrow{dr}{\pi_{\Pi}} \arrow{dl}[swap]{\pi_D}\\
D   && \Pi
\end{tikzcd}
\end{equation}
and we have $V_\pm(\tilde D) \subset V_\pm(\Pi)$ and both these groups are lattices. As a consequence of the Veech dichotomy, every non-singular geodesic on $D$ is either closed or dense on $D$. Given the setup above, let ${\mathcal A}$ be a collection of objects on $D$ which have well-defined lifts to the unfolding $\tilde D$. Here, we expect multiple choices of the lift which differ by elements of the deck group of $\pi_D$. 

\begin{definition}
Two objects $a,b \in {\mathcal A}$ are \emph{unfolding-symmetric} if $\tilde a$ and $\tilde b$ on $\tilde D$ differ by the action of an element of
$\Aff_\pm(\tilde D)$. 
\end{definition}

Note that the choice of lifts is irrelevant since the deck group of $\pi_D$ is contained in $\Aff_\pm(\tilde D)$. The notion of unfolding-symmetry is an equivalence relation on ${\mathcal A}$. Some possible examples of ${\mathcal A}$ is the collection of saddle connections on $D$, the set of maximal immersed cylinders, or the set of closed saddle connections. Sometimes (but not always) being a closed saddle connection is an unfolding-symmetry invariant notion. A sufficient condition is if the action of the group $\Aff_\pm(\tilde D)$ on the singularities of $\tilde D$ induces an action on the singularities of $D$.

\begin{definition}
A collection of objects ${\mathcal A}$ on $D$ is \emph{unfolding-symmetry admissible} if unfolding-symmetry induces a well-defined equivalence relation on ${\mathcal A}$.
\end{definition}

\begin{lemma}
\label{rem:closed saddle}
If under $\pi_D$, $\Aff_\pm(\tilde D)$ induces a well-defined action on the singularities of $D$ (i.e., if for any two singularities $x_1$ and $x_2$ with $\pi_D(x_1)=\pi_D(x_2)$, for any $f \in \Aff_{\pm}(\tilde D)$ we have $\pi_D \circ f(x_1)=\pi_D \circ f(x_2)$), then 
the collection of closed saddle connections is unfolding-symmetry admissible.
\end{lemma}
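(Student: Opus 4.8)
The plan is to characterize closedness of a saddle connection on $D$ purely in terms of the endpoints of its lift to $\tilde D$, and then to check that this characterization survives the unfolding-symmetry relation, with the hypothesis entering at exactly one point. First I would record the lifting fact. Let $a$ be a closed saddle connection on $D$, i.e. a geodesic loop based at a singularity $p$ of $D$ with no singularity in its interior. Since $\pi_D \colon \tilde D \to D$ is branched only over singularities, no interior point of a lift $\tilde a$ can be a singularity of $\tilde D$ (it would project to a singularity of $D$ interior to $a$), so $\tilde a$ is again a saddle connection, now joining two singularities $x_1, x_2 \in \pi_D^{-1}(p)$; in particular $\pi_D(x_1) = \pi_D(x_2) = p$. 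Running this in reverse for a general saddle connection on $D$ gives the characterization I want: \emph{a saddle connection on $D$ is closed if and only if the two endpoints of any of its lifts project under $\pi_D$ to the same point of $D$.}

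Next I would dispatch the well-definedness issue. Any two lifts of a given saddle connection differ by an element $d$ of the deck group of $\pi_D$, and $d$ satisfies $\pi_D \circ d = \pi_D$, so the values $\pi_D(x_1)$ and $\pi_D(x_2)$ — and hence the characterization above — are independent of the chosen lift. Because the deck group is contained in $\Aff_\pm(\tilde D)$, the $\Aff_\pm(\tilde D)$-orbit of the lift $\tilde a$ is also independent of the lift, which is precisely what makes unfolding-symmetry a well-defined relation on saddle connections in the first place.

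The crux is to show that closedness is an unfolding-symmetry invariant; once this is established, the collection of closed saddle connections is a union of unfolding-symmetry classes, so the relation restricts to a genuine equivalence relation on it, which is exactly the assertion of admissibility. Suppose $a$ is closed and $b$ is unfolding-symmetric to $a$, so that $\tilde b = f(\tilde a)$ for some $f \in \Aff_\pm(\tilde D)$. If $x_1, x_2$ are the endpoints of $\tilde a$, then $f(x_1), f(x_2)$ are the endpoints of $\tilde b$ (and are again singularities of $\tilde D$, since $f$ permutes them). As $a$ is closed we have $\pi_D(x_1) = \pi_D(x_2)$, and the hypothesis then gives $\pi_D(f(x_1)) = \pi_D(f(x_2))$; by the characterization, $b$ is closed. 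Thus no unfolding-symmetry class meets both the closed and the non-closed saddle connections, and the collection of closed saddle connections is unfolding-symmetry admissible.

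I expect the only delicate points to be bookkeeping rather than genuine obstacles: verifying that a lift of a closed saddle connection is a single saddle connection (handled by branching over singularities) and tracking the endpoint data correctly through $f$. The hypothesis is invoked in exactly one step — transporting the equality $\pi_D(x_1) = \pi_D(x_2)$ across $f$ — and this is precisely the step that can fail in general, which is what the preceding remark means when it says closedness is \emph{sometimes but not always} an unfolding-symmetry invariant notion.
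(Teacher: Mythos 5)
Your proof is correct, and it is essentially the argument the paper intends: the paper actually states this lemma without a written proof, treating it as immediate from the preceding remark that the sufficient condition makes closedness an unfolding-symmetry invariant, and your write-up is exactly the natural filling-in of that reasoning (closedness of a saddle connection on $D$ is detected by whether the endpoints of a lift have equal $\pi_D$-images, and the hypothesis is invoked precisely to transport that equality across $f \in \Aff_\pm(\tilde D)$). Your reading of ``admissible'' — that the set of closed saddle connections must be a union of unfolding-symmetry classes, so that the relation on all saddle connections does not mix closed with non-closed ones — is also the correct interpretation, since that invariance is what Proposition~\ref{prop:unfolding-symmetry classes} needs when applied to this collection.
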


\begin{corollary}\label{cor:platonicequiv} If $D$ is the surface of a Platonic solid, then the collection of closed saddle connections is unfolding-symmetry admissible.
\end{corollary}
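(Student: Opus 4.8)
The plan is to reduce the corollary to a verification of the hypothesis of Lemma \ref{rem:closed saddle}. That hypothesis asks that $\Aff_\pm(\tilde D)$ permute the fibers of the restriction $\pi_D|_{\tilde \Sigma}\colon \tilde \Sigma \to \Sigma$ compatibly, i.e., that $\pi_D(x_1)=\pi_D(x_2)$ imply $\pi_D \circ f(x_1)=\pi_D \circ f(x_2)$ for every $f \in \Aff_\pm(\tilde D)$. I claim this is automatic for a Platonic solid because $\pi_D$ restricts to a \emph{bijection} $\tilde \Sigma \to \Sigma$: if the only way to have $\pi_D(x_1)=\pi_D(x_2)$ is $x_1=x_2$, then the required identity holds trivially. (Here I use that $f$ preserves $\tilde \Sigma$, since an affine automorphism permutes the cone points and marked points of $\tilde D$, so $f(x_i) \in \tilde \Sigma$ and $\pi_D \circ f(x_i)$ is a well-defined singularity of $D$.)

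So the content is to show that $\pi_D|_{\tilde \Sigma}$ is a bijection, which amounts to showing that every vertex of $D$ is totally ramified in the unfolding cover. For this I would invoke the description of $PT$ for rational polyhedra recorded in \S\ref{sect:unfolding}: all vertices of a Platonic solid are congruent, so they share a common angle $\frac{p}{q}2\pi$ with $\frac{p}{q}$ in lowest terms, whence $k=\mathit{lcm}(q_i)=q$ is the order of the cyclic deck group $\Delta \cong \bZ/k\bZ$ of $\pi_D$. The monodromy of the unfolding cover around a loop encircling any \emph{single} vertex is rotation by that vertex angle $\frac{p}{q}2\pi=\frac{p}{k}2\pi=p\cdot\frac{2\pi}{k}$; since $\gcd(p,k)=1$, this element is a generator of $\Delta$. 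A generator of the cyclic deck group acts transitively on the fiber over that vertex, so the fiber consists of a single point with full ramification index $k$. Hence each vertex has exactly one preimage in $\tilde \Sigma$, and $\pi_D|_{\tilde \Sigma}$ is a bijection. Combining this with Lemma \ref{rem:closed saddle} yields the corollary.

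I do not expect a serious obstacle here; the two points that require care are minor. First, one must confirm that the monodromy around a single vertex is a full generator of $\Delta$ rather than a proper power — this is precisely the lowest-terms condition $\gcd(p,k)=1$, together with the equality $k=q$ that holds because all vertices of a Platonic solid are congruent. Second, one should note the uniformity of the argument across the five solids, including the tetrahedron, whose unfolding lies in $\cH_1(0^4)$ so that $\tilde \Sigma$ consists of marked points (removable singularities) rather than genuine cone points; the argument is insensitive to this distinction, since it uses only the combinatorics of the branched cover (total ramification) and not the cone angles upstairs. As a sanity check one reads off Table \ref{PlatSolidsTable} that in each row $|\tilde \Sigma|$ equals the number of vertices of the solid, matching the asserted bijection.
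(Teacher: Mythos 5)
Your proof is correct, and its overall architecture coincides with the paper's: both reduce the corollary to Lemma \ref{rem:closed saddle} by showing that $\pi_D$ restricts to a bijection $\tilde \Sigma \to \Sigma$, which makes the hypothesis of that lemma hold vacuously. Where you differ is in how the bijection is established. The paper disposes of it in one line by citing Lemma \ref{lem:strata computation}: the vertices of a Platonic solid are simple poles of the $k$-differential (cone angle $2\pi\frac{k-1}{k}$), and the cone-angle computation in that lemma shows each such pole lifts to a single zero of order $k-2$ on the canonical $k$-cover, so the two singular sets both have $2k$ points and correspond bijectively --- this is exactly the content of Table \ref{PlatSolidsTable}. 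You instead re-derive total ramification on the monodromy side: since all vertices of a Platonic solid are congruent, the common angle $\frac{p}{q}2\pi$ in lowest terms forces $k=\mathit{lcm}(q_i)=q$, and parallel transport around a single vertex is rotation by $p\cdot\frac{2\pi}{k}$ with $\gcd(p,k)=1$, a generator of the deck group $\bZ/k\bZ$; a generator acts transitively on the fiber, so each vertex has a unique preimage. Both arguments encode the same geometric fact, but yours is self-contained (it invokes only the propositions of \S\ref{sect:unfolding} rather than the strata computation) and is slightly more general: it needs only that all vertex angles share the same denominator $q$ in lowest terms, not that they equal $2\pi\frac{k-1}{k}$, so it would apply verbatim to vertex-transitive rational cone surfaces whose singularities are not simple poles of the $k$-differential. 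Your explicit handling of the tetrahedron, where $\tilde\Sigma$ consists of marked points rather than genuine zeros, is also a detail the paper leaves implicit.
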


\begin{proof} Let $D$ be the surface of a Platonic solid. Then $\pi_D$ restricted to the singularities of $\tilde D$ is a bijection to the singularities of $D$ by Lemma \ref{lem:strata computation}.
\end{proof}


By Theorem~\ref{theorem:finiteorbits}, we can enumerate unfolding-symmetry equivalence classes of saddle connections on $D$. Let ${\mathcal A}$ be either of these classes of objects
on $D$, and $\tilde {\mathcal A}$ be the collection of lifts of elements of ${\mathcal A}$ to $\tilde D$.

The projectivization $\bR P^1$ is the space of lines through the origin in $\bR^2$. 
Let ${\mathit Dir}: \tilde {\mathcal A} \to \bR P^1$ send a saddle connection (or cylinder) to
the parallel element of $\bR P^1$. The image ${\mathit Dir}({\mathcal A})$ are the periodic directions. By Veech Dichotomy, $[{\mathbf v}] \in \bR  P^1$ is a periodic direction
if and only if the stabilizer of a representative vector ${\mathbf v}$ of $[{\mathbf v}]$
in $V(S)$ is a non-trivial (necessarily parabolic) subgroup. Each periodic direction is
then associated to a maximal parabolic subgroup of $V(S)$, and the conjugacy classes of such subgroups are naturally associated to cusps of $\bH^2/V(\tilde D)$ of which there are only finitely many since $V(S)$ is a lattice.

The following reduces the problem of finding a representative from each $\Aff_\pm(\tilde D)$-orbit in $\tilde {\mathcal A}$ to looking at finitely many directions and automorphisms preserving those directions.

\begin{lemma}
\label{lem:orbits on cover}
Choose finitely many periodic directions $[{\mathbf v}_1], \ldots, [{\mathbf v}_k] \in \bR P^1$ enumerating the periodic directions up to the action of $V_\pm(\tilde D)$.
Then each $\Aff_\pm(\tilde D)$-orbit in $\tilde {\mathcal A}$ intersects exactly one set of the form ${\mathit Dir}^{-1}([{\mathbf v}_i])$ for $i \in \{1, \ldots, k\}$. 
Furthermore, if for each $i$ we let $P_i \subset \Aff_\pm(\tilde D)$ be the stabilizer of $[{\mathbf v}_i]$, i.e.,
$$P_i = \big\{\phi \in \Aff_\pm(\tilde D)~:~ D(\phi)([{\mathbf v}_i])=[{\mathbf v}_i]\big\},$$
then such non-empty intersection with ${\mathit Dir}^{-1}([{\mathbf v}_i])$ has the form 
$P_i(\tilde \sigma)$ for some $\tilde \sigma \in \tilde {\mathcal A}$ with ${\mathit Dir}(\tilde \sigma)=[{\mathbf v}_i]$.
\end{lemma}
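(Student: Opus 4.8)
The plan is to reduce everything to the equivariance of the direction map. The key observation I would record first is that if $\phi \in \Aff_\pm(\tilde D)$ has derivative $A = D(\phi) \in V_\pm(\tilde D)$, then $\phi$ permutes $\tilde {\mathcal A}$ and for every $\tilde \sigma \in \tilde {\mathcal A}$ we have
\[
{\mathit Dir}\big(\phi(\tilde \sigma)\big) = A \cdot {\mathit Dir}(\tilde \sigma),
\]
where $A$ acts on $\bR P^1$ through its linear action on $\bR^2$. This holds because an affine automorphism with linear part $A$ carries a saddle connection (or cylinder core) with holonomy vector ${\mathbf w}$ to one with holonomy vector $A {\mathbf w}$. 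Together with the defining surjection $D:\Aff_\pm(\tilde D) \to V_\pm(\tilde D)$, this identifies the action of $\Aff_\pm(\tilde D)$ on the directions ${\mathit Dir}(\tilde {\mathcal A})$ with the action of $V_\pm(\tilde D)$ on $\bR P^1$. I will also use that every $\tilde \sigma \in \tilde {\mathcal A}$ has a \emph{periodic} direction: since $\tilde D$ is a lattice surface and $\tilde \sigma$ is a saddle connection or cylinder, its direction is stabilized by a parabolic in $V(\tilde D)$ by the Veech Dichotomy (Theorem~\ref{theorem:veech}).

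Granting this, the first assertion becomes a formal orbit computation. Fix an orbit $O = \Aff_\pm(\tilde D)\cdot \tilde \sigma$ in $\tilde {\mathcal A}$. To see $O$ meets some ${\mathit Dir}^{-1}([{\mathbf v}_i])$, note that ${\mathit Dir}(\tilde \sigma)$ is periodic, hence $V_\pm(\tilde D)$-equivalent to exactly one $[{\mathbf v}_i]$ by the choice of representatives; picking $A \in V_\pm(\tilde D)$ with $A \cdot {\mathit Dir}(\tilde \sigma) = [{\mathbf v}_i]$ and a lift $\phi \in \Aff_\pm(\tilde D)$ of $A$, equivariance gives $\phi(\tilde \sigma) \in O \cap {\mathit Dir}^{-1}([{\mathbf v}_i])$. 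For uniqueness, if $O$ meets both ${\mathit Dir}^{-1}([{\mathbf v}_i])$ and ${\mathit Dir}^{-1}([{\mathbf v}_j])$, then two elements of $O$ with these directions differ by some $\psi \in \Aff_\pm(\tilde D)$, whence $D(\psi) \cdot [{\mathbf v}_i] = [{\mathbf v}_j]$; as the $[{\mathbf v}_i]$ are pairwise $V_\pm(\tilde D)$-inequivalent, $i = j$.

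For the second assertion, fix $i$, suppose $O \cap {\mathit Dir}^{-1}([{\mathbf v}_i])$ is nonempty, and choose $\tilde \sigma$ in it. I will show this intersection equals $P_i(\tilde \sigma)$. The inclusion $P_i(\tilde \sigma) \subseteq O \cap {\mathit Dir}^{-1}([{\mathbf v}_i])$ is immediate, since for $\phi \in P_i$ we have $\phi(\tilde \sigma) \in O$ and ${\mathit Dir}(\phi(\tilde \sigma)) = D(\phi)\cdot [{\mathbf v}_i] = [{\mathbf v}_i]$ by definition of $P_i$. Conversely, if $\tilde \tau \in O \cap {\mathit Dir}^{-1}([{\mathbf v}_i])$, write $\tilde \tau = \psi(\tilde \sigma)$ with $\psi \in \Aff_\pm(\tilde D)$; then $D(\psi)\cdot [{\mathbf v}_i] = {\mathit Dir}(\psi(\tilde \sigma)) = {\mathit Dir}(\tilde \tau) = [{\mathbf v}_i]$, so $\psi \in P_i$ and $\tilde \tau \in P_i(\tilde \sigma)$.

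The orbit bookkeeping is routine; the point requiring care, and the main obstacle, is pinning down the equivariance of ${\mathit Dir}$ together with the invariance of $\tilde {\mathcal A}$ under $\Aff_\pm(\tilde D)$ (which holds because $\tilde {\mathcal A}$ is the set of all lifts of a geometrically defined class of objects). It is precisely this equivariance that transfers the finiteness of periodic directions up to $V_\pm(\tilde D)$ (the cusps of a lattice surface) into the finite orbit bookkeeping on $\tilde {\mathcal A}$.
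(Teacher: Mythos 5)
Your proposal is correct and takes essentially the same route as the paper: the paper's proof likewise rests on the surjectivity of the derivative map $D\big(\Aff_\pm(\tilde D)\big)=V_\pm(\tilde D)$ and the equivariance ${\mathit Dir}\big(\phi(\tilde \sigma)\big)=D\phi \cdot {\mathit Dir}(\tilde \sigma)$, leaving the orbit bookkeeping you spell out to the reader. You have simply written out in full the formal computation the paper compresses into two lines.
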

\begin{proof}
The fact that we can take $k$ finite follows from the discussion preceding the lemma.
The remainder can be deduced from naturality of the derivative and the ${\mathit Dir}$-map,
i.e., we have $D\big(\Aff_\pm(\tilde D)\big)=V_\pm(\tilde D)$ and 
${\mathit Dir}\big(\phi(\tilde \sigma)\big)=D\phi \cdot {\mathit Dir}(\tilde \sigma)$ for
all $\phi \in \Aff_\pm(\tilde D)$ and all $\tilde \sigma \in \tilde {\mathcal A}$.
\end{proof}

The following result says that the same works to choose representatives from each unfolding-symmetry class in ${\mathcal A}$:

\begin{proposition}
\label{prop:unfolding-symmetry classes}
Suppose that $\tilde \sigma_1, \ldots, \tilde \sigma_n \in \tilde {\mathcal A}$ gives one representative from every $\Aff_\pm(\tilde D)$-orbit. Let $\sigma_1, \ldots, \sigma_n \in {\mathcal A}$ be the projections to $D$. Then the list $\sigma_1, \ldots, \sigma_n$ has exactly one representative from each unfolding-symmetry class in ${\mathcal A}$.
\end{proposition}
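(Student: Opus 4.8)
The plan is to recast both sides of the claim in terms of group orbits and reduce everything to one clean bijection. Write $G=\Aff_\pm(\tilde D)$ and let $\Delta\subset G$ be the deck group of $\pi_D$; the containment $\Delta\subset G$, noted just after the definition of unfolding-symmetry, is the fact on which the whole argument rests. Because the lifts of a fixed object $a\in{\mathcal A}$ constitute exactly one $\Delta$-orbit in $\tilde{\mathcal A}$, the projection $\pi_D$ identifies ${\mathcal A}$ with the quotient $\tilde{\mathcal A}/\Delta$; and since ${\mathcal A}$ is unfolding-symmetry admissible, $\tilde{\mathcal A}$ is $G$-invariant, so $G\supset\Delta$ acts on $\tilde{\mathcal A}$ with only finitely many orbits by Theorem~\ref{theorem:finiteorbits}.

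First I would make the correspondence explicit. Define $\Phi$ from the set of unfolding-symmetry classes in ${\mathcal A}$ to the set of $G$-orbits in $\tilde{\mathcal A}$ by sending the class of $a$ to the orbit $G\cdot\tilde a$ of any lift $\tilde a$. I would verify that $\Phi$ is well defined: replacing the lift $\tilde a$ by another lift moves it within $\Delta\cdot\tilde a\subset G\cdot\tilde a$, leaving the orbit unchanged; and if $a$ and $b$ are unfolding-symmetric then some lifts of $a$ and $b$ share a $G$-orbit, whence (again because altering a lift only moves it within $\Delta\subset G$) every lift of $b$ lies in $G\cdot\tilde a$. These same two remarks give injectivity of $\Phi$, while surjectivity is immediate because each $G$-orbit contains some $\tilde a\in\tilde{\mathcal A}$, which is a lift of $\pi_D(\tilde a)\in{\mathcal A}$. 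Hence $\Phi$ is a bijection.

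With the bijection established, the proposition becomes a matter of translation. By hypothesis $\tilde\sigma_1,\dots,\tilde\sigma_n$ meet every $G$-orbit exactly once, and by construction $\Phi([\sigma_i])=G\cdot\tilde\sigma_i$ where $\sigma_i=\pi_D(\tilde\sigma_i)$. Surjectivity of $\Phi$ then shows that each unfolding-symmetry class equals some $[\sigma_i]$ (completeness), while injectivity of $\Phi$ together with the distinctness of the orbits $G\cdot\tilde\sigma_i$ shows that $[\sigma_i]=[\sigma_j]$ forces $i=j$ (no repeats). I expect the only genuine obstacle to be the verification that unfolding-symmetry is lift-independent and that $\tilde{\mathcal A}$ is $G$-invariant; both collapse to the single input $\Delta\subset G$ together with admissibility, after which the orbit bookkeeping is routine.
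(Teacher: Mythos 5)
Your proposal is correct and takes essentially the same approach as the paper: the paper's (one-line) proof simply asserts that $\pi_D$ carries $\Aff_\pm(\tilde D)$-orbits in $\tilde{\mathcal A}$ to unfolding-symmetry classes in ${\mathcal A}$, citing exactly the two facts your argument rests on --- regularity of the covering (so all lifts of an object form a single $\Delta$-orbit) and the containment $\Delta \subset \Aff_\pm(\tilde D)$. Your bijection $\Phi$ is just this correspondence made explicit, with the well-definedness, injectivity, and surjectivity checks spelled out rather than declared ``clear from the definition.''
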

\begin{proof}
A more sophisticated way to say this result is that the covering map $\pi_D$ sends $\Aff_\pm(\tilde D)$-orbits in $\tilde {\mathcal A}$ to unfolding-symmetry classes in ${\mathcal A}$. This is clear from the definition and because $\pi_D:\tilde D \to D$ is a regular covering
with deck group contained in $\Aff_\pm(\tilde D)$.
\end{proof}

\section{Regular polygon decompositions of cone surfaces}
\label{sect:decompositions}

We briefly digress here to discuss a result that holds in great generality.  The reader only interested in the results of this paper can take the surface $\tilde D$ below to be the unfolding of the dodecahedron.  Nevertheless, the general result will play a fundamental role in a subsequent paper~\cite{Aulicino} of the second-named author concerning automorphism groups of regular mappings.

\subsection{Regular polygon decompositions}
Let $D$ be a connected Euclidean cone surface with a choice of a singular set $\Sigma$. For each $n$, we say that a collection $\cP$ of polygons in $D$ is a {\em regular $n$-gon decomposition} if the polygons in $\cP$ are regular $n$-gons, have disjoint interiors, cover $D$, meet edge-to-edge, and the collection of vertices of polygons in $\cP$ coincides with $\Sigma$. For example, the surface of the dodecahedron with its natural Euclidean cone structure has a regular pentagon decomposition consisting of $12$ pentagons.

\subsection{The translation surfaces $\Pi_n$}

There is a family of translation surfaces $\Pi_n$ with regular $n$-gon decompositions which are particularly important to us. If $n$ is even, we define $\Pi_n$ to be a regular $n$-gon with two horizontal sides with opposite edges glued together by translation. If $n$ is odd, we define $\Pi_n$ to be formed from gluing two regular $n$-gons, one with a bottom horizontal side and one with a top horizontal side, and with each edge of the  first polygon glued to the parallel edge of the second polygon. For example, the surfaces $\Pi_3$ and $\Pi_4$, and $\Pi_6$ are the rhombic, square, and hexagonal tori respectively. Figure \ref{fig:pi} shows two higher-genus examples.

\begin{figure}
\begin{center}
\includegraphics[height=1.8in]{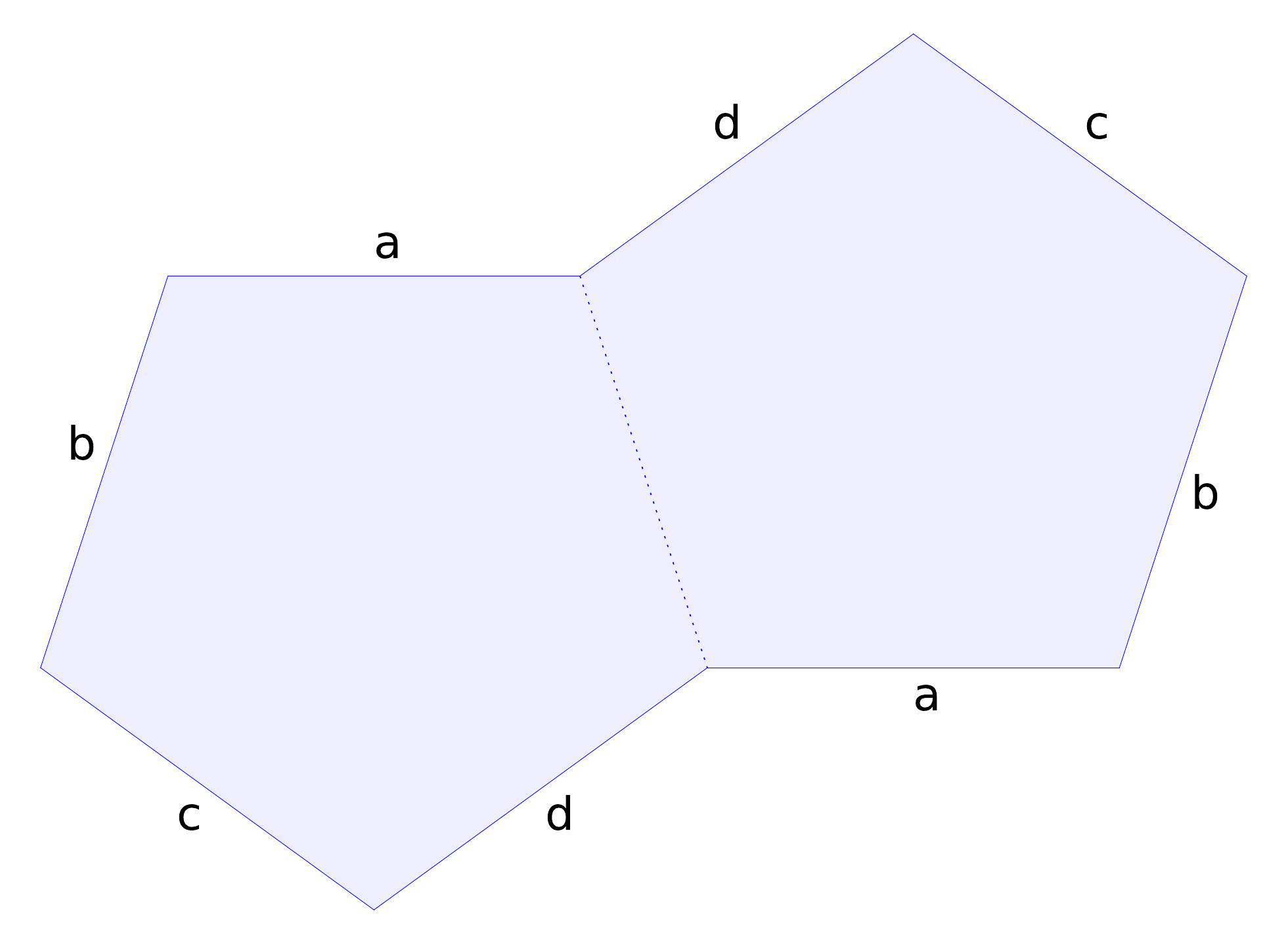}
\hspace{1in}
\includegraphics[height=1.8in]{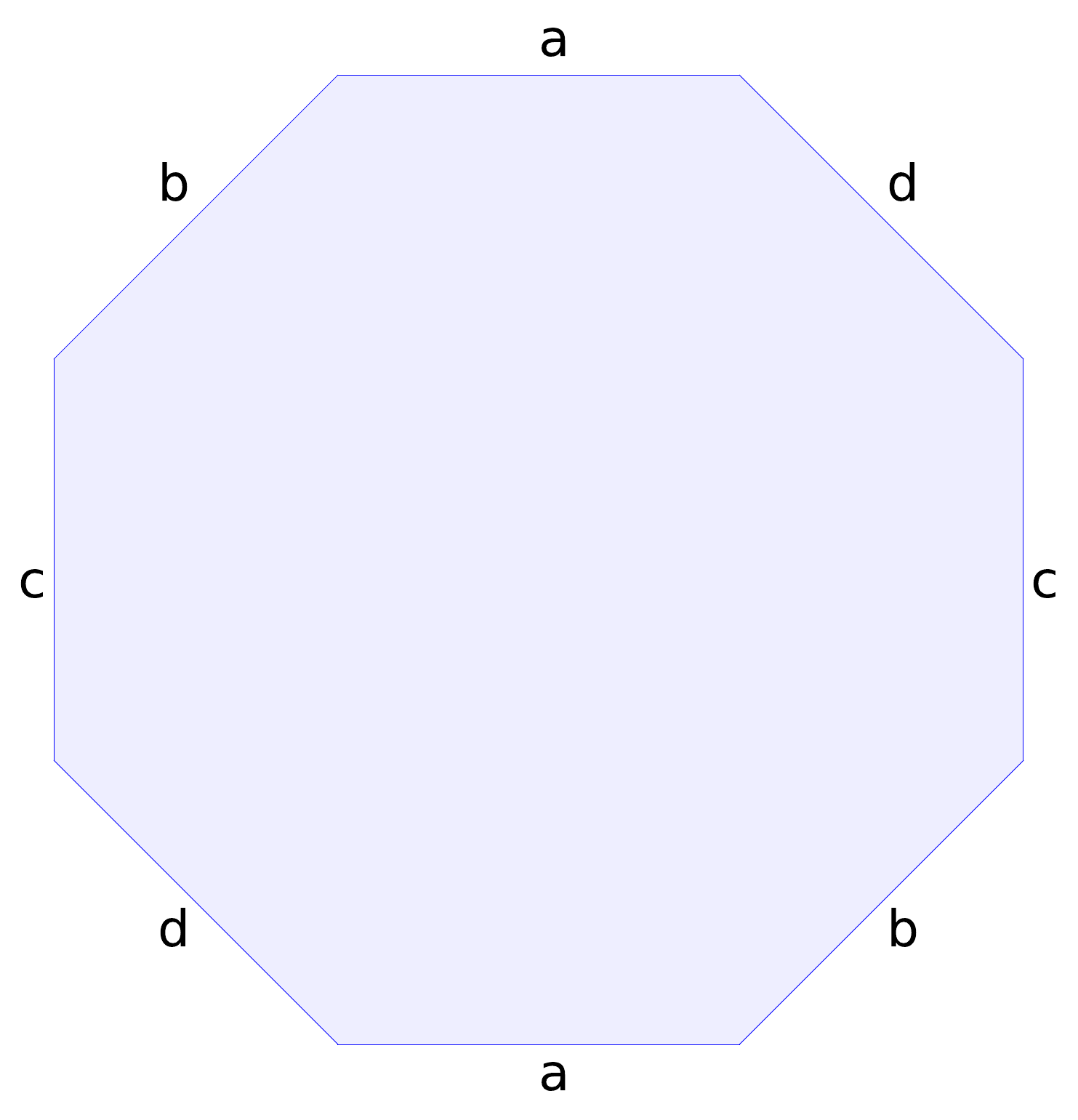}
\end{center}
\caption{The translation surfaces $\Pi_5$ and $\Pi_8$.}
\label{fig:pi}
\end{figure}

The surfaces $\Pi_n$ were first studied by Veech in \cite{Veech89}. It can be observed:

\begin{proposition}
For any $n \geq 3$, the surface $\Pi_n$ lies in the stratum $\cH(n-3)$ if $n$ is odd,
lies in $\cH(\frac{n-4}{2})$ if $n$ is a multiple of four, and lies in $\cH(\frac{n-6}{4},\frac{n-6}{4})$ if $n \equiv 2 \pmod{4}$. 
In particular, $\Pi_n$ is a torus if and only if $n \in \{3,4,6\}$.
\end{proposition}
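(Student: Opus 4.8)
The plan is to read off the singularities of $\Pi_n$ directly from the edge gluings, since the stratum is governed entirely by the cone angles at the identified vertices. Recall from \S\ref{sect:complex} that on a translation surface a cone point of angle $2\pi(d+1)$ is a zero of order $d$ of the holomorphic $1$-form, and that the orders satisfy $\sum_i d_i = 2g-2$. Each vertex of the defining polygon(s) contributes its interior angle $\frac{(n-2)\pi}{n}$, so a singularity obtained by gluing $v$ vertices together has cone angle $v\cdot\frac{(n-2)\pi}{n}$. The total cone angle is $\frac{(n-2)\pi}{n}$ times the total number of vertices regardless of the identifications, so the only real content is to determine the partition of the vertex set into gluing-classes; from the sizes of these classes the stratum and genus follow immediately.

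First I would treat the even case. Label the vertices of the single $n$-gon $v_0,\dots,v_{n-1}$ cyclically. Because a regular $n$-gon with $n$ even is centrally symmetric, the edge $\overline{v_k v_{k+1}}$ and its opposite edge $\overline{v_{k+n/2}v_{k+n/2+1}}$ are anti-parallel, so the translation carrying one onto the other reverses traversal and identifies $v_k \sim v_{k+n/2+1}$ and $v_{k+1}\sim v_{k+n/2}$ (indices mod $n$). Both relations generate the same cyclic subgroup $\langle \tfrac{n}{2}+1\rangle = \langle \tfrac{n}{2}-1\rangle$ of $\bZ/n\bZ$, so the number of vertex classes is its index $\gcd(n,\tfrac{n}{2}+1)$. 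Writing $n=2m$, the Euclidean algorithm gives $\gcd(2m,m+1)=\gcd(m-1,2)$, which is $1$ when $m$ is even ($n\equiv 0\pmod 4$) and $2$ when $m$ is odd ($n\equiv 2\pmod 4$). Hence for $n\equiv 0\pmod 4$ there is a single cone point of angle $(n-2)\pi$, giving $\cH(\frac{n-4}{2})$, and for $n\equiv 2\pmod 4$ there are two cone points each of angle $\frac{(n-2)\pi}{2}$, giving $\cH(\frac{n-6}{4},\frac{n-6}{4})$.

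The odd case is where I expect the main difficulty, since it requires pinning down which edge of the second polygon is glued to each edge of the first. Writing $P_1$ for the polygon with a horizontal bottom edge and $P_2$ for the one with a horizontal top edge, and computing edge directions for regularly placed $n$-gons, one finds that edge $e_k$ of $P_1$ points in direction $\frac{2k\pi}{n}$ while edge $f_j$ of $P_2$ points in direction $\pi+\frac{2j\pi}{n}$; these are parallel as lines precisely when $2(k-j)\equiv 0\pmod n$, and since $n$ is odd this forces $j=k$. The matched edges $e_k$ and $f_k$ are anti-parallel, so the translation gluing identifies $a_k\sim b_{k+1}$ and $a_{k+1}\sim b_k$, where $a_0,\dots,a_{n-1}$ and $b_0,\dots,b_{n-1}$ are the vertices of $P_1$ and $P_2$. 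Chasing these relations produces the single chain $a_0\to b_1\to a_2\to b_3\to\cdots$, whose $a$-indices run through $0,2,4,\dots$; because $\gcd(2,n)=1$ these exhaust $\bZ/n\bZ$, so all $2n$ vertices lie in one class. This yields a single cone point of angle $2n\cdot\frac{(n-2)\pi}{n}=2\pi(n-2)$, i.e.\ the stratum $\cH(n-3)$.

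Finally, the ``in particular'' statement falls out of the computed orders: using $\sum_i d_i = 2g-2$, a surface in $\cH(d)$ or $\cH(d,d)$ has genus $1$ exactly when $d=0$, and setting $n-3=0$, $\frac{n-4}{2}=0$, or $\frac{n-6}{4}=0$ gives precisely $n\in\{3,4,6\}$. The one point demanding sustained care is the orientation bookkeeping in the gluing maps --- getting the ``$+1$'' shifts in the vertex identifications correct --- because an error there changes the generated subgroup and hence the class count; the explicit direction computations are exactly what render these shifts unambiguous.
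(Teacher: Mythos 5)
Your proof is correct, and it supplies something the paper itself does not: the paper states this proposition as a bare observation (``It can be observed:'') and, a few lines later, points to the literature --- Veech's original paper and \cite{Hooper13}, where $\Pi_n$ appears as $(Y_{2,n},\eta_{2,n})$ resp.\ $(Y^e_{2,n},\eta^e_{2,n})$ --- rather than giving any argument. Your route is the direct elementary verification: compute the vertex identification classes forced by the translation gluings, sum the interior angles $\frac{(n-2)\pi}{n}$ in each class, and read off the orders of the zeros. The details all check out. In the even case, central symmetry makes opposite edges anti-parallel, so the gluing translation identifies $v_k\sim v_{k+n/2+1}$, the classes are the cosets of $\langle \frac{n}{2}+1\rangle\le\bZ/n\bZ$, and your computation $\gcd(2m,m+1)=\gcd(m-1,2)$ correctly separates $n\equiv 0$ from $n\equiv 2 \pmod 4$ (one class of angle $(n-2)\pi$ versus two classes of angle $\frac{(n-2)\pi}{2}$, both of which are multiples of $2\pi$, as they must be on a translation surface). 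In the odd case, your parallelism computation $2(k-j)\equiv 0 \pmod n \Rightarrow j=k$ pins down the edge pairing, the anti-parallelism gives $a_k\sim b_{k+1}$, $a_{k+1}\sim b_k$, and since $2$ is invertible mod $n$ the resulting chain sweeps up all $2n$ vertices into a single class of angle $2(n-2)\pi$, i.e.\ $\cH(n-3)$. The ``in particular'' clause then follows from $\sum_i d_i=2g-2$ with $d_i\ge 0$ exactly as you say. What your approach buys is a self-contained proof readable without Veech's or Hooper's machinery; what the paper's citation buys is brevity and consistency with the identifications used later for the Veech group computation (Proposition 4.4), which genuinely does require the cited work. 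Sanity checks against the paper's own data ($\Pi_5\in\cH(2)$, $\Pi_8\in\cH(2)$, $\Pi_3,\Pi_4,\Pi_6$ tori) all agree with your formulas.
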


\begin{proposition}
For any $n \geq 3$, the surface $\Pi_n$ is primitive. In particular,
$\Pi_n$ is arithmetic only when $n \in \{3,4,6\}$.
\end{proposition}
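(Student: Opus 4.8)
The plan is to treat the ``in particular'' clause as a consequence of primitivity and then concentrate on primitivity itself. For the first part, suppose $\Pi_n$ is primitive. If in addition $\Pi_n$ were arithmetic, then as a lattice surface it would be square-tiled, and by Theorem~\ref{thm:square tiled coverings} it would cover a torus $\bC/\Lambda$ with one marked point. A primitive surface admitting a covering onto a genus-one surface must itself have genus one, so $\Pi_n$ would be a torus, and by the previous proposition this happens exactly when $n \in \{3,4,6\}$; conversely those three surfaces are tori and hence arithmetic. Thus, \emph{given} primitivity, arithmeticity is equivalent to $n \in \{3,4,6\}$. (Equivalently, one may compute directly that the trace field of $V(\Pi_n)$ is $\bQ(\cos \tfrac{2\pi}{n})$, of degree $\tfrac12\varphi(n)$, which equals $\bQ$ precisely when $\varphi(n)=2$, i.e.\ $n \in \{3,4,6\}$.)

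For primitivity, suppose toward a contradiction that $\Pi_n$ is imprimitive, so that there is a translation covering $\pi\colon \Pi_n \to P$ of degree $d>1$ onto a primitive surface $P$ of genus $g' < g$, where $g$ is the genus of $\Pi_n$ computed in the previous proposition. First I would pin down the combinatorics of the singularities: since $\pi$ is a translation covering we have $\pi^{-1}(\Sigma')=\Sigma$, so the singular set of $P$ is exactly the $\pi$-image of $\Sigma$. In the one-singularity cases ($n$ odd, or $n \equiv 0 \pmod 4$) this forces $P$ to have a single cone point with a single totally ramified preimage of local degree $d$, and then Riemann--Hurwitz collapses to one arithmetic relation of the form $2g'-1 = M/d$, where $M$ is the order datum of $\Pi_n$ (namely $M=n-2$ for odd $n$). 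This restricts $(d,g')$ to the finitely many solutions with $d \mid M$.

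Next I would eliminate these solutions. The case $g'=1$ is the torus/arithmetic case already excluded above. For $g' \geq 2$, I would use that, $\pi$ being a translation covering, Gutkin--Judge makes $V_\pm(\Pi_n)$ and $V_\pm(P)$ commensurate, so they share the (invariant) trace field $K=\bQ(\cos\tfrac{2\pi}{n})$ of degree $\tfrac12\varphi(n)$; since the trace-field degree of a lattice surface is at most its genus, this yields $g' \geq \tfrac12\varphi(n)$. When $\Pi_n$ is \emph{algebraically primitive} --- which the genus and trace-field formulas show happens exactly for $n$ prime, $n$ a power of two, and $n=2p$ with $p$ an odd prime --- one has $\tfrac12\varphi(n)=g$, and this bound already forces $g'=g$, contradicting $d>1$.

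The hard part will be the non-algebraically-primitive cases (the smallest being $n=9$ and $n=12$), where $\tfrac12\varphi(n)<g$ and the trace-field bound alone does not contradict an intermediate $g'$. Here one must genuinely combine the divisibility constraint $2g'-1=M/d$ with the lower bound $g' \geq \tfrac12\varphi(n)$ and check that no common solution survives with $g'<g$; for instance for $n=9$ one has $M=7$ prime, so $g'\in\{1,4\}$, and the troublesome value $\tfrac12\varphi(9)=3$ is simply not of Riemann--Hurwitz type. The two-singularity family $n \equiv 2 \pmod 4$ is the most delicate, since $\pi$ may there branch over either cone point and the Riemann--Hurwitz bookkeeping has more cases. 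I expect the cleanest uniform closure is not a bare inequality but Möller's structural picture: $\omega$ is an eigenform for real multiplication by $K$, a covering $\Pi_n \to P$ realizes the $K$-eigenform factor inside $\mathrm{Jac}(P)$, and this must be compatible with the order-$n$ rotation automorphism of $\Pi_n$, which descends to $P$ by the uniqueness in Theorem~\ref{thm:unique}; tracking that rotation through the covering should force $d=1$.
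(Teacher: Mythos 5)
The paper never gives an internal proof of this proposition: immediately after Proposition~\ref{prop:veech group} it states that ``these results follow for instance from work in \cite{Hooper13}'' (and \cite{Veech89}), where $\Pi_n$ appears as $(Y_{2,n},\eta_{2,n})$, resp.\ $(Y^e_{2,n},\eta^e_{2,n})$. So your attempt cannot be matched against an in-paper argument; it has to stand on its own, and as written it does not. Your reduction of the ``in particular'' clause is fine (primitive $+$ square-tiled forces genus one via Theorem~\ref{thm:square tiled coverings}, and the previous proposition pins down $n \in \{3,4,6\}$), and the algebraically primitive cases do follow from the trace-field bound. But the core of the proposal has two genuine gaps. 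First, the paper defines imprimitivity following M\"oller: a covering is only required to satisfy $\pi^\ast(\eta)=\omega$; it is \emph{not} a translation covering, so your starting identity $\pi^{-1}(\Sigma')=\Sigma$ (and with it the exclusion of branching over regular points of $P$, hence the relation $2g'-1=M/d$) is an unjustified assumption. In the one-zero cases this can be repaired --- a branch point over a regular value is itself a zero of $\omega$, and when $g'\geq 2$ the zeros of $\eta$ must pull back into $\Sigma$ --- but you do not make this argument, and in the two-zero family $n \equiv 2 \pmod 4$ the repair requires real case analysis of how the two zeros distribute over $P$.

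Second, and decisively, the endgame is conjectural rather than proved. For the infinitely many non-algebraically-primitive $n$ you verify only $n=9$ and then write that M\"oller's real-multiplication picture together with the order-$n$ rotation ``should force $d=1$''; that is a research plan, not a proof, and it is precisely where the difficulty lives (e.g.\ for $n \equiv 2 \pmod 4$ neither the divisibility constraint nor the bound $g' \geq \tfrac12\varphi(n)$ alone closes the argument). Separately, note that several inputs you treat as ambient facts are external to the paper and would need citation: that the trace field of a lattice surface has degree at most its genus, that the trace field is a commensurability invariant for Veech groups of lattice surfaces (for general Fuchsian groups only the invariant trace field is), and the identification of the trace field of $V(\Pi_n)$ with $\bQ(\cos\tfrac{2\pi}{n})$. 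The paper sidesteps all of this by invoking \cite{Veech89, Hooper13}, where the structure of these surfaces is worked out; if you want a self-contained proof you must either complete the Riemann--Hurwitz-plus-trace-field analysis in all congruence classes of $n$, or genuinely carry out the M\"oller-type argument you sketch, not merely gesture at it.
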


\begin{proposition}[{\cite[\S 5]{Veech89}}]
\label{prop:veech group}
If $n \geq 5$ is odd, then $V(\Pi_n)=\langle R,T \rangle$ and $V_\pm(\Pi_n)=\langle R,T,J\rangle$ where 
$$R = \left(\begin{array}{rr}
\cos \frac{\pi}{n} & -\sin \frac{\pi}{n} \\
\sin \frac{\pi}{n} & \cos \frac{\pi}{n}
\end{array}\right),
\quad
T = \left(\begin{array}{rr}
1 & 2 \cot \frac{\pi}{n} \\
0 & 1
\end{array}\right),
\quad
J = \left(\begin{array}{rr}
1 & 0 \\
0 & -1
\end{array}\right).
$$
If $n \geq 8$ is even, then $V(\Pi_n)=\langle R^2,T \rangle$ and $V_\pm(\Pi_n)=\langle R^2,T,J\rangle$ where $R$, $T$ and $J$ are as above.
\end{proposition}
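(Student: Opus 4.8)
The plan is to split the statement into the routine inclusion, that the listed matrices are derivatives of affine automorphisms, and the substantive inclusion, that the Veech group contains nothing else. First I would produce the affine automorphisms directly from the symmetries of the polygon model of $\Pi_n$. The vertical reflection of each regular $n$-gon respects the edge identifications, giving an orientation-reversing affine automorphism with derivative $J$, so $J \in V_\pm(\Pi_n)$. For the rotation, when $n$ is odd the isometry of $\Pi_n$ that rotates by $\frac{\pi}{n}$ and interchanges the two $n$-gons has derivative $R$; when $n$ is even the single-polygon model admits only the rotation through $\frac{2\pi}{n}$, with derivative $R^2$ (rotation by $\frac{\pi}{n}$ carries vertices to edge midpoints and is not a symmetry), which accounts for the dichotomy in the statement. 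For the parabolic, I would verify that the horizontal direction is completely periodic: $\Pi_n$ decomposes into finitely many horizontal cylinders whose moduli are commensurable. A suitable product of Dehn twists in these cylinders is then an affine automorphism with horizontal parabolic derivative, and computing the shear of the primitive such element gives $2\cot\frac{\pi}{n}$, so $T \in V(\Pi_n)$.

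The next step is to identify the group $G = \langle R, T\rangle$ (for odd $n$) or $G = \langle R^2, T\rangle$ (for even $n$) abstractly. I would exhibit an explicit fundamental domain for its action on $\bH^2$, built from the elliptic fixed point of the image of $R$ or $R^2$ in $\PSL(2,\bR)$ and the parabolic fixed point of $T$, together with an ideal vertex. For odd $n$ one finds the $(2,n,\infty)$ triangle group, the order-two vertex arising from a short word in $R$ and $T$; for even $n$ the fundamental domain has an elliptic vertex of order $\frac{n}{2}$ coming from $R^2$ and is computed analogously. In either case $G$ is a non-cocompact Fuchsian lattice, so by the Veech Dichotomy (Theorem~\ref{theorem:veech}) $\Pi_n$ is a lattice surface, and $V(\Pi_n)$ is itself a lattice containing $G$, necessarily with finite index.

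The main obstacle is upgrading this to the equality $V(\Pi_n) = G$, that is, ruling out a strictly larger Veech group. One route is to invoke maximality: for the relevant $n$ these triangle groups are non-arithmetic, so their commensurators are discrete, and the classification of containments among finite-covolume Fuchsian groups shows $G$ is not properly contained in any commensurable lattice, forcing $V(\Pi_n) = G$ outside a checkable finite list of exceptional inclusions. The more self-contained route I would prefer is an intrinsic covolume count: the $V(\Pi_n)$-orbits of completely periodic directions together with their cusp widths can be read off directly from the cylinder decompositions of $\Pi_n$, and matching the resulting cusps and elliptic points against the fundamental domain for $G$ shows the two covolumes coincide; since $G \subseteq V(\Pi_n)$ are lattices of equal covolume they are equal. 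Finally, the full-group statements follow at once: since $\det J = -1$ we have $J \in V_\pm(\Pi_n)\setminus V(\Pi_n)$, and the determinant-sign homomorphism gives $[V_\pm(\Pi_n):V(\Pi_n)] \le 2$, so $V_\pm(\Pi_n) = V(\Pi_n) \sqcup J\,V(\Pi_n) = \langle R, T, J\rangle$ (resp. $\langle R^2, T, J\rangle$).
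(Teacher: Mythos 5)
Your proposal splits into two halves, and the first half is fine: producing the reflection, the rotation (by $\pi/n$ interchanging the two polygons when $n$ is odd, by $2\pi/n$ when $n$ is even), and the parabolic from the commensurable horizontal cylinders gives the containments $\langle R,T\rangle\subseteq V(\Pi_n)$, resp.\ $\langle R^2,T\rangle\subseteq V(\Pi_n)$, and identifying these groups as the $(2,n,\infty)$, resp.\ $(\frac{n}{2},\infty,\infty)$, triangle groups is correct, as is the final reduction of $V_\pm$ to $V$ via the determinant sign. Note, for calibration, that the paper itself does not prove this proposition at all: it cites Veech~\cite[\S 5]{Veech89} and Hooper~\cite{Hooper13}, where $\Pi_n$ appears as $(Y_{2,n},\eta_{2,n})$ ($n$ odd) and $(Y^e_{2,n},\eta^e_{2,n})$ ($n$ even). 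So the burden of the hard step falls entirely on your argument, and that is where there is a genuine gap.

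The gap is the step ``$V(\Pi_n)$ is no larger,'' and it is fatal precisely in the even case. Your maximality route does work for odd $n$: by Singerman's classification of inclusions among Fuchsian triangle groups, $(2,n,\infty)$ with $n\geq 5$ odd matches none of the left-hand sides, hence is maximal, and any lattice containing a triangle group with finite index is again a triangle group, so $V(\Pi_n)=\langle R,T\rangle$ follows. But $\left(\frac{n}{2},\infty,\infty\right)$ has the form $(s,s,t)$ with $s=\infty$, $t=\frac{n}{2}$, and $(s,s,t)\lhd(2,s,2t)$ is a \emph{normal, index-two inclusion valid for every such signature}; here it reads $\left(\frac{n}{2},\infty,\infty\right)\lhd(2,n,\infty)$. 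This belongs to one of the infinite families in the classification, not to the finite list of sporadic inclusions, so your hedge ``outside a checkable finite list of exceptional inclusions'' does not cover it: for every even $n$ your argument cannot exclude $V(\Pi_n)=(2,n,\infty)$, i.e.\ cannot exclude that rotation by $\pi/n$ is a hidden affine symmetry. (Non-arithmeticity does not help; a discrete commensurator is perfectly compatible with an index-two overgroup.) The question is not vacuous, and settling it is the real content of the even case: one must show, for example, that the edge-parallel and vertex-diagonal cylinder decompositions of the even $n$-gon surface are not exchanged by any affine map, which one checks by comparing affine invariants of the two decompositions (circumference or modulus ratios), or equivalently that the Teichm\"uller curve has two cusps rather than one. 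Your second, ``covolume count'' route founders on the same point and is moreover circular as stated: the ``$V(\Pi_n)$-orbits of periodic directions together with their cusp widths'' cannot be read off without already knowing $V(\Pi_n)$; what the cylinder decompositions give you are $G$-orbits and widths in $G$. To make it rigorous you must show that distinct $G$-cusps remain distinct in $V(\Pi_n)$ and that no new elliptic points appear, and then invoke Riemann--Hurwitz; that missing comparison is exactly what Veech's computation in \cite[\S 5]{Veech89}, and Hooper's identification in \cite{Hooper13}, supply.
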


These results follow for instance from work in \cite{Hooper13} where $\Pi_n$ appears as the surface $(Y_{2,n},\eta_{2,n})$ when $n$ is odd and as $(Y^e_{2,n},\eta^e_{2,n})$ when $n$ is even. 

The importance of $\Pi_n$ to us is that any translation surface with a regular $n$-gon decomposition is a cover of $\Pi_n$:

\begin{proposition}
\label{prop: natural covering}
Fix $n \geq 3$. If $S$ is a translation surface with a regular $n$-gon decomposition $\cP$, then there is a translation covering $\pi:S \to \Pi_n$ whose derivative is a dilation of the plane so that for each $P \in \cP$ the image $\pi(P)$ lies in the $n$-gon decomposition of $\Pi_n$. 
\end{proposition}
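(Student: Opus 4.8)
The plan is to build $\pi$ by hand, mapping each polygon of $\cP$ onto a polygon of the decomposition of $\Pi_n$, and then to verify that these pieces glue into a single branched covering whose derivative is a global dilation. I would first record two rigidity facts about $\cP$. Since edge identifications on a translation surface are translations, they preserve edge lengths; as $S$ is connected and the polygons meet edge-to-edge, every polygon of $\cP$ is a regular $n$-gon of one common side length $s$. Let $A=\tfrac{s_0}{s}I$ be the dilation taking $s$ to the side length $s_0$ of the polygons of $\Pi_n$; this scalar matrix will be $D\pi$. Next I would pin down the possible angular placements. If $P,P'\in\cP$ share an edge $e$, then $e$ is traversed in opposite directions along $\partial P$ and $\partial P'$, so the angular placement of a polygon (its set of edge directions, recorded modulo the $n$-fold rotational symmetry, i.e.\ modulo $\tfrac{2\pi}{n}$) changes by $\pi$ when crossing $e$. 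Because $\pi\equiv 0\pmod{\tfrac{2\pi}{n}}$ for even $n$ and $\pi\equiv \tfrac{\pi}{n}\pmod{\tfrac{2\pi}{n}}$ for odd $n$, the polygons of $S$ carry a single placement when $n$ is even and exactly two placements (``up'' and ``down'') when $n$ is odd, matching the one, respectively two, placements present in $\Pi_n$. After rotating $S$ I may assume these placements agree with those of $\Pi_n$.

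With this in hand I would define $\pi$ by propagation. Map a chosen polygon $P_0$ by $A$ followed by a translation onto the polygon of $\Pi_n$ of the same placement, and extend across each edge: in both $S$ and $\Pi_n$ the two sides of an edge carry antiparallel directions $\psi$ and $\psi+\pi$, and a placed regular $n$-gon has a unique edge in each direction that occurs, so the map on the neighbouring polygon is forced and sends $e$ to the edge of $\Pi_n$ to which its image is glued. Here one checks the elementary fact that in $\Pi_n$ an edge of direction $\psi$ is glued exactly to the unique parallel edge, of direction $\psi+\pi$: the opposite edge of the single polygon when $n$ is even, the parallel edge of the other polygon when $n$ is odd. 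Propagating over the connected dual graph of $\cP$ produces a candidate map defined polygon by polygon with constant derivative $A$ and with $\pi(P)$ lying in the decomposition of $\Pi_n$ for every $P$.

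I expect the main obstacle to be proving that this propagation is well defined, i.e.\ independent of the path chosen through the dual graph; the potential inconsistency is a translation-valued monodromy on loops. On loops lying in the complement of the vertices there is nothing to check, since parallel transport on a translation surface is trivial, so the placement and the developed image return to themselves while the construction simply scales the holonomy of $S$ by $A$ to the holonomy of the image loop in $\Pi_n$. The essential case is a small loop around a vertex $v$, where consistency reduces to a local count: the number $m$ of polygons incident to $v$ must be a multiple of the number $m_0$ of polygons incident to the corresponding vertex of $\Pi_n$. This is exactly where the hypothesis that $S$ is a translation surface enters. The cone angle at $v$ equals $m\tfrac{(n-2)\pi}{n}$ and must lie in $2\pi\bZ$; since $m_0$, the number of polygons meeting at a vertex of $\Pi_n$, is precisely the least positive integer for which $m_0\tfrac{(n-2)\pi}{n}\in 2\pi\bZ$, the admissible values of $m$ form exactly the subgroup $m_0\bZ$, so $m_0\mid m$ automatically. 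The local model of $\pi$ near $v$ is then the branched cover $w\mapsto w^{m/m_0}$, which closes up consistently and is branched only over the vertex.

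Assembling the local and global pieces shows that $\pi$ is a well-defined branched covering, branched over the vertices (the singular set $\Sigma_{\Pi_n}$) with $\pi^{-1}(\Sigma_{\Pi_n})$ equal to the vertices of $\cP$, and with derivative the dilation $A$; after rescaling by $A$ it is a genuine translation covering, which is the assertion. I would emphasize that the two easy reductions (congruence of the polygons and the placement dichotomy) are routine local computations, and that the entire weight of the argument sits in the vertex count of the previous paragraph, where the translation-surface hypothesis forces the combinatorics around each cone point to be an exact multiple of the combinatorics around the cone point of $\Pi_n$.
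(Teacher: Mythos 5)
Your construction is correct and is essentially the paper's own proof: the paper disposes of this proposition in one line, saying that $\pi$ is obtained by ``analytically continuing a dilation carrying some $P \in \cP$ to a regular polygon in $\Pi_n$,'' and your edge-by-edge propagation of the map $x \mapsto Ax+b$ is exactly that continuation, with the side-length rigidity, the placement dichotomy, and the well-definedness checks written out. The one soft spot is the claim that for loops avoiding vertices ``there is nothing to check'' (triviality of parallel transport kills only the rotational part of the monodromy, not the translational part you yourself flagged); but this is repaired instantly by a fact you already established, namely that the map on each polygon is \emph{forced} -- it is the unique dilation-plus-translation onto the placement-matched polygon of $\Pi_n$'s decomposition -- so the continuation is path-independent on all loops at once, and your vertex count $m_0 \mid m$ then comes out as a consequence rather than a condition needing verification.
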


\begin{proof}
The map $\pi$ can be obtained by analytically continuing a dilation carrying some $P \in \cP$ to 
a regular polygon in $\Pi_n$. 
\end{proof}

\subsection{Platonic surfaces and unfoldings}

Fix a connected Euclidean cone surface with a regular $n$-gon decomposition $\cP$. 
A {\em flag} of $D$ is a triple $(P, e, v)$ where $P \in \cP$, $e$ is an edge of $P$, and $v$ is an endpoint of $e$.
We call $D$ a \emph{Platonic surface} if the group $\Isom(D)$
acts transitively on flags.

Note that if a Euclidean cone surface $D$ has a regular $n$-gon decomposition $\cP$,
and $\tilde D$ is a cover of $\cP$, then the collection $\tilde \cP$ of lifts of polygons in $\cP$
is a regular $n$-gon decomposition of $\tilde D$. We call $\tilde \cP$ the \emph{lifted $n$-gon decomposition}.

\begin{theorem}
\label{NormCov}
Let $D$ be a Platonic surface with regular $n$-gon decomposition $\cP$. Let $\tilde D$ be the unfolding of $D$ and let $\tilde \cP$ be the lifted $n$-gon decomposition. Then $\tilde D$ is also a Platonic surface,
and the covering $\pi_\Pi:\tilde D \to \Pi_n$ guaranteed by Proposition \ref{prop: natural covering} is regular.
\end{theorem}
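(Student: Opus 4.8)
The statement has two parts: first that $\tilde D$ is Platonic, i.e.\ that $\Isom(\tilde D)$ acts transitively on the flags of the lifted decomposition $\tilde \cP$; and second that $\pi_\Pi : \tilde D \to \Pi_n$ is a regular (normal) covering. The plan is to prove the first part and then leverage it to deduce the second. Throughout I will work with flags $(\tilde P, \tilde e, \tilde v)$ of $\tilde \cP$ together with their images under $\pi_D$ and $\pi_\Pi$, using that both covering maps send polygons of the respective decompositions to polygons and hence induce maps on flags.

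For the first part I will produce a subgroup of $\Isom(\tilde D)$ acting transitively on flags, assembled from two sources. First, every isometry $g$ of $D$ lifts to an isometry $\tilde g$ of $\tilde D$: because $g$ acts on the rotational holonomy either trivially (as $\SO(2,\bR)$ is abelian) or by inversion when $g$ reverses orientation, the induced $g_\ast$ preserves $\ker PT$, so $g$ lifts along $\tilde D^\ast \to D^\ast$ to a local isometry that extends to $\tilde D$ and permutes $\tilde \cP$. Second, the deck group $\Delta \cong \bZ/k\bZ$ of $\pi_D$ acts by orientation-preserving isometries of $\tilde D$ preserving $\tilde \cP$. I then argue that $G = \langle \Delta, \{\tilde g\}\rangle$ is flag-transitive: given two flags of $\tilde D$, their $\pi_D$-images are flags of $D$, which are matched by some $g \in \Isom(D)$ (flag-transitivity of $D$), and lifting $g$ reduces the problem to two flags of $\tilde D$ lying over the same flag $(P,e,v)$ of $D$. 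The remaining claim is that $\Delta$ is transitive on the flags over $(P,e,v)$. This holds because each such flag is determined by its polygon $\tilde P$ (a single lift of $P$ carries a unique corner over $v$ and a unique edge over $e$), so these flags are in $\Delta$-equivariant bijection with the fibre $\pi_D^{-1}(c(P))$ over the center $c(P)$ of $P$; since $\pi_D$ is a regular cover, $\Delta$ is transitive on this fibre. This yields flag-transitivity of $G \subseteq \Isom(\tilde D)$, so $\tilde D$ is Platonic.

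For the second part I will show that the deck group of $\pi_\Pi$ is transitive on each fibre. Take flags $\tilde F_0, \tilde F_1$ of $\tilde D$ with $\pi_\Pi(\tilde F_0) = \pi_\Pi(\tilde F_1)$. By the first part, and since $\Isom(\tilde D)$ acts freely on flags, there is a unique isometry $\phi$ with $\phi(\tilde F_0)=\tilde F_1$. The key point is that $D\phi = I$: because $\pi_\Pi$ is an orientation-preserving dilation, the flag-preserving germ at $\tilde F_0$ covering the identity of $\Pi_n$ has derivative $I$, and there is exactly one isometry germ carrying the flag $\tilde F_0$ to $\tilde F_1$ — fixing the edge direction and sending the polygon sector to the polygon sector forces the derivative to be $I$ rather than the reflection — so $\phi$ agrees with that germ and $D\phi=I$. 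Consequently $\pi_\Pi\circ\phi$ and $\pi_\Pi$ are both translation coverings to $\Pi_n$ with the same dilation derivative and they agree near $\tilde F_0$; by uniqueness of the developing map they agree globally, so $\phi$ is a deck transformation of $\pi_\Pi$. Hence the deck group is transitive on flag-fibres, and therefore on fibres over regular points, proving $\pi_\Pi$ is regular.

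The main obstacle is the derivative computation in the second part: flag-transitivity of $\tilde D$ a priori only supplies an isometry with some rotational derivative, and one must see that it is in fact a translation automorphism commuting with $\pi_\Pi$. The crux is the orientation bookkeeping — that $\pi_\Pi$ being an orientation-preserving dilation pins the connecting germ's derivative to $I$ and not to a reflection — together with the rigidity statement that two translation coverings with equal dilation derivative agreeing on an open set must coincide. A secondary point needing care is the transitivity of $\Delta$ on flags over a fixed flag of $D$, which I intend to handle through the $\Delta$-equivariant identification of those flags with a fibre of the regular cover $\pi_D$ over a polygon center, thereby avoiding any delicate analysis of the ramification at the cone points.
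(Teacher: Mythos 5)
Your proof is correct and takes essentially the same route as the paper's: for flag-transitivity, project flags to $D$, lift a flag-matching isometry of $D$ through $\pi_D$ (using that isometries preserve $\ker PT$), and adjust by a deck transformation of the regular cover $\pi_D$; for regularity of $\pi_\Pi$, identify the isometries of $\tilde D$ permuting the flags over a fixed flag of $\Pi_n$ with the deck group of $\pi_\Pi$. You supply some details the paper leaves implicit — the inversion of $PT$ under orientation-reversing isometries, the $\Delta$-equivariant bijection of flags over a flag with a fiber, and the derivative-$I$/developing-map rigidity argument showing the connecting isometry really is a deck transformation of $\pi_\Pi$ — but the strategy is identical.
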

\begin{proof}
First we will show that $\tilde D$ equipped with the $n$-gon decomposition $\tilde \cP$ is Platonic.
Let $(\tilde P_1, \tilde e_1, \tilde v_1)$ and $(\tilde P_2, \tilde e_2, \tilde v_2)$ be flags on $\tilde D$. Let
$(P_1, e_1, v_1)$ and $(P_2, e_2, v_2)$ be their projections to $D$ under the covering map $\pi_D:\tilde D \to D$. Since $D$ is Platonic, there is a $g \in \Isom(D)$ carrying $(P_1, e_1, v_1)$ to $(P_2, e_2, v_2)$. Because $g$ is an isometry, we have $PT \circ g_\ast=PT$ where $PT$ is the parallel transport map of \eqref{eq:parallel transport} and $g_\ast$ is the induced action of $g$ on $H_1(D^\ast, \bZ)$. Then $g$ lifts to an isometry $\tilde g \in \Isom(\tilde D)$ satisfying $$\pi_D \circ \tilde g(\tilde P_1, \tilde e_1, \tilde v_1)=(P_2,e_2, v_2).$$ Since $\pi_D$ is a regular cover, there is a deck transformation $\delta:\tilde D \to \tilde D$ of the covering $\pi_D$ so that $\delta \circ \tilde g(\tilde P_1, \tilde e_1, \tilde v_1)=(\tilde P_2, \tilde e_2, \tilde v_2)$. The isometry $\delta \circ \tilde g$ proves $\tilde D$ is Platonic.

To see the covering $\pi_\Pi:\tilde D \to \Pi_n$ is regular, select a flag $(P,e,v)$ in $\Pi_n$. Consider the collection of flags of $\tilde D$ which are preimages of $(P,e,v)$. From the previous paragraph, $\Isom(\tilde D)$ acts transitively on these flags. This is precisely the deck group of the covering $\pi_\Pi$. \end{proof}

\begin{definition}
The \emph{rotation group} of $D$ is the subgroup $\Gamma_{Rot}(D) := \Isom^+(D)$ of orientation preserving isometries of the Platonic surface.
\end{definition}


\section{Weierstrass points and closed saddle connections}
\label{sect:weierstrass}

In this section, we use Weierstrass points (fixed points of hyperelliptic involutions) to show that there are no closed saddle connections on the non-dodecahedron Platonic solids.

We first observe that the surfaces $\Pi_p$ are all hyperelliptic.
If $p$ is even, the hyperelliptic involution rotates about the center of the single $p$-gon making up $\Pi_p$. If $p$ is odd, the hyperelliptic involution swaps the two $p$-gons.
Recall that a {\em Weierstrass point} is a fixed point of the hyperelliptic involution.

Let $D$ be a Platonic surface formed from $p$-gons, and let $\tilde D$ be the unfolding of $D$. Then we have coverings $\pi_D:\tilde D \to D$
and $\pi_\Pi:\tilde D \to \Pi_p$.  (See Diagram \ref{eq:diagram}.)

We call a point on $x \in D$ a {\em virtual-Weierstrass point} if 
$x$ lifts (via $\pi_D^{-1}$) to a point $\tilde x \in \tilde D$ whose image is a Weierstrass point
of $\Pi_p$. The vertices of $D$ are virtual-Weierstrass points
unless $p \equiv 2 \pmod{4}$. The midpoints of edges are always regular virtual-Weierstrass points, while the centers of polygons are regular virtual-Weierstrass points if and only if $p$ is even.

We call a saddle connection $\sigma$ in $D$ a {\em virtual-Weierstrass saddle connection} if
$\sigma$ lifts (via $\pi_D^{-1}$) to a saddle connection $\tilde \sigma$ in $\tilde D$ whose image (under $\pi_{\Pi}$) in $\Pi_p$ is fixed by the hyperelliptic involution. Observe that
a saddle connection $\sigma$ in $D$ is a virtual-Weierstrass saddle connection
if and only if the midpoint of $\sigma$ is a regular virtual-Weierstrass point.

\begin{proposition}
\label{prop:virtual-Weierstrass}
Suppose $D$ is Platonic surface so that no involution in the rotation group  preserves both a vertex and a regular virtual-Weierstrass point. Then $D$ has no closed virtual-Weierstrass saddle connections.
\end{proposition}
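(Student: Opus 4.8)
The plan is to prove the contrapositive: starting from a closed virtual-Weierstrass saddle connection I will manufacture an involution in $\Gamma_{Rot}(D)$ that fixes both a vertex and a regular virtual-Weierstrass point. First I would fix a closed virtual-Weierstrass saddle connection $\sigma$, running from a vertex $v$ to itself, and let $m$ be its midpoint. By the observation immediately preceding the proposition, $m$ is a regular virtual-Weierstrass point. Next I would pin down the geometric type of $m$. Since $\pi_\Pi$ carries the lifted $n$-gon decomposition of $\tilde D$ onto the decomposition of $\Pi_p$ by a dilation (Proposition \ref{prop: natural covering}), it sends vertices, edge-midpoints, and face-centers of $\tilde D$ to the corresponding special points of $\Pi_p$. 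A direct inspection of the hyperelliptic involution of $\Pi_p$ (rotation by $\pi$ about the center when $p$ is even, the swap of the two $p$-gons when $p$ is odd) shows that its fixed points lie among the vertices, the edge-midpoints, and---only when $p$ is even---the center. Pulling back and projecting to $D$, every regular virtual-Weierstrass point is an edge-midpoint, or a face-center when $p$ is even; in particular $m$ is of one of these two types.

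Then I would build the involution $\phi$, namely the rotation by $\pi$ about $m$, and show $\phi \in \Gamma_{Rot}(D)$. Because $D$ is Platonic, $\Isom(D)$ acts simply transitively on flags: an isometry fixing a flag fixes an entire face pointwise, hence is the identity. When $m$ is the midpoint of an edge $e$, I would take the isometry sending the flag on one side of $e$ to the flag on the other side with the two endpoints of $e$ interchanged; when $m$ is a face-center and $p$ is even, I would take the isometry sending a flag of that face to its image under the $\pi$-rotation of the regular $p$-gon (available precisely because $p$ is even). In each case a short computation in a chart at $m$ identifies the realizing isometry as the orientation-preserving rotation by $\pi$ about $m$: the competing reflections fail either to reverse $e$ or to exchange the two sides, so only the $\pi$-rotation sends the prescribed flag to its image. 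Thus $\phi \in \Gamma_{Rot}(D)$, and since $\phi^2$ has derivative $I$ and fixes the regular point $m$, it agrees with the identity near $m$ and therefore everywhere, so $\phi$ is an involution.

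Finally I would derive the contradiction. As $\phi$ fixes $m$ and acts as $-I$ on the tangent space there, it reverses the geodesic $\sigma$ about its midpoint: parametrizing $\sigma$ on $[0,L]$ with $\sigma(L/2)=m$, one has $\phi(\sigma(t))=\sigma(L-t)$, and in particular $\phi(v)=\phi(\sigma(0))=\sigma(L)=v$. Hence $\phi$ is an involution in the rotation group fixing both the vertex $v$ and the regular virtual-Weierstrass point $m$, contradicting the hypothesis; therefore $D$ has no closed virtual-Weierstrass saddle connection.

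The step I expect to be the main obstacle is promoting the rotation by $\pi$ about $m$ from a local symmetry to a genuine global element of $\Gamma_{Rot}(D)$. This is exactly where the virtual-Weierstrass hypothesis does its work---it forces $m$ to be one of the few highly symmetric points, an edge-midpoint or a face-center---and where simple transitivity of $\Isom(D)$ on flags is essential; the delicate point is the chart computation verifying that the prescribed flag map is realized by the orientation-preserving $\pi$-rotation rather than by a reflection. An alternative route would lift the hyperelliptic involution to a rotation by $\pi$ of $\tilde D$ fixing a lift of $m$ and attempt to descend it through $\pi_D$, but this requires the deck group of $\pi_D$ to be normalized by the lift, which is not automatic, so the flag-transitivity argument above is the cleaner path.
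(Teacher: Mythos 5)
Your proposal is correct and takes essentially the same approach as the paper: both argue by contradiction, take the midpoint $m$ of the closed virtual-Weierstrass saddle connection, and use the $\pi$-rotation about $m$ — an involution in the rotation group — which reverses $\sigma$ and hence fixes the vertex at its endpoints, contradicting the hypothesis. The only difference is one of detail: the paper's one-line proof simply asserts that this involution exists in $\Gamma_{Rot}(D)$, whereas you supply the justification (classifying $m$ as an edge-midpoint or face-center and invoking simple transitivity of $\Isom(D)$ on flags to promote the local $\pi$-rotation to a global isometry), which is a welcome filling-in of a step the paper leaves implicit.
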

\begin{proof}
Suppose to the contrary that $D$ had a closed virtual-Weierstrass saddle connection $\sigma$. The midpoint of the saddle connection is a regular virtual-Weierstrass point  $x$. Consider the involution $\iota:D \to D$ in the rotation group fixing $x$, which rotates about $x$ and preserves $\sigma$, reversing its orientation. Then $\iota$ also fixes the singularity serving as (both) the endpoints of $\sigma$. By assumption, no such involution exists.
\end{proof}

\begin{theorem}
\label{thm:virtual-Weierstrass}
Let $D$ be a Platonic surface constructed from $p$-gons such that there are an odd number of $p$-gons meeting at each vertex. Then $D$ has no closed virtual-Weierstrass saddle connections.
\end{theorem}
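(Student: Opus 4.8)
The plan is to deduce this from Proposition~\ref{prop:virtual-Weierstrass}: it suffices to verify that no involution in the rotation group $\Gamma_{Rot}(D)$ preserves both a vertex and a regular virtual-Weierstrass point. In fact I would prove the stronger statement that, when an odd number of $p$-gons meet at each vertex, no nontrivial involution in $\Gamma_{Rot}(D)$ fixes any vertex at all. Since an involution fixing both a vertex and a regular virtual-Weierstrass point would in particular fix a vertex, the hypothesis of Proposition~\ref{prop:virtual-Weierstrass} is then satisfied, and the conclusion follows at once.

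The heart of the argument is to bound the stabilizer $G_v$ of a vertex $v$ in $\Gamma_{Rot}(D) = \Isom^+(D)$. Because $D$ is Platonic, every element of $\Isom(D)$ acts on flags, and so preserves the regular $p$-gon decomposition $\cP$ and its singular set; thus each $g \in G_v$ permutes the $d$ polygon-corners incident to $v$, where $d$ is the odd number of $p$-gons meeting at $v$. Since $g$ preserves both orientation and the cyclic order of these corners about $v$, this assignment defines a homomorphism $G_v \to \bZ/d\bZ$ to the group of rotations of the corners. I would then argue this homomorphism is injective: an orientation-preserving isometry fixing $v$ and mapping one incident corner to itself must be the trivial rotation of the flat cone at $v$, hence the identity on a neighborhood of $v$, and therefore, by the rigidity (analytic continuation) of flat isometries on the connected surface $D$, the identity everywhere. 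Consequently $G_v$ embeds in $\bZ/d\bZ$.

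Since $d$ is odd, $\bZ/d\bZ$ has no element of order $2$, so by Lagrange's theorem $G_v$ contains no involution. As $v$ was an arbitrary vertex, no nontrivial involution in $\Gamma_{Rot}(D)$ fixes a vertex; the hypothesis of Proposition~\ref{prop:virtual-Weierstrass} holds, and $D$ has no closed virtual-Weierstrass saddle connections.

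I do not expect a genuine obstacle here. The only point requiring care is the injectivity of the corner-permutation homomorphism, i.e.\ the rigidity assertion that an orientation-preserving flat isometry fixing a vertex together with one incident corner is the identity; this is the standard analytic-continuation rigidity for isometries of flat surfaces. A slightly heavier alternative would be to invoke that $\Isom(D)$ acts simply transitively on flags (transitivity is the definition of Platonic, and freeness follows from the same rigidity), which would identify $G_v$ with the full cyclic group $\bZ/d\bZ$ of order $d$; but only the embedding, and hence the odd order of $G_v$, is actually needed.
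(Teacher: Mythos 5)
Your proposal is correct and takes essentially the same approach as the paper: the paper's proof is a one-liner asserting that no involution in the rotation group preserves a vertex and then invoking Proposition \ref{prop:virtual-Weierstrass}, exactly your reduction. Your additional argument—that the vertex stabilizer in $\Isom^+(D)$ embeds into $\bZ/d\bZ$ via its rotation action on the $d$ incident corners (injectivity by rigidity of flat isometries), so an odd $d$ rules out involutions—is a sound filling-in of the justification the paper leaves implicit.
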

\begin{proof}
Since $D$ has no involutions in the rotation group which preserve a vertex, 
Proposition \ref{prop:virtual-Weierstrass} applies.
\end{proof}


\begin{proposition}
\label{prop:all vW}
If $D$ is a Platonic surface constructed from triangles or squares, then every saddle connection on $D$ is virtual-Weierstrass.
\end{proposition}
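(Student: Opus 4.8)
The plan is to exploit the fact that triangles and squares are exactly the cases in which the base surface $\Pi_p$ is a torus carrying a \emph{single} singular point, which is therefore forced to be a Weierstrass point. Here $p\in\{3,4\}$; by the stratum computation above, $\Pi_3$ and $\Pi_4$ are tori (genus $1$), each presented with a single marked vertex obtained by identifying the vertices of the defining $p$-gon(s). Writing $\Pi_p=\bC/\Lambda$ with this vertex at the origin, the hyperelliptic involution $\iota$ (which swaps the two triangles when $p=3$ and rotates the square by $\pi$ when $p=4$) is the elliptic involution $z\mapsto -z$: indeed $\iota$ is an isometry with derivative $-I$ that preserves the singular set, and since that set is the single point $\{0\}$, $\iota$ must fix it. Thus the vertex of $\Pi_p$ is a Weierstrass point (the trivial $2$-torsion point), and the remaining Weierstrass points are the nonzero $2$-torsion points $\tfrac12\Lambda/\Lambda$, realized as the edge midpoints (and, when $p=4$, the polygon center).

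First I would fix a saddle connection $\sigma$ on $D$ and a lift $\tilde\sigma\subset\tilde D$ under $\pi_D$; this is again a saddle connection, joining two vertices of $\tilde D$ with no vertex in its interior. Next I would push $\tilde\sigma$ down to $\Pi_p$ by the covering $\pi_\Pi\colon\tilde D\to\Pi_p$ of Proposition \ref{prop: natural covering} (regular by Theorem \ref{NormCov}, and a translation covering with $\pi_\Pi^{-1}(\Sigma_{\Pi_p})=\Sigma_{\tilde D}$). Because $\Pi_p$ has exactly one vertex, both endpoints of $\bar\sigma:=\pi_\Pi(\tilde\sigma)$ are that vertex $0$, and because the preimage of $0$ is precisely the vertex set of $\tilde D$, the segment $\bar\sigma$ contains no vertex in its interior; hence $\bar\sigma$ is a saddle connection on $\Pi_p$ whose holonomy is a primitive vector $\lambda\in\Lambda$.

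It then remains to see that $\bar\sigma$ is fixed by $\iota$, which is exactly the definition of $\sigma$ being a virtual-Weierstrass saddle connection. Since $\iota(z)=-z$ fixes the common endpoint $0$ and has derivative $-I$, the image $\iota(\bar\sigma)$ is a saddle connection from $0$ to $0$ with holonomy $-\lambda$; by uniqueness of the geodesic representative this is the orientation-reversal of $\bar\sigma$, so $\iota(\bar\sigma)=\bar\sigma$ as a set. Equivalently, the midpoint of $\bar\sigma$ lies at $\tfrac12\lambda\pmod\Lambda$, a nonzero $2$-torsion point (nonzero because $\lambda$ is primitive), so it is a \emph{regular} Weierstrass point of $\Pi_p$; pulling back through $\pi_\Pi$ and $\pi_D$ shows the midpoint of $\sigma$ is a regular virtual-Weierstrass point, whence $\sigma$ is virtual-Weierstrass by the observation preceding the statement.

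The step I expect to require the most care is the structural input that makes the argument run only for $p\in\{3,4\}$: one must verify that $\Pi_3$ and $\Pi_4$ genuinely have a \emph{single} singular point and that the specified hyperelliptic involution fixes it. This is precisely what fails for $p\equiv 2\pmod 4$ (e.g.\ $p=6$, where $\Pi_6$ is a torus but has two vertices, neither of which need be Weierstrass) and for larger $p$ (where $\Pi_p$ has higher genus and several singular points), so the single-vertex property is doing all the work and should be isolated cleanly rather than buried in coordinate computations.
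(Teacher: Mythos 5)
Your proof is correct and takes essentially the same route as the paper: both arguments rest on the observation that $\Pi_3$ and $\Pi_4$ are flat tori with a single marked point fixed by the hyperelliptic involution, so that any saddle connection, being a closed flat geodesic through that fixed point, is itself invariant under the involution and has a Weierstrass point at its midpoint. Your write-up merely makes explicit the lifting through $\pi_D$ and $\pi_\Pi$, the primitivity of the holonomy vector, and the identification of the midpoint with a nonzero $2$-torsion point, all of which the paper leaves implicit.
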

\begin{proof}
This is because the midpoint of any saddle connection on $\Pi_3$ and on $\Pi_4$ is a Weierstrass point. To see this observe that both $\Pi_3$ and $\Pi_4$ are flat tori with a single marked point, which is fixed by the hyperelliptic involution. A saddle connection is thus a closed flat geodesic through a fixed point and is therefore fixed by the hyperelliptic involution.
\end{proof}

\begin{corollary}
\label{cor:no closed}
Let $D$ be a Platonic surface constructed from triangles or squares so that there are an odd number of faces meeting at each vertex. Then, $D$ has no closed saddle connections.
\end{corollary}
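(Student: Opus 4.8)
The plan is to deduce this directly by combining the two immediately preceding results, Proposition \ref{prop:all vW} and Theorem \ref{thm:virtual-Weierstrass}, whose hypotheses are exactly what the corollary supplies. First I would observe that a Platonic surface constructed from triangles or squares is built from $p$-gons with $p \in \{3,4\}$, so Proposition \ref{prop:all vW} applies verbatim: every saddle connection on $D$ is virtual-Weierstrass. In particular, any \emph{closed} saddle connection on $D$ is automatically a closed virtual-Weierstrass saddle connection.

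Next I would invoke the hypothesis that an odd number of faces meet at each vertex. This is precisely the condition required by Theorem \ref{thm:virtual-Weierstrass} (applied with $p \in \{3,4\}$), which then guarantees that $D$ has no closed virtual-Weierstrass saddle connections at all. The two steps combine by contradiction: if $D$ possessed a closed saddle connection $\sigma$, then $\sigma$ would be virtual-Weierstrass by the first step, hence a closed virtual-Weierstrass saddle connection, contradicting the second step. Therefore $D$ admits no closed saddle connection.

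I do not expect any genuine obstacle here, since the statement is a formal consequence of two results already in hand; the substantive content has been absorbed into those lemmas. The only point requiring care is confirming that the two stated hypotheses invoke the two prior results simultaneously—"triangles or squares" yields $p \in \{3,4\}$ for Proposition \ref{prop:all vW}, and "an odd number of faces meeting at each vertex" supplies the odd-degree condition for Theorem \ref{thm:virtual-Weierstrass}—both of which hold under the corollary's assumptions.
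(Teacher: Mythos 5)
Your proof is correct and is exactly the paper's argument: the paper's own proof reads ``Combine Theorem \ref{thm:virtual-Weierstrass} with Proposition \ref{prop:all vW},'' which is precisely the two-step combination (every saddle connection is virtual-Weierstrass, and no closed virtual-Weierstrass saddle connections exist under the odd-valence hypothesis) that you spell out. Your expanded write-up just makes the contradiction explicit; nothing further is needed.
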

\begin{proof}
Combine Theorem \ref{thm:virtual-Weierstrass} with Proposition \ref{prop:all vW}.
\end{proof}

\begin{corollary}
\label{cor:PlatSolidsBlocking}
The tetrahedron, cube, octahedron and icosahedron have no closed saddle connections.
\end{corollary}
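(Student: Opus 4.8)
The plan is to reduce the final statement, Corollary \ref{cor:PlatSolidsBlocking}, to Corollary \ref{cor:no closed}, which has already been established. The essential observation is that each of the four polyhedra named -- tetrahedron, cube, octahedron, icosahedron -- is a Platonic surface built out of either triangles or squares, and moreover has an \emph{odd} number of faces meeting at every vertex. This is exactly the hypothesis required by Corollary \ref{cor:no closed}, so once the face-counts are verified the result follows immediately.

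First I would record the relevant combinatorial data for each solid. The tetrahedron is built from equilateral triangles with $3$ faces meeting at each vertex; the octahedron is built from equilateral triangles with $4$ faces at each vertex; the cube is built from squares with $3$ faces at each vertex; and the icosahedron is built from equilateral triangles with $5$ faces at each vertex. Three of these (tetrahedron, cube, icosahedron) have an odd number of faces at each vertex and are constructed from triangles or squares, so Corollary \ref{cor:no closed} applies directly to them. The one case that does \emph{not} immediately fit the hypotheses of Corollary \ref{cor:no closed} is the octahedron, since $4$ triangles meet at each of its vertices.

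The octahedron must therefore be handled separately, and I expect this to be the only genuine obstacle. The issue is that Theorem \ref{thm:virtual-Weierstrass} requires an odd number of faces at each vertex precisely to guarantee that the rotation group contains no involution fixing a vertex. For the octahedron, the vertices satisfy $p \equiv 0 \pmod 3$ in the sense of being triangle-based, but with $4$ triangles meeting at each vertex the vertex angle is $\frac{4\pi}{3}$, so the cone surface is still $\Pi_3$-covered and hence every saddle connection remains virtual-Weierstrass by Proposition \ref{prop:all vW}. To rule out closed saddle connections here I would instead appeal directly to Proposition \ref{prop:virtual-Weierstrass}: it suffices to check that no involution in the rotation group of the octahedron simultaneously preserves a vertex and a regular virtual-Weierstrass point. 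A rotation of the octahedron fixing a vertex is a rotation by $\frac{\pi}{2}$ about the axis through that vertex and its antipode, and the only involution about such an axis is the $180^\circ$ rotation; one checks that this involution sends the fixed vertex to itself but its action on the edge-midpoints and face-centers (the regular virtual-Weierstrass points) has no fixed point among them that also lies on a saddle connection through the vertex.

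Assembling these pieces, the proof is short: for the tetrahedron, cube, and icosahedron, invoke Corollary \ref{cor:no closed} after noting the odd vertex-valence and the triangle/square decomposition; for the octahedron, invoke Proposition \ref{prop:all vW} together with a direct verification, via Proposition \ref{prop:virtual-Weierstrass}, that no rotation-group involution fixes both a vertex and a regular virtual-Weierstrass point. The main subtlety to watch is getting the octahedron's symmetry bookkeeping exactly right, since it is the unique case among the four that escapes the clean odd-valence criterion.
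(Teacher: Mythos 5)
Your proposal follows the paper's proof essentially verbatim: the tetrahedron, cube, and icosahedron are dispatched by Corollary \ref{cor:no closed}, and the octahedron is handled by combining Proposition \ref{prop:all vW} with Proposition \ref{prop:virtual-Weierstrass}. The only differences are cosmetic: the paper states the octahedron check more cleanly (an involution in the rotation group fixing a vertex is the $180^\circ$ rotation about a vertex axis, whose fixed-point set is exactly the two antipodal vertices, hence it fixes no regular virtual-Weierstrass point whatsoever), whereas your version adds an unnecessary qualifier about saddle connections through the vertex and lists face centers among the regular virtual-Weierstrass points, even though for triangle-built surfaces ($p=3$ odd) only edge midpoints qualify.
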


This corollary was obtained for the tetrahedron and the cube by other methods in \cite{DDTY},
and in the remaining cases above in \cite{FuchsArnold}. This result can also be obtained (as in \cite{FuchsArnold}) 
from considering tilings of the plane by triangles and squares, which makes sense in our context because these tilings are the universal covers of $\Pi_3$ and $\Pi_4$, respectively.

\begin{proof}
That the tetrahedron, cube and icosahedron have no closed saddle connections
follows directly from Corollary \ref{cor:no closed}. That the octahedron has no closed saddle connections follows from Proposition \ref{prop:virtual-Weierstrass} since
an involution in the rotation group of the octahedron which fixes a vertex has a fixed point set consisting only of two antipodal vertices.
\end{proof}

\noindent Theorem \ref{thm:virtual-Weierstrass} tells us that the dodecahedron has no closed virtual-Weierstrass saddle connections. We use this in \S\ref{sect:DodecSearchOutput}.

\section{Unfoldings of the Platonic solids}
\label{sect:unfoldings}

The covering $\pi_{\Pi}: \tilde D \rightarrow \Pi_p$ from the previous section can be encoded in terms of elements of the permutation group $S_m$, where $m$ is the degree of $\pi_{\Pi}$.  Consider a basis $\{x_0, \ldots, x_{p-1}\}$ or $\{x_0, \ldots, x_{p/2-1}\}$, depending on the parity of $p$, for the fundamental group $\pi_1(\Pi_p^*, p_0)$.  Number the sheets of the covering space $\tilde D$ by elements of $\{0, \ldots, m-1\}$.  Then each element of the fundamental group induces a permutation in $S_m$ on the fiber over $p_0$, which is the monodromy representation associated to the covering $\pi_{\Pi}$.  In this section we exhibit Sage code that generates these permutations.

While it will be necessary to produce the permutations for the octahedron, cube, icosahedron, and dodecahedron, we will only produce the permutations for the latter three and only include the calculations for the octahedron in the auxiliary Sage notebook.  In fact, the case of the octahedron can be done entirely by hand.

\subsection{Labeling the Polygons}

In this section we construct a ``coordinate system'' adapted to a presentation of the unfolded surface $\tilde D$ to identify every polygon in $\tilde D$.  The unfolded surface can be presented by taking a net of the Platonic surface, and taking $k$ copies of it with each one rotated by $2\pi/k$ from the previous copy.  Our so-called coordinate system will assign a pair $(\verb|sheet|, \verb|poly|)$ to every copy of a $p$-gon in the unfolded surface $\tilde D$.  To do this we orient the presentation of $\tilde D$ so that every polygon contains a horizontal edge.  Label the horizontal edge, or the lower horizontal edge in the case of even-sided polygons, $0$, and continue counter-clockwise around the polygon to $p-1$.  Then the surface is (over)-determined by a function that associates to each polygon a list of $p$ tuples specifying which polygon is incident with edge $j$.  For each solid, we cunstruct a function that takes a polygon with the coordinates $(\verb|sheet|, \verb|poly|)$ as an input and outputs a list of $p$ tuples specifying the coordinates of each polygon to which the input polygon is adjacent.  The index $i$ element of the list will correspond to the polygon incident with edge $i$ with the convention above.

The cube is a valuable example because it is the only classical Platonic solid constructed from a regular polygon with an even number of sides, i.e. the square.  In a sequel~\cite{Aulicino} to this paper, arbitrary regular polygons with an even number of sides are considered and the procedure below will generalize to these cases.

\begin{figure}
\begin{center}
\includegraphics[height=1.4in]{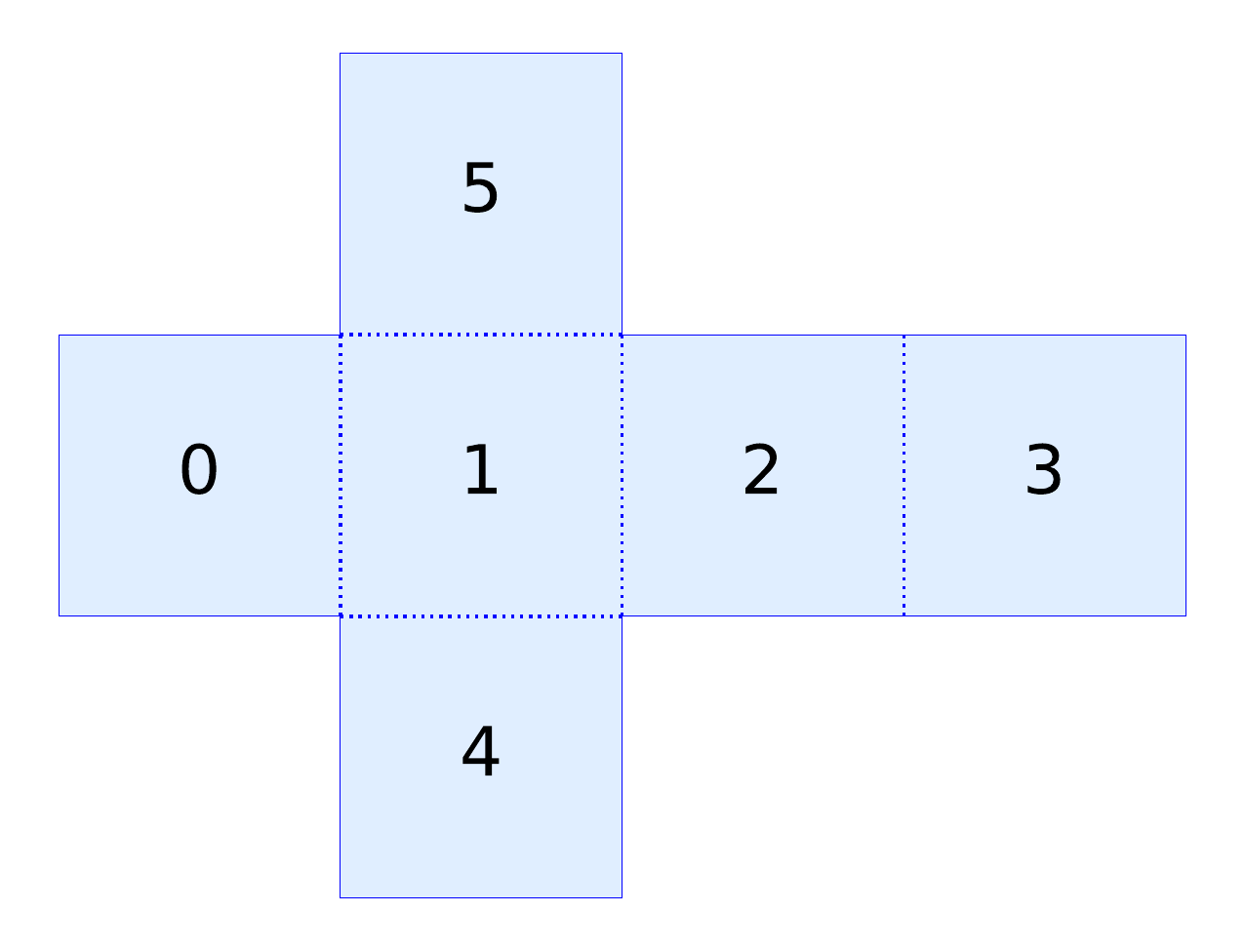}
\end{center}
\caption{The cube with the labeling of squares used in the function \texttt{build\_adj\_cube}.}
\label{fig:cube_net}
\end{figure}

\begin{figure}
\centerline{\includegraphics[width=4in]{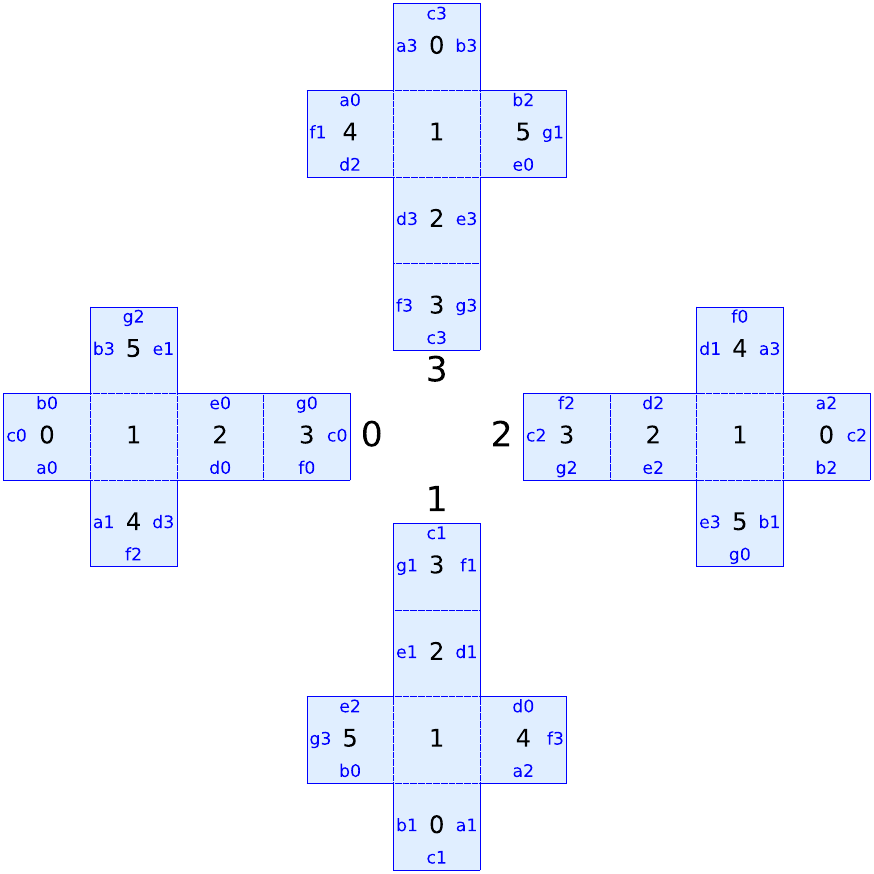}}
\caption{The unfolding of the cube following the conventions of Figure \ref{fig:unfolding}.}
\label{fig:unfolded cube}
\end{figure}

\begin{example}[The cube]
The following code creates a list of lists that outputs the adjacencies of the input square as specified by the tuple \verb|(sheet, square)|.  The variable \verb|square| refers to the labeling of the squares depicted in Figure \ref{fig:cube_net}.  The lower horizontal edge has index zero in the resulting list of four pairs and the adjacencies continue counter-clockwise. The reader may wish to refer to Figure \ref{fig:unfolded cube} which shows the unfolded cube.

\begin{verbatim}
def build_adj_cube(sheet, square):
    i = sheet;
    cube_adj_base = 6*[None]
    cube_adj_base[0] = [(i-1,4), (i,1), (i+1,5), (i,3)]
    cube_adj_base[1] = [(i,4), (i,2), (i,5), (i,0)]
    cube_adj_base[2] = [(i+1,4), (i,3), (i-1,5), (i,1)]
    cube_adj_base[3] = [(i+2,4), (i,0), (i+2,5), (i,2)]
    cube_adj_base[4] = [(i+2,3), (i-1,2), (i,1), (i+1,0)]
    cube_adj_base[5] = [(i,1), (i+1,2), (i+2,3), (i+3,0)]
    prelim_adj = [cube_adj_base[square%6][(k-i)%4] for k in range(4)]
    return [[item[0]%4, item[1]%6] for item in prelim_adj] 
\end{verbatim}

All lines can be checked by inspection with the exception of the one non-trivial line defining \verb|prelim_adj|.  By definition of \verb|cube_adj_base|, the index \verb|[square%6]|
extracts the appropriate square.  However, for square \verb|square| on sheet \verb|i|, the lower horizontal edge is given by $-\verb|i|$.  Each successive edge counter-clockwise is given by adding \verb|k| to this number.  Hence, the formula follows.
\end{example}

\begin{figure}
\begin{center}
\includegraphics[height=1.4in-]{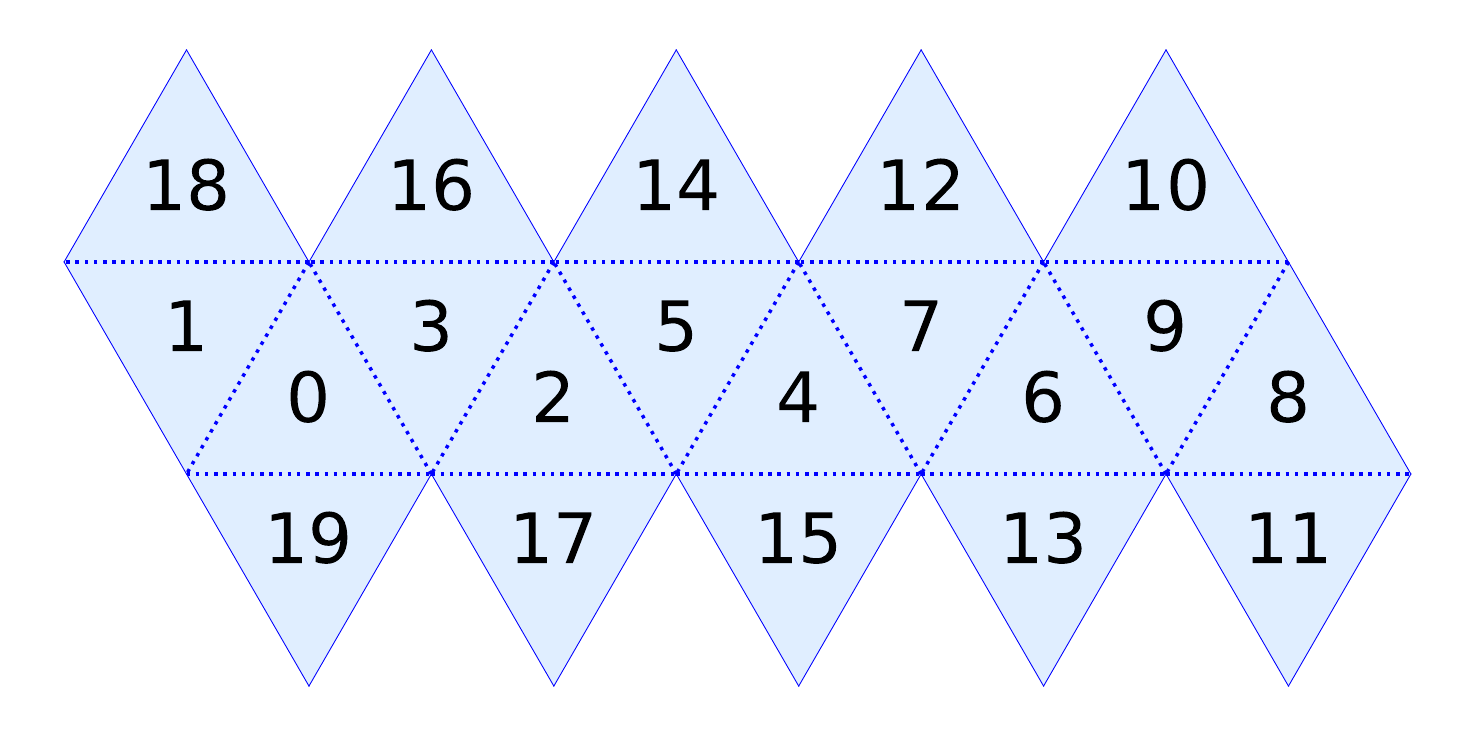}
\end{center}
\caption{The icosahedron with the labeling of triangles used in the function \texttt{build\_adj\_icosa}.}
\label{fig:icosahedron_net}
\end{figure}

\begin{example}[The icosahedron]
As in the previous example, the following code defines the adjacencies among all twenty triangles.  The variable \verb|triangle| refers to the labeling of the triangles depicted in Figure \ref{fig:icosahedron_net}.
\begin{verbatim}
def build_adj_icosa(sheet, triangle):
    i = sheet;
    icos_adj_base = 20*[None]
    icos_adj_base[0] = [(i,19),(i,3),(i,1)]
    icos_adj_base[1] = [(i,18),(i,8),(i,0)]
    icos_adj_base[2] = [(i,17),(i,5),(i,3)]
    icos_adj_base[3] = [(i,16),(i,0),(i,2)]
    icos_adj_base[4] = [(i,15),(i,7),(i,5)]
    icos_adj_base[5] = [(i,14),(i,2),(i,4)]
    icos_adj_base[6] = [(i,13),(i,9),(i,7)]
    icos_adj_base[7] = [(i,12),(i,4),(i,6)]
    icos_adj_base[8] = [(i,11),(i,1),(i,9)]
    icos_adj_base[9] = [(i,10),(i,6),(i,8)]
    icos_adj_base[10] = [(i,9),(i-1,18),(i+1,12)]
    icos_adj_base[11] = [(i,8),(i-1,13),(i+1,19)]
    icos_adj_base[12] = [(i,7),(i-1,10),(i+1,14)]
    icos_adj_base[13] = [(i,6),(i-1,15),(i+1,11)]
    icos_adj_base[14] = [(i,5),(i-1,12),(i+1,16)]
    icos_adj_base[15] = [(i,4),(i-1,17),(i+1,13)]
    icos_adj_base[16] = [(i,3),(i-1,14),(i+1,18)]
    icos_adj_base[17] = [(i,2),(i-1,19),(i+1,15)]
    icos_adj_base[18] = [(i,1),(i-1,16),(i+1,10)]
    icos_adj_base[19] = [(i,0),(i-1,11),(i+1,17)]
    prelim_adj = [icos_adj_base[triangle%20][(k-i)%3] for k in range(3)]
    return [[item[0]%6, item[1]%20] for item in prelim_adj]
\end{verbatim}
As before all lines of this code are easily checked with the exception of the term \verb|[(k-i)%3]|
in the definition of \verb|prelim_adj|.  We see that in the six rotations of the net of the icosahedron, each one rotates the net by $\pi/3$.  Therefore, the horizontal edge returns to the horizontal direction after three rotations, even though the triangle is inverted.  After one rotation of the net the edge with index $1$ is rotated to the (top) horizontal and therefore should have index $0$, and this continues cyclically.  Hence $\verb|k|-\verb|i| \mod 3$ yields the correct formula.
\end{example}

\begin{figure}
\begin{center}
\includegraphics[height=1.4in]{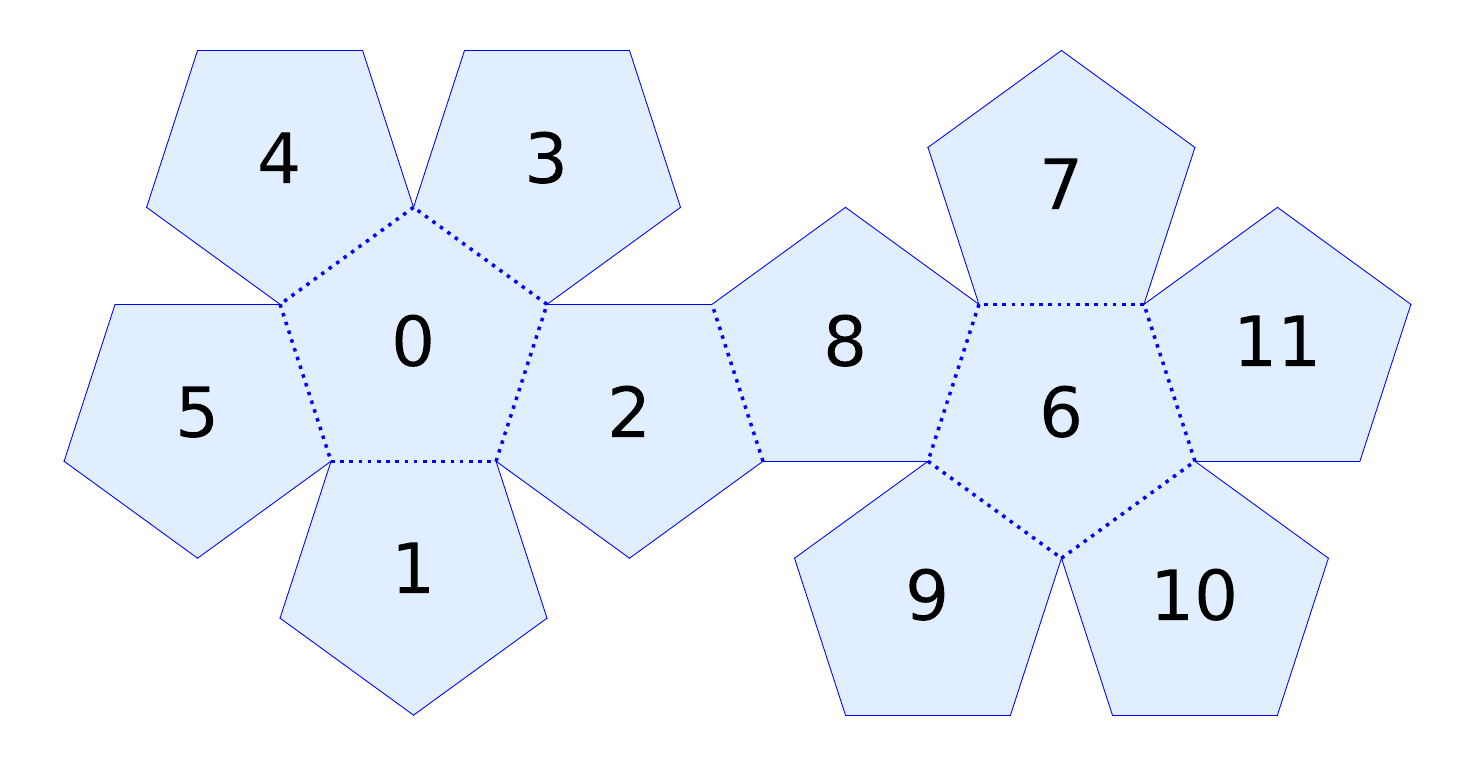}
\end{center}
\caption{A net of the dodecahedron with the labeling of the pentagons used in the function \texttt{build\_adj\_dodec}.}
\label{fig:dodecahedron}
\end{figure}

\begin{example}[The dodecahedron]
\label{ex:dodecahedron}
This is very similar to the previous example.  The variable \verb|pent| refers to the labeling of the pentagons depicted in Figure \ref{fig:dodecahedron}.  In this case, Figure \ref{fig:unfolding} includes a labeling of each of the sheets and each of the pentagons in the unfolded dodecahedron.  We remark that the labeling of the sheets can be cyclically permuted without changing the function below.
\begin{verbatim}
def build_adj_dodec(sheet, pent):
    i = sheet;
    dodec_adj_base = 12*[None]
    dodec_adj_base[0] = [(i,1),(i,2),(i,3),(i,4),(i,5)]
    dodec_adj_base[1] = [(i,0),(i+1,5),(i+4,10),(i-4,9),(i-1,2)]
    dodec_adj_base[2] = [(i-1,3),(i,0),(i+1,1),(i-2,9),(i,8)]
    dodec_adj_base[3] = [(i+4,7),(i-1,4),(i,0),(i+1,2),(i+2,8)]
    dodec_adj_base[4] = [(i-4,7),(i-2,11),(i-1,5),(i,0),(i+1,3)]
    dodec_adj_base[5] = [(i+1,4),(i,11),(i+2,10),(i-1,1),(i,0)]
    dodec_adj_base[6] = [(i,7),(i,8),(i,9),(i,10),(i,11)]
    dodec_adj_base[7] = [(i,6),(i+1,11),(i+4,4),(i-4,3),(i-1,8)]
    dodec_adj_base[8] = [(i-1,9),(i,6),(i+1,7),(i-2,3),(i,2)]
    dodec_adj_base[9] = [(i+4,1),(i-1,10),(i,6),(i+1,8),(i+2,2)]
    dodec_adj_base[10] = [(i-4,1),(i-2,5),(i-1,11),(i,6),(i+1,9)]
    dodec_adj_base[11] = [(i+1,10),(i,5),(i+2,4),(i-1,7),(i,6)]
    prelim_adj = [dodec_adj_base[pent%12][(k+2*i)%5] for k in range(5)]
    return [[item[0]%10, item[1]%12] for item in prelim_adj]
\end{verbatim}
Again we focus on justifying the index \verb|[(k+2*i)%5]|.  
The situation is slightly different here because one rotation by $2\pi/10$ moves the edge with index $2$ into the horizontal.  Hence, the indexing must be shifted by \verb|2*i| to compensate for this change because the sheet index is exactly equal to the number of rotations applied to sheet $0$.  See Figure \ref{fig:unfolding}.
\end{example}

\begin{remark}
Once again, we emphasize that the convenience of this coordinate system is that in every function above the second term is static because it is independent of the unfolding.  This greatly simplifies the construction of these functions.
\end{remark}

\subsection{Labeling the Sheets of $\pi_{\Pi}$}
\label{sect:labeling sheets}
We return to the covering $\pi_{\Pi}: \tilde D \rightarrow \Pi_p$.  We label the sheets of the covering by a set of integers equal to the degree of the covering.  For the cube this is straightforward.

\begin{verbatim}
def squares():
    return list(itertools.product(*[range(4), range(6)]))
\end{verbatim}

\noindent
This code generates a list of all possible square coordinates in the unfolded cube, which using the coordinates from the previous section, is the Cartesian product $\{0,1,2,3\} \times \{0,1,\ldots,5\}$.  The result is a Python list.  The index of the coordinate in the list implicitly numbers the sheets.

On the other hand, the icosahedron covers $\Pi_3$, which is a rhombus, and the dodecahedron covers $\Pi_5$, which is a double pentagon.  In these cases, each polygon has a unique horizontal edge following the convention above.  Therefore, it is essential to identify the subset of polygons with a horizontal edge on their bottom.  This is called the \emph{top} polygon here.

\begin{verbatim}
def double_triangle_top():
    odd_array = [range(1,6,2), range(1,20,2)]
    even_array = [range(0,6,2), range(0,20,2)]
    return list(itertools.product(*odd_array)) + list(itertools.product(*even_array))
\end{verbatim}

\noindent This code generates a Python list containing the elements of
$$\left(\{1,3,5\} \times \{1,3,...,19\}\right) \cup \left(\{0,2,4\} \times \{0,2,...,18\}\right).$$

\begin{verbatim}
def double_pent_top():
    odd_array = [range(1,10,2), range(7,12) + [0]]
    even_array = [range(0,9,2), range(1,7)]
    return list(itertools.product(*odd_array)) + list(itertools.product(*even_array))
\end{verbatim}

Finally, the other half of $\Pi_p$ must be identified using the functions of the previous section to get a pair of polygons, and the implicit numbering of this list is the numbering of the sheets of the covering $\pi_{\Pi}$.

\begin{verbatim}
def double_triangles():
    return [[list(top),build_adj_icosa(top[0],top[1])[0]] for top in double_triangle_top()]
\end{verbatim}

\begin{verbatim}
def double_pent():
    return [[top,build_adj_dodec(top[0],top[1])[4]] for top in double_pent_top()]
\end{verbatim}

\subsection{Constructing the Permutations}
\label{sect:ConstructPerms}

We now have the necessary ingredients to construct the permutations corresponding to the images of the generators of the fundamental group of $\Pi_n$ under the monodromy representation. (Figure \ref{DoublePentConvsFundGrp} has the generators for $\pi_1(\Pi_5,p_0)$.)
Recall that the numbers in each permutation correspond to the sheet with that index in the list of sheets generated in the previous section.

We begin with the cube.  For the cube, $\Pi_4$ is the square torus and 
we take the generators of $\pi_1(\Pi_4,p_0)$ to be a horizontal closed loop 
and a vertical closed loop.  The list of all squares is stored as \verb|sq_list|.  The algorithm below takes the first square \verb|i| from the list and determines the index of the square in \verb|sq_list| sharing edge \verb|abcd| with it.  It stores all of the sheets that have been accounted for in the list \verb|total| and successively chooses the smallest integer not in \verb|total| on which to build the next permutation.  It proceeds until all sheets have been accounted for.

\begin{verbatim}
def perm_sq(abcd):
    sq_list = squares()
    total = []
    i = 0
    perm_a_sub = []
    perm_a = []
    while len(total) < 20:
        total += perm_a_sub
        total.sort()
        if len(total) != 0:
            i_list = [j for j in range(len(total)) if j != total[j]]
            if i_list == []:
                i = len(total)
            else:
                i = i_list[0]
        perm_a_sub = []
        while i not in perm_a_sub:
            perm_a_sub += [i]
            i = sq_list.index(tuple(build_adj_cube(sq_list[i][0], sq_list[i][1])[abcd]))
        perm_a.append(tuple(perm_a_sub))
    return perm_a
\end{verbatim}

The icosahedron and dodecahedron can be addressed in a single function with only one ad hoc modification: the function that determines the adjacent polygon is completely different for each of these solids.  The \verb|plat_solid| variable is introduced to distinguish them.  The other difference from the previous case is that there is a pair of polygons to address and so the sheet that sheet \verb|i| is adjacent to depends on determining the element in \verb|bot_list| that the element in \verb|top_list| is adjacent to.

\begin{verbatim}
def perm_odd(plat_solid, abcd, top_list, double_list, upper_limit):
    bot_list = [tri[1] for tri in double_list]
    total = []
    i = 0
    perm_a_sub = []
    perm_a = []
    while len(total) < upper_limit:
        total += perm_a_sub
        total.sort()
        if len(total) != 0:
            i_list = [j for j in range(len(total)) if j != total[j]]
            if i_list == []:
                i = len(total)
            else:
                i = i_list[0]
        perm_a_sub = []
        while i not in perm_a_sub:
            perm_a_sub += [i]
            if plat_solid == 5:
                i = bot_list.index(build_adj_dodec(top_list[i][0],top_list[i][1])[abcd])
            elif plat_solid == 4:
                i = bot_list.index(build_adj_icosa(top_list[i][0],top_list[i][1])[abcd])
        perm_a.append(tuple(perm_a_sub))
    return perm_a
\end{verbatim}

\section{The Arithmetic Platonic Solids}\label{sect:nondodec}

This section concerns the tetrahedron, octahedron, cube, and icosahedron which we refer to collectively as the \emph{Arithmetic Platonic Solids}, since they are covers of tori branched over one point.  We begin by giving a computer assisted proof of Corollary \ref{cor:PlatSolidsBlocking}.  This is done to show how one could approach the problem computationally in general. Finally we use the data computed to describe the Teichm\"uller curves of these unfoldings.  The proof is effectively the same as that of~\cite{FuchsDedicata} (icosahedron and cube) and \cite{FuchsFuchs} (tetrahedron and octahedron).

We observe that all of these Platonic solids have either square faces, in which case the unfolding is square-tiled and covers $\Pi_4$, or they have triangle faces, in which case they cover the double triangle $\Pi_3$.  Let
$$M = \left(\begin{array}{cc} 1 & \frac{-1}{\sqrt{3}} \\ 0 & \frac{2}{\sqrt{3}} \\ \end{array} \right).$$
Observe that $M(\Pi_3) = \Pi_4$.  Therefore, if $S$ is triangle-tiled, then $M(S)$ is square-tiled.  Throughout this section, we regard all triangle-tiled Platonic surfaces as squ 	are-tiled surfaces.\footnote{As observed in the introduction, this observation was stated in the final paragraph of \cite{FK36} using the terminology of tilings of the plane.}

Recall that for any square-tiled surface, labeling the squares and writing two permutations corresponding to the horizontal and vertical direction suffice to completely determine the surface. The package \verb|surface_dynamics| allows you to define these objects (``origamis'') in SageMath \cite{SurfaceDynamics}. The permutations specifying the unfoldings of the octahedron, cube, and icosahedron are given below.

\begin{example}[The octahedron] The flat metric on the surface of an octahedron is given by a $3$-differential, so the natural cover is tiled by $8 \times 3 = 24$ triangles and $8 \times 3/2 = 12$ rhombi, so our permutations are in $S_{12}$. Our indices start at $0$.
\begin{verbatim}
oct_perm_0 = [(0, 7, 6), (1, 3, 4), (2, 11, 5), (8, 10, 9)]
oct_perm_1 = [(0, 11, 9), (1, 7, 10), (2, 3, 8), (4, 5, 6)]
octahedron = Origami(oct_perm_0, oct_perm_1)
\end{verbatim}
The monodromy group generated by these permutations is of order $12$ and isomorphic to $A_4$.
\end{example}

Using the functions defined in the previous section, we construct the unfolding of the cube as a square-tiled surface as follows.

\begin{example}[The cube] The flat metric on the surface of a cube is given by a $4$-differential, so the natural cover is tiled by $6 \times 4 = 24$ squares so our permutations are in $S_{24}$. Our indices start at $0$.
\begin{verbatim}
cube_perm_0 = perm_sq(0)
cube_perm_1 = perm_sq(1)
cube = Origami(cube_perm_0, cube_perm_1)
\end{verbatim}

For reference, the values of these permutations are as follows

\begin{verbatim}
cube_perm_0 = [(0, 22, 14, 11), (1, 4, 15, 5), (2, 10, 12, 23),
(3, 16, 13, 17), (6, 9, 8, 7), (18, 19, 20, 21)]
cube_perm_1 = [(0, 1, 2, 3), (4, 20, 17, 6), (5, 8, 16, 18), 
(7, 10, 21, 11), (9, 22, 19, 23), (12, 15, 14, 13)]
\end{verbatim}
The group generated by these permutations is of order $24$ and isomorphic to $S_4$.
\end{example}

\begin{example}[The icosahedron]
The flat metric on the surface of an icosahedron is given by a $6$-differential, so the natural cover is tiled by $20 \times 6 = 120$ triangles and $20 \times 6/2 = 60$ rhombi, so our permutations are in $S_{20}$. Our indices start at $0$.
\begin{verbatim}
icos_perm_1 = perm_odd(4, 1, double_triangle_top(), double_triangles())
icos_perm_2 = perm_odd(4, 2, double_triangle_top(), double_triangles())
icosahedron = Origami(icos_perm_1, icos_perm_2)
\end{verbatim}

The permutations above are as follows.

\begin{verbatim}
icos_perm_1 = [(0, 45, 13, 17, 9), (1, 49, 14, 16, 8), 
(2, 48, 10, 15, 7), (3, 47, 11, 19, 6), (4, 46, 12, 18, 5), 
(20, 21, 22, 23, 24), (25, 30, 38, 56, 53), (26, 34, 39, 57, 52), 
(27, 33, 35, 58, 51), (28, 32, 36, 59, 50), (29, 31, 37, 55, 54), 
(40, 44, 43, 42, 41)]
icos_perm_2 = [(0, 4, 3, 2, 1), (5, 33, 36, 48, 40), 
(6, 32, 37, 49, 44), (7, 31, 38, 45, 43), (8, 30, 39, 46, 42), 
(9, 34, 35, 47, 41), (10, 19, 27, 23, 55), (11, 18, 26, 24, 59), 
(12, 17, 25, 20, 58), (13, 16, 29, 21, 57), (14, 15, 28, 22, 56), 
(50, 51, 52, 53, 54)]
\end{verbatim}
The group generated by these permutations is of order $60$ and isomorphic to $A_5$.

\end{example}

The close relationship between the monodromy groups and the rotation groups of the Platonic solids is not coincidental (the monodromy group of the dodecahederon covering, generated by 4 permutations, is also $A_5$) and will be further explored in~\cite{Aulicino}.  


\subsection{The Topology of the Teichm\"uller Curves}\label{sect:topology} Here, we describe the Teichm\"uller curves associated to the octahedron, cube, and icosahedron. Note that for the tetrahedron the cover is a torus, so the Veech group is $\SL(2, \bZ)$, and the Teichm\"uller curve is the modular curve, which is of genus 0, has one cusp of width $1$ at infinity, and has two orbifold points of order $2$ and $3$ respectively. The \emph{geometric cusp width} of a cusp of a Fuchsian group is the length of the longest embedded horocycle on the quotient surface homotopic to the cusp. We will be considering finite index subgroups $\Gamma'$ of a fixed Fuchsian group $\Gamma$ (either $\SL(2, \bZ)$ or the $\Delta(2, 5, \infty)$), and the \emph{normalized} cusp width of a cusp of $\Gamma'$ is the ratio of the geometric cusp width of $\Gamma'$ to that of $\Gamma$. This is always an integer. In the case of $\SL(2, \bZ)$, the cusp width of the only cusp at $\infty$ is $1$, so the geometric and normalized cusp widths for finite index subgroups are identical.

As above, we are using the Sage package \verb|surface_dynamics| which computes Veech groups of square-tiled surfaces using the monodromy, and allows us to compute cusps, cusp widths, genus, and index.

Since each of these computations only rely on the Veech group, we compute each of the relevant Veech groups at once.  Define the Veech groups with the following commands.

\begin{verbatim}
H_octa = octahedron.veech_group()
H_cube = cube.veech_group()
H_icos = icosahedron.veech_group()
\end{verbatim}

Let
$$\verb|H_plat| \in \{\verb|H_octa|, \verb|H_cube|, \verb|H_icos| \}.$$
For each of the three Platonic solids above we compute:
\begin{verbatim}
H_plat.index() #index in SL(2, Z)
H_plat.cusps() #list of cusps
[H_plat.cusp_width(cusp) for cusp in H_plat.cusps()]
H_plat.nu2() #Number of orbifold points of order 2
H_plat.nu3() #Number of orbifold points of order 3
H_plat.genus()
\end{verbatim}
For the convenience of the reader, we collect the output of this code in Table \ref{ArithPlatSolidsTeichCurveTable}\footnote{Since there are infinitely many choices for a fundamental domain of a discrete group, Sage can change its choice with each computation.  This will lead to different points in the cusp list from those listed in Column 3.  The cusps listed here represent one possible choice.}.
The widths of individual cusps in the Teichm\"uller curves appear as the numbers $n(p, q)$ in the work of D. Fuchs~\cite{FuchsDedicata} and D. Fuchs and K. Fuchs~\cite{FuchsFuchs}.

\begin{table}
  \centering
\begin{tabular}{l|c|c|c|c|c|c}
Platonic Solid & Index in $SL(2, \bZ)$ & Cusps & Cusp Widths & Order $2$ & Order $3$ & Genus \\
 \hline
Tetrahedron & $1$ & $\{\infty\}$ & $\{1\}$ &  $1$ & $1$ & $0$ \\
Octahedron & $4$ & $\{\infty, 1\}$ &$\{3, 1 \}$& $0$ & $1$ & $0$ \\
Cube & $9$ & $\{\infty, 1/2, 1\}$ &$\{4, 2, 3\}$ & $1$ & $0$ & $0$ \\
Icosahedron & $10$ & $\{\infty, 3/2, 2\}$ & $\{5, 2, 3\}$ & $0$ & $1$ & $0$ \\
\end{tabular}
\caption{Quantities associated to the Teichm\"uller curves of the arithmetic Platonic solids: Column two provides the index of the Veech group in $\SL(2,\bZ)$, Columns $5$ and $6$ provide number of orbifold points of orders $2$ and $3$, respectively.}
\label{ArithPlatSolidsTeichCurveTable}
\end{table}

\subsection{Blocking}
\label{sect:blocking}
Every vertex of a Platonic solid lifts to a unique zero on its unfolding.  Therefore it suffices to prove that there is no closed saddle connection on the associated translation surface. We first give the proof for the tetrahedron (\S\ref{sect:tetra}), as a model for the computer-assisted proof for the other three solids (\S\ref{sect:comp}). Finally, we discuss the relationship with more general blocking problems (\S\ref{sect:blocking cardinality}).

\subsubsection{The tetrahedron}\label{sect:tetra}

We state the proof for the tetrahedron without the aid of a computer.  The proof is well-known to experts on translation surfaces.

\begin{proof}[Proof for the Tetrahedron]
The Teichm\"uller curve of the torus with all $2$-torsion points marked has a single cusp.  Therefore, it suffices to observe that the horizontal direction on the square-torus does not contain a horizontal trajectory passing through exactly one $2$-torsion point.
\end{proof}

\subsubsection{Computer-assisted proof}\label{sect:comp}

This section relies only on readily available functions for square-tiled surfaces in the Sage package \verb|surface_dynamics|.  We use Sage to determine the number of cusps of each of the resulting Teichm\"uller curves.  Then we find distinct directions by inspection that correspond to each of these cusps.  Finally, we observe that none of these directions admit a saddle connection from a zero to itself.  In all cases below the proof is reduced to checking that on each cylinder in the three figures, there does not exist a vertex on the top and bottom of any individual cylinder with the same color.  For this reason it simplifies the verification of the claims to avoid choosing the horizontal direction in the proofs below.  We refer the reader to the auxiliary file \verb|code_from_the_article.ipynb|
to verify the claims of this section.

\begin{proof}[Proof for the Octahedron]
We compute the permutations \verb|oct_perm_1| and \verb|oct_perm_0*oct_perm_1| to give the permutations of the cylinders in the vertical and slope one direction, respectively.  Since there are two distinct cusps by Table \ref{ArithPlatSolidsTeichCurveTable} and every vertical cylinder is given by a permutation that is a product of $3$-cycles, while in the directions with slope one, every cylinder is given by a permutation that is a product of $2$-cycles, we see that these two directions correspond to distinct periodic directions.  By inspection of the colors on the top and bottom of every cylinder in Figure \ref{Octahedron3CoverFig}, we see that none of them connect a zero to itself.  (Simply observe that no zero appears on the top and bottom of every cylinder by comparing the colors on each boundary of the cylinder.)  Therefore, there are no trajectories from a vertex to itself on the octahedron.
\end{proof}

\begin{figure}[ht]
\centering
\includegraphics[scale=.45]{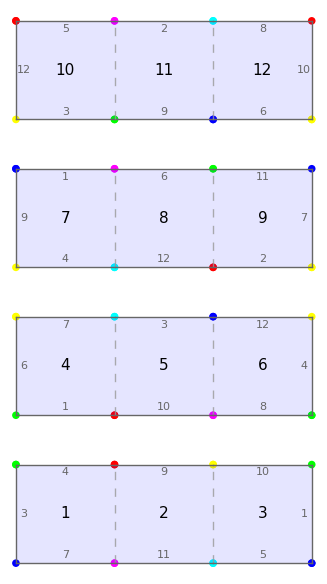}
\caption{The canonical $3$-cover of the octahedron after acting by an element of $GL(2,\bR)$.}
\label{Octahedron3CoverFig}
\end{figure}

\begin{proof}[Proof for the Cube]
The corresponding Teichm\"uller curve has three cusps by Table \ref{ArithPlatSolidsTeichCurveTable}.  The cylinders in the vertical direction are permutations with $4$-cycles by computing \verb|cube_perm_1|, in the direction with slope one they are permutations with $3$-cycles by computing \verb|cube_perm_0*cube_perm_1|, and in the direction with slope $1/2$, they are permutations with $2$-cycles by computing \verb|cube_perm_0*cube_perm_0*cube_perm_1|.  Therefore, these directions account for all three cusps.  Again, by comparing colors on each cylinder in Figure \ref{Cube4CoverFig}, it is clear that there is no closed saddle connection.
\end{proof}

\begin{figure}[ht]
\centering
\includegraphics[scale=.5]{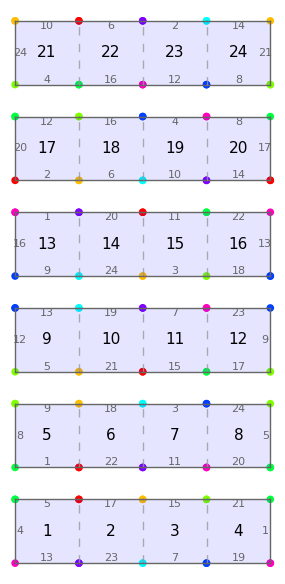}
\caption{The canonical $4$-cover of the cube.}
\label{Cube4CoverFig}
\end{figure}

\begin{proof}[Proof for the Icosahedron]
The corresponding Teichm\"uller curve has three cusps by Table \ref{ArithPlatSolidsTeichCurveTable}.  Computing the three permutations
$$\{\verb|icos_perm_1|, \verb|icos_perm_0*icos_perm_0*icos_perm_0*icos_perm_1|,$$ $$\verb|icos_perm_0*icos_perm_0*icos_perm_1|\},$$
which correspond to directions with slope vertical, $1/3$, and $1/2$, and yield cylinders given by permutations that are products of $5$-cycles, $3$-cycles, and $2$-cycles, respectively, we see that these three directions correspond to each of the three cusps.  As above, it suffices to compare colors within each cylinder in Figure \ref{Icosahedron6CoverFig} to conclude.
\end{proof}

\begin{figure}
\centering
\includegraphics[scale=.9]{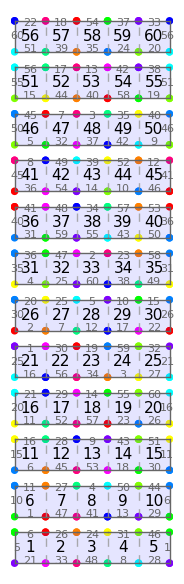}
\caption{The canonical $6$-cover of the icosahedron after acting by an element of $GL(2,\bR)$.}
\label{Icosahedron6CoverFig}
\end{figure}




\subsubsection{Blocking Cardinality}\label{sect:blocking cardinality} We recall from Leli\`evre, Monteil, and Weiss~\cite{LMW} that a pair of points $(x, y)$ on a translation surface $S$ is \emph{finitely blocked} if there exists a finite set  which does not contain $x$ or $y$ and intersects every straight-line trajectory connecting $x$ and $y$. A set with this property is called an \emph{blocking set} for $(x, y)$. The \emph{blocking cardinality} of $(x, y)$ is the minimal size of a blocking set. The above result can be interpreted as giving an upper bound on the blocking cardinality of $(x, x)$ for any singularity $x$ on the translation surfaces arising from unfolding the arithmetic Platonic solids.

Leli\`evre, Monteil, and Weiss~\cite{LMW} show that the property of every pair of points being finitely blocked characterizes arithmetic translation surfaces. While this does not imply our results, it fits well together; closed saddle connections are blocked by other singularities on the arithmetic Platonic solids, but not on the one non-arithmetic one. 

\section{Computing Veech groups of covers}
\label{sect:computing veech groups}

In this section we assume $p:S \to P$ is a branched covering map between translation surfaces. Define
\begin{equation}
\label{eq:X}
Y_S=\SL(2,\bR)/V(S) \quad \text{and} \quad Y_P=\SL(2,\bR)/V(P),
\end{equation}
where we consider these spaces to have basepoints at the coset containing the identity. The closely related projectivizations 
$$X_S = \PSL(2,\bR)/PV(S) \quad \text{and} \quad X_P = \PSL(2,\bR)/PV(P)$$
can be thought of as unit tangent bundles to the the quotients of the Teichm\"uller disk by the Veech groups (called the Teichm\"uller curves in the lattice case). We give an algorithm which gives an understanding of $Y_S$ under some hypotheses on $P$ and the covering map $S \to P$.

Our algorithm is somewhat similar to the algorithm of Schmith\"usen \cite{S04} which does the same for square-tiled surfaces
and an algorithm of Finster~\cite{Finster}
which does the same for translation surfaces with a regular $n$-gon decomposition. 


The algorithm we present is geometric (whereas \cite{S04} and \cite{Finster} take a more algebraic viewpoint). In particular, 
we assume that the computer can store exact presentations of translation surfaces defined over an algebraic number field $F$ defined over $\bQ$, that the computer can compute the image of a translation surface under the action of an element of $\SL(2,F)$, and
the computer can tell if two translation surfaces are the same. \verb|FlatSurf| \cite{flatsurf} can do these things, and
we give a brief explanation of how \verb|FlatSurf| works in Appendix \ref{appendix:flatsurf}. In \S \ref{sect:dodecunfolding},
we implement the algorithm using \verb|FlatSurf| to study the unfolding of the dodecahedron.

\subsection{Hypotheses and conclusions of the algorithm}

Let $p:S \to P$ be a branched covering map and define $Y_S$ and $Y_P$ as in \eqref{eq:X}.
We will assume
\begin{enumerate}
\item[(H0)] The subgroup $V(S)$ is a subgroup of $V(P)$. 
\end{enumerate}

Hypothesis (H0) implies that there is a covering map $\chi: Y_S \to Y_P$.
The fiber over the base point of $Y_P$ is given by the cosets in $V(P)/V(S)$. Observe
that this fiber is in bijective correspondence with images of $S$ under $V(P)$:
\begin{proposition}
\label{prop:injective}
Let $\cH$ be the stratum of $S$. 
The map $V(P)/V(S) \to \cH$, defined so that for each $A \in V(P)$ the coset 
$A V(S)$ maps to $A(S)$, is injective.
\end{proposition}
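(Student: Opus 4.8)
The plan is to recognize this as the orbit--stabilizer correspondence for the action of $V(P)$ on the stratum $\cH$. The group $V(P) \subseteq \SL(2,\bR)$ acts on $\cH$ by restricting the standard linear action on translation surfaces, the base surface $S$ is a point of $\cH$, and the map under consideration is precisely the orbit map $A \mapsto A(S)$ passed to cosets. Thus the whole statement reduces to showing that the stabilizer of $S$ inside $V(P)$ is exactly $V(S)$.

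First I would record well-definedness and the role of the hypothesis. Hypothesis (H0) gives $V(S) \subseteq V(P)$, which is what makes $V(P)/V(S)$ a coset space to begin with. For $B \in V(S)$ we have $(AB)(S) = A\big(B(S)\big) = A(S)$, using that the $\GL(2,\bR)$-action is a genuine left action at the level of charts and that $B(S) = S$ by definition of $V(S)$; hence $A(S)$ depends only on the coset $AV(S)$, and the map is well-defined.

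Next, for injectivity I would argue directly. Suppose $A, A' \in V(P)$ satisfy $A(S) = A'(S)$ as points of $\cH$. Applying $A'^{-1}$ and using that the action descends to $\cH$, I obtain $(A'^{-1}A)(S) = S$ in $\cH$, whence $A'^{-1}A \in V(S)$ since $V(S)$ is by definition the full stabilizer of $S$ in $\SL(2,\bR)$. Therefore $AV(S) = A'V(S)$, which is injectivity.

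The single point that genuinely needs checking---and the main (modest) obstacle---is the equivalence that $C(S) = S$ as points of $\cH$ implies $C \in V(S)$, since the equivalence in the stratum is phrased in terms of biholomorphisms preserving the differential and singular set (allowing permutations of equal-order singularities), whereas $V(S) = D\big(\Aff(S)\big)$. To reconcile these I would compose the tautological affine identification $S \to C(S)$, which is the identity on points and has derivative $C$, with the biholomorphism $C(S) \to S$ witnessing the equality in $\cH$; a biholomorphism carrying one holomorphic $1$-form to the other is locally a translation, hence affine with trivial derivative, so the composite is an affine automorphism of $S$ with derivative $C$. This yields $C \in V(S)$, and with this identification the orbit--stabilizer argument closes the proof.
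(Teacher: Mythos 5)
Your proposal is correct and follows essentially the same route as the paper: well-definedness and injectivity are both reduced to the observation that $A(S)=A'(S)$ if and only if $A^{-1}A' \in V(S)$, exactly as in the paper's two-line argument. The only difference is that you explicitly verify the compatibility between equality of points in the stratum $\cH$ (biholomorphism carrying one $1$-form to the other) and the equality $C(S)=S$ defining membership in $V(S)$ --- a point the paper treats as immediate from its definition of when two translation surfaces are the same --- and your reconciliation of the two notions is correct.
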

\begin{proof}
First observe the map is well defined. If $A V(S)=A' V(S)$, then $A^{-1}A' \in V(S)$ and so 
$A^{-1} A'(S)=S$. Applying $A$ yields $A'(S)=A(S)$.

Now we will show the map is injective. Let $A V(S)$ and $B V(S)$ be cosets and suppose $A(S)=B(S)$. Then $A^{-1} B \in V(S)$,
and $A V(S) = B V(S)$ follows.
\end{proof}

We will also assume
\begin{enumerate}
\item[(H1)] The subgroup $V(S)$ is finite index in $V(P)$. 
\item[(H2)] The group $V(P)$ comes with a finite generating set $\{M_1, \ldots, M_m\}$. 
\end{enumerate}
Statement (H1) means that we can enumerate the cosets in $V(P)/V(S)$, while (H2) ensures we can examine all of the generators.
(Note that some Veech groups are infinitely generated \cite{McM03,HS04}.) 

Our algorithm will produce words $C_1, \ldots, C_n$ in the generators $\{M_1, \ldots, M_m\}$ 
and surfaces $S_1,\ldots, S_n$ so that 
\begin{enumerate}
\item[(C0)] $S_1=S$ and $C_1=I$. 
\item[(C1)] $C_i(S)=S_i$ for all $i \in \{1, \ldots, n\}$. 
\item[(C2)] We have $[V(P):V(S)]=n$ and the collections $\{C_i V(S)\}$ and $\{S_i\}$ enumerate
$V(P)/V(S)$ and the $V(P)$-orbit of $S$, respectively.
\end{enumerate}
Finally, we will compute the left-action of each generator $M_i$ on the cosets and surfaces. For each $M_j$ we will
compute a permutation $m_j$ of $\{1, \ldots, n\}$ satisfying
\begin{enumerate}
\item[(C3)] For each $i$, 
$M_j(S_i)=S_{m_j(i)}$ and 
$M_j C_i V(S) = C_{m_j(i)} V(S)$.
\end{enumerate}
It follows that the permutations $m_j$ can be used to determine the monodromy action of $\pi_1(X_P)$ on 
the fiber $PV(P)/PV(S)$; see Proposition \ref{prop:monodromy}.
In addition these conclusions give access to a finite generating set for $V(S)$:

\begin{proposition}
With input matrices $\{M_j\}$ as above and output matrices $\{C_i\}$ satisfying the conclusions above, $V(S)$ is generated by
$$\{C_{m_j(i)}^{-1} M_j C_i:~ \text{$1 \leq i \leq n$ and $1 \leq j \leq m$}\}.$$
\end{proposition}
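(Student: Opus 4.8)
The plan is to recognize this as the Reidemeister--Schreier theorem, adapted to the left-coset conventions in force here, and to prove it by a direct telescoping argument. It is convenient to abbreviate the proposed generators as $\gamma_{i,j} := C_{m_j(i)}^{-1} M_j C_i$. First I would check that each $\gamma_{i,j}$ actually lies in $V(S)$: conclusion (C3) gives $M_j C_i V(S) = C_{m_j(i)} V(S)$, so $C_{m_j(i)}^{-1} M_j C_i V(S) = V(S)$, whence $\gamma_{i,j} \in V(S)$. The substance of the proposition is the reverse inclusion, that these elements generate all of $V(S)$.

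For that I would exploit the left-multiplication action of $V(P)$ on the finite set of left cosets $V(P)/V(S)$, which by conclusion (C2) is in bijection with $\{1,\ldots,n\}$ via $C_i V(S) \leftrightarrow i$. This action yields a homomorphism $\mu : V(P) \to S_n$, and (C3) says precisely that $\mu(M_j) = m_j$. Since $C_1 = I$ (conclusion (C0)), the coset $C_1 V(S)$ is $V(S)$ itself, and so an element $g \in V(P)$ belongs to $V(S)$ exactly when $\mu(g)$ fixes the index $1$. Now take an arbitrary $h \in V(S)$ and, using the generating set from (H2), write $h = M_{j_1}^{\epsilon_1} \cdots M_{j_\ell}^{\epsilon_\ell}$ with $\epsilon_k \in \{\pm 1\}$. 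Reading the word from its right-hand end, set $s_k = M_{j_k}^{\epsilon_k} \cdots M_{j_\ell}^{\epsilon_\ell}$ (so $s_1 = h$ and $s_{\ell+1}$ is the identity), and let $t_k$ be the index with $s_k V(S) = C_{t_k} V(S)$; since $h \in V(S)$ and $s_{\ell+1}$ is trivial we have $t_1 = t_{\ell+1} = 1$.

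The telescoping is then governed by the auxiliary elements $u_k := C_{t_k}^{-1} s_k \in V(S)$, which satisfy $u_1 = h$, $u_{\ell+1} = I$, and, using $s_k = M_{j_k}^{\epsilon_k} s_{k+1}$, the recursion $u_k = w_k\, u_{k+1}$ with $w_k = C_{t_k}^{-1} M_{j_k}^{\epsilon_k} C_{t_{k+1}}$. Iterating gives
\[
h = u_1 = \prod_{k=1}^{\ell} w_k .
\]
It remains to identify each $w_k$ as one of the $\gamma_{i,j}^{\pm 1}$. Tracking cosets through $\mu(M_{j_k}) = m_{j_k}$ shows that when $\epsilon_k = +1$ one has $t_k = m_{j_k}(t_{k+1})$, so $w_k = C_{m_{j_k}(t_{k+1})}^{-1} M_{j_k} C_{t_{k+1}} = \gamma_{t_{k+1},\,j_k}$; and when $\epsilon_k = -1$ one has $t_{k+1} = m_{j_k}(t_k)$, so $w_k = C_{t_k}^{-1} M_{j_k}^{-1} C_{t_{k+1}} = \gamma_{t_k,\,j_k}^{-1}$. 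This writes $h$ as a word in the $\gamma_{i,j}^{\pm 1}$ and finishes the argument.

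The only genuinely delicate point is the left/right bookkeeping. Because the paper works with left cosets $V(P)/V(S)$ and the monodromy is a \emph{left} action (Proposition \ref{prop:monodromy}), one must expand $h$ from its right-hand end, so that each successive partial product $s_k$ is obtained from $s_{k+1}$ by left multiplication, matching the direction in which the permutations $m_j$ act on coset indices. Once that convention is pinned down, the membership criterion and the final case check ($w_k = \gamma_{t_{k+1},j_k}$ versus $\gamma_{t_k,j_k}^{-1}$) are routine; getting the handedness consistent with the surrounding conventions is the main thing to be careful about.
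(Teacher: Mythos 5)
Your proof is correct and takes essentially the same approach as the paper's: both expand an element of $V(S)$ as a word in the $M_j^{\pm 1}$ read from the right-hand end (matching the left-coset conventions), track the coset index via the permutations $m_j$ using (C3), and telescope the auxiliary elements $C_{t_k}^{-1}s_k$ (the paper's $h_k = C_{i(k)}^{-1}g_k$) into a product of the proposed generators and their inverses. The only difference is bookkeeping: the paper defines its partial products by the recursion and proves $h_k = C_{i(k)}^{-1}g_k$ by induction, whereas you define $u_k := C_{t_k}^{-1}s_k$ directly and derive the recursion.
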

\begin{proof}
First of all observe that any $C_{m_j(i)}^{-1} M_j C_i \in V(S)$ as a consequence of (C3). Now suppose $g \in V(S)$.
Since $V(S) \subset V(P)$ and $\{M_1, \ldots, M_m\}$ generates $V(P)$, there is a choice of $g_0, \ldots, g_K \in V(P)$
so that $g_0=I$, $g_K=g$ and 
$$g_k = M_{j(k)} g_{k-1} \quad \text{or} \quad g_k=M_{j(k)}^{-1} g_{k-1}
\quad \text{for some $j(k)$ for each $k=1, \ldots, K$}.$$
Consider the path on $V(S)/V(P)$ given by $k \mapsto g_k V(S)$
for $k \in \{1, \ldots, K\}$.
From conclusion (C3) we know for each $k$ there is an $i$ so that
$g_k V(S)=C_{i} V(S)$. The index $i$ associated to $k$ can be determined inductively using the permutations $m_j$. Set $i(0)=1$ and inductively define
$$i(k)= \begin{cases}
m_j \circ i(k-1) & \text{if $g_k = M_{j(k)} g_{k-1}$} \\
m_j^{-1} \circ i(k-1) & \text{if $g_k = M_{j(k)}^{-1} g_{k-1}$} \\
\end{cases}
\quad 
\text{for $k \in \{1, \ldots, K\}$.}
$$
By induction observe with this definition we have that $g_k V(S)=C_{i(k)} V(S)$ for all $k$ using (C3). Now define the sequence of group elements of $V(S)$ inductively using the claimed generators by the rules that
$h_0=I$ and 
$$h_k = 
\begin{cases}
C_{i(k)}^{-1} M_j C_{i(k-1)} h_{k-1} & \text{if $g_k = M_{j(k)} g_{k-1}$} \\
C_{i(k)}^{-1} M_j^{-1} C_{i(k-1)} h_{k-1} & \text{if $g_k = M_{j(k)}^{-1} g_{k-1}$} \\
\end{cases}
\quad 
\text{for $k \in \{1, \ldots, K\}$,}
$$
noting that we are left multiplying $h_{k-1}$ by one of the generators in our list or its inverse.
Now by induction we can see that $h_k=C_{i(k)}^{-1} g_k$ for each $k$. Since $g=G_K \in V(S)$, we know
that $g_K(S_1)=S_1$ and so $i(K)=0$. Specializing to this case we see $h_K=C_1^{-1} g_K=g_K$ since $C_1=I$. 
Thus our expression for $h_K$ gives $g$ as a product of generators and their inverses.
\end{proof}

\subsection{The algorithm}
\label{sect:algorithm}

\noindent{\bf Algorithm 1:}
\begin{itemize}
\item Let $n=1$, let $S_1=S$, and $C_1=I$.\hfill $1$
\item Let $NF=\{1\}$ be a modifiable set of indices of newly found surfaces. \hfill $2$
\item Define $m_1, \ldots, m_k$ to be dictionaries (finite editable maps). \hfill $3$
\item While $NF$ is non-empty:\hfill $4$
\begin{itemize}
\item Remove an index $i$ from $NF$.\hfill $5$
\item For each $j \in \{1, \ldots, m\}$:\hfill $6$
\begin{itemize}
\item Set $S'=M_j(S_i)$.\hfill $7$
\item If $S'$ appears in the list of surfaces $S_1, \ldots, S_n$:\hfill $8$
\begin{itemize}
\item Set $k$ so that $S'=S_k$.\hfill $9$
\item Define $m_j(i)=k$.\hfill $10$
\end{itemize}
\item Otherwise:\hfill $11$
\begin{itemize}
\item Increment $n$ and define $S_n=S'$ and set $C_n=M_j C_i$.\hfill $12$
\item Add $n$ to the set $NF$.\hfill $13$
\item Define $m_j(i)=n$. \hfill $14$
\end{itemize}
\end{itemize}
\end{itemize}
\end{itemize}

\begin{proposition}
If the hypotheses (H0)-(H2) are satisfied then the algorithm terminates and conclusions (C0)-(C3) are satisfied.
\end{proposition}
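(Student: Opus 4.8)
The plan is to verify termination first and then establish the conclusions, proceeding by induction on the order in which surfaces are discovered. The central structural fact I would isolate at the outset is that, by Proposition \ref{prop:injective}, the correspondence $A V(S) \mapsto A(S)$ identifies the coset space $V(P)/V(S)$ with the $V(P)$-orbit of $S$ inside the stratum $\cH$; combined with hypothesis (H1), this orbit is finite of size $[V(P):V(S)]$. Since the algorithm only ever records a surface $S'$ when it does not already appear among $S_1, \ldots, S_n$ (line 8), the list of surfaces consists of distinct elements of this finite orbit, so $n$ never exceeds $[V(P):V(S)]$. For termination of the while loop, I would note that an index is added to $NF$ only when a genuinely new surface is found (line 13), so at most $[V(P):V(S)]$ indices are ever inserted; each pass of the loop removes one index (line 5) that is never reinserted, so the loop halts after finitely many passes.

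Next I would verify (C0) and (C1) together. Conclusion (C0) is immediate from the initialization (line 1). For (C1), I argue by induction on $n$: the base case $C_1(S) = I(S) = S = S_1$ holds, and whenever a new surface is created at line 12 we have $S_n = M_j(S_i)$ and $C_n = M_j C_i$, so using the inductive hypothesis $C_i(S) = S_i$ and that the $\GL(2,\bR)$-action is a genuine left action, $C_n(S) = M_j\big(C_i(S)\big) = M_j(S_i) = S_n$. For (C3), I observe that whenever $m_j(i)$ is assigned (lines 10 and 14) it is set to the index with $M_j(S_i) = S_{m_j(i)}$, which is the surface half of (C3); the coset half follows from (C1), since $M_j C_i(S) = M_j(S_i) = S_{m_j(i)} = C_{m_j(i)}(S)$ shows that $C_{m_j(i)}^{-1} M_j C_i$ fixes $S$ and hence lies in $V(S)$, giving $M_j C_i V(S) = C_{m_j(i)} V(S)$. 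By termination, every index entering the list is eventually removed from $NF$ and run through the inner loop over all $j$, so $m_j(i)$ is defined and (C3) holds for every $i \in \{1, \ldots, n\}$ and every $j$.

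The main obstacle — and the only genuinely nontrivial point — is conclusion (C2), namely that $\{S_1, \ldots, S_n\}$ is the \emph{entire} $V(P)$-orbit of $S$ rather than merely a forward-invariant subset reachable by positive words in the $M_j$. This is where a breadth-first search using only the generators (and not their inverses) could in principle fail. The resolution I would use is that closure forces full invariance on a finite set: by (C3) and termination, $\{S_1, \ldots, S_n\}$ is invariant under each $M_j$, so each $m_j$ is a self-map of the finite set $\{1, \ldots, n\}$; since $M_j$ acts injectively on surfaces, $m_j$ is injective and hence a permutation, so the set is also invariant under $M_j^{-1}$. As the $M_j$ generate $V(P)$ by (H2), the set is $V(P)$-invariant; containing $S_1 = S$, it contains the whole orbit, while conversely each $S_i = C_i(S)$ lies in the orbit. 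Thus $\{S_1, \ldots, S_n\}$ equals the $V(P)$-orbit, and by the identification from Proposition \ref{prop:injective} we conclude $n = [V(P):V(S)]$ with $\{C_i V(S)\}$ enumerating $V(P)/V(S)$, completing (C2).
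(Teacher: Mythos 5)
Your proposal is correct, and its treatment of termination, (C0), (C1), and (C3) coincides with the paper's proof (induction on the creation of $C_n, S_n$ for (C1); reading off $m_j$ from lines 10/14 and passing to cosets via $C_{m_j(i)}^{-1} M_j C_i \in V(S)$ for (C3)). The one place where you take a genuinely different route is (C2), which you correctly single out as the only nontrivial point: the worry that a breadth-first search using only positive words in the generators might produce a merely forward-invariant proper subset of the orbit. You resolve this by noting that each $m_j$ is an injective self-map of the finite index set $\{1,\dots,n\}$ (injectivity coming from the distinctness of the $S_i$ and the invertibility of the $M_j$-action on the stratum), hence a permutation, so the discovered set is closed under the $M_j^{-1}$ as well and is therefore $V(P)$-invariant; containing $S$, it must be the whole orbit. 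The paper instead works at the level of the coset space: it observes that the \emph{semigroup} generated by $\{M_j\}$ already acts transitively on the finite set $V(P)/V(S)$, because the homomorphism $\rho: V(P) \to \mathrm{Perm}\big(V(P)/V(S)\big)$ has finite image, so the image of the semigroup coincides with the image of the group; then (C3) makes $\{C_i V(S)\}$ invariant under the generators, and transitivity forces equality. The two arguments exploit the same finiteness principle (in a finite setting, closure under generators implies closure under the group) but via different elementary lemmas: yours — an injective self-map of a finite set is a bijection — is arguably more self-contained and works directly on the algorithm's output, while the paper's formulation isolates a reusable statement about semigroup actions on finite coset spaces. Both are complete proofs.
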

\begin{proof}
To see that the algorithm terminates, observe there is at most one index created for each surface in the $V(P)$ orbit of $S$.
Thus the total number of indices created is finite by (H1). 
Each index is then placed in $NF$ at most once, so the total number of times the while loop is run is finite. The inner loop is also clearly finite by (H2), so the algorithm terminates in finite time. 

Statement (C0) is set to hold by the first statement in the algorithm. 

Statement (C1) can be observed by induction. 
Statement (C0) gives the base case, while whenever we define a new $C_n$ and $S_n$ in line $12$ we have
$S_n=S'=M_j(S_i)$ and $C_n=M_jC_i$ providing the inductive step.

Once the algorithm terminates, we will have iterated over every created index exactly once. The inner loop will then have iterated over every $i \in \{1, \ldots, n\}$ and every $j \in \{1, \ldots, m\}$. The inner loop always defines $m_j(i)$ either in line 10 or line 14. In either case, we define $m_j(i)$ so that it satisfies $S'=S_{m_j(i)}=M_j S_i$. This verifies the first part of (C3).
To see the second part observe that using (C1) we have that $M_j(S_i)=S_{m_j(i)}$ implies $M_j C_j(S) = C_{m_j(i)}(S)$ and so
$C_{m_j(i)}^{-1} M_j C_j \in V(S)$ and therefore $M_j C_i V(S) = C_{m_j(i)} V(S)$.

To see statement (C2) first observe that the surfaces $S_1, \ldots, S_n$ are distinct. This is because we never introduce a new surface (in line $12$) unless it is distinct from the previously indexed surfaces. Proposition \ref{prop:injective} combined with (C1) then guarantees that the cosets $C_1 V(S), \ldots, C_n V(S)$ are all distinct. 

We will now show that $C_1 V(S), \ldots, C_n V(S)$ enumerates $V(P)/V(S)$. 
Observe that because $V(P)/V(S)$ is finite, the semigroup generated by $\{M_j\}$ acts transitively on 
$V(P)/V(S)$. (The left action of $V(P)$ acts transitively on $V(P)/V(S)$ and gives a group homomorphism $\rho: V(P) \to \mathrm{Perm}\big(V(P)/V(S)\big)$. Since $\{M_j\}$ generates $V(P)$
and $\mathrm{Perm}\big(V(P)/V(S)\big)$ is a finite group, the image under $\rho$ of the generated semigroup coincides with $\rho\big(V(P)\big)$ and is therefore also transitive.)
Statement (C3) implies that the collection of cosets $\{C_i V(S):~0 \leq i \leq n\}$ is 
invariant under each of the generators $M_j$, so by the transitivity of the semigroup action, we see that $\{C_i V(S)\}=V(P)/V(S)$,
proving our statement about coset enumeration. Since we have enumerated the cosets and $S_i=C_i(S)$ for all $i$,
the set $\{S_1, \ldots, S_n\}$ enumerates the $V(P)$-orbit of $S$. This proves that (C2) holds.
\end{proof}

We remark that by Corollary \ref{cor:finite index}, hypotheses (H0)-(H2) hold if $P$ is primitive, of genus larger than one,
has Veech's lattice property, the singular set of $P$ is the set of zeros of the associated holomorphic $1$-form, and $p:S \to P$ is a translation covering.
In particular, our algorithm can be applied to any translation surface admitting an regular $n$-gon decomposition with $n \not \in \{3,4,6\}$. Here the covering $S \to \Pi_n$ guaranteed by Proposition \ref{prop: natural covering}
satisfies the hypotheses of the algorithm; see \S \ref{sect:decompositions} and \S \ref{sect:additional}.
The remaining cases of $n \in \{3,4,6\}$ are the arithmetic cases and the hypotheses of the algorithm are still true if $P$ is taken 
as in Theorem \ref{thm:square tiled coverings}.

\section{The dodecahedron unfolding}
\label{sect:dodecunfolding}

In this section, we carry out the algorithm described in \S \ref{sect:computing veech groups} to understand the Veech group and Teichm\"uller curve of the unfolding of the dodecahedron. We make use of \verb|FlatSurf| which once installed following directions provided in
\cite{flatsurf} can be started in SageMath using the command:
\begin{verbatim}
from flatsurf import *
\end{verbatim}

\subsection{The number field}

The regular pentagon with side length two, with one vertex at the origin, and with a horizontal side has coordinates which are algebraic integers in the number field $F={\mathbb Q}(s)$ where $s=2 \sin \frac{\pi}{5}$.

The following code defines $s$ as an algebraic number (as \verb|s_AA|), defines
the number field $F$ and $s$ as a member of this number field.
\begin{verbatim}
s_AA = AA(2*sin(pi/5))
F.<s> = NumberField(s_AA.minpoly(), embedding=s_AA)
\end{verbatim}

\subsection{The pentagons}

The {\em top regular pentagon} is the one whose bottom side is horizontal. Edges are labeled by $\{0,1,2,3,4\}$ with edge $0$ horizontal. The following code defines the top pentagon with side length two:
\begin{verbatim}
pentagon_top = 2 * polygons.regular_ngon(5, field=F)
\end{verbatim}
The bottom pentagon with side length two may be defined by
\begin{verbatim}
pentagon_bottom = (-1) * pentagon_top
\end{verbatim}

\subsection{The double pentagon}

The double pentagon $\Pi_5$ is the translation surface built from two regular pentagons depicted in Figure \ref{DoublePentConvs}. As above we normalize the geometry of $\Pi_5$ so that all edges of the regular pentagons have length two.

As stated in Proposition \ref{prop:veech group}, the Veech group of $\Pi_5$ has the form:
\begin{equation}
\label{eq:VeechGroupGenerators}
V(\Pi_5)=\langle R, T \rangle \quad \text{and} \quad V_\pm (\Pi_5)=\langle R, T, J\rangle
\end{equation}
where $R$, $T$ and $J$ denote the matrices
\begin{equation}
\label{eq:RTJ}
R = \left(\begin{array}{rr}
\cos \frac{\pi}{5} & -\sin \frac{\pi}{5} \\
\sin \frac{\pi}{5} & \cos \frac{\pi}{5}
\end{array}\right),
\quad
T = \left(\begin{array}{rr}
1 & 2 \cot \frac{\pi}{5} \\
0 & 1
\end{array}\right),
\quad_{}
J = \left(\begin{array}{rr}
1 & 0 \\
0 & -1
\end{array}\right).
\end{equation}
The matrices satisfy the identities
\begin{equation}
\label{eq:identities}
R^5=-I, \quad (RT^{-1})^2=-I, \quad \quad J^2=I, \quad JR=R^{-1}J, \quad JT = T^{-1}J,
\end{equation}
and these relations together with the generators listed in \eqref{eq:VeechGroupGenerators}
describe $V(\Pi_5)$ and $V_\pm(\Pi_5)$ as abstract groups (noting that $-I$ is an involution lying in the center of the Veech groups).

We define the matrices $R$, $T$ and $J$ into SageMath using:
\begin{verbatim}
R = Matrix(F, [[ cos(pi/5), -sin(pi/5) ],
               [ sin(pi/5),  cos(pi/5) ]] )
T = Matrix(F, [[ 1, 2*cot(pi/5) ],
               [ 0,           1 ]] )
J = Matrix(F, [[ 1,  0 ],
               [ 0, -1 ]] )
\end{verbatim}

\begin{figure}
\centering
\includegraphics[height=2in]{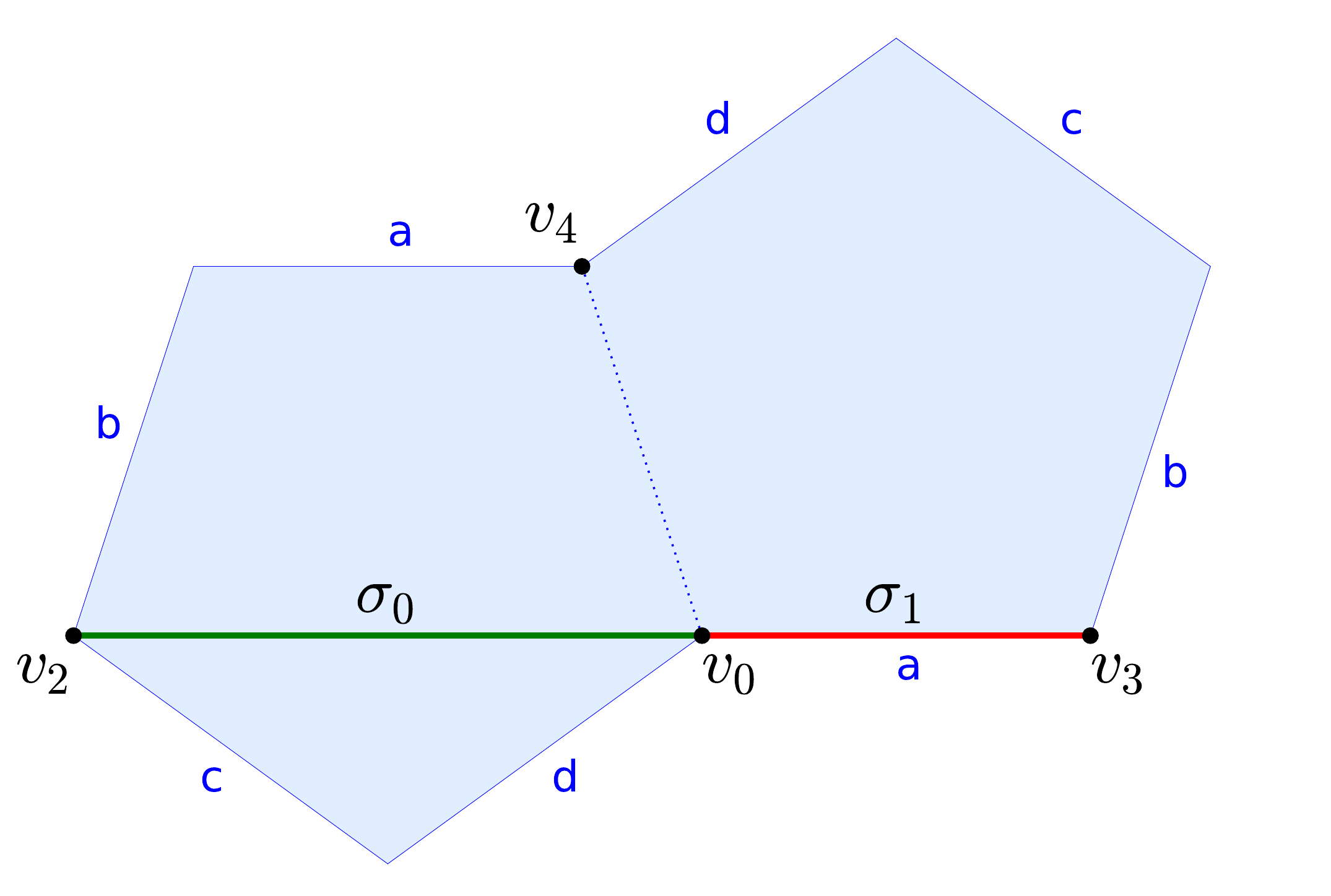}
\caption{The double pentagon $\Pi_5$ together with the saddle connections $\sigma_0$ and
$\sigma_1$.}
\label{DoublePentConvs}
\end{figure}

Observe that because $\Pi_5$ has no translation automorphisms, the derivative map $D:\Aff_\pm(\Pi_5)\to \GL(2,\bR)$
is a group isomorphism onto its image $V_\pm(\Pi_5)$. The projectivized derivative map $P D:\Aff_\pm(\Pi_5) \to \PGL(2,\bR)$ is 
a $2$-$1$ map onto its image $P V_\pm(\Pi_5)$.

\begin{proposition}
\label{SaddConnReductionProp}
Let $\sigma_0$ and $\sigma_1$ be the saddle connections depicted in Figure \ref{DoublePentConvs} and let $\sigma$ be any saddle connection on $\Pi_5$.
We view these saddle connections as lacking orientation. For $i \in \{0,1\}$ define 
$$\Aff_\pm(\sigma_i, \sigma)=\{f \in \Aff_\pm(\Pi_5):~f(\sigma_i)=\sigma\}.$$
Then precisely one of these two sets is non-empty. Furthermore if $\Aff_\pm(\sigma_i, \sigma)$ is non-empty, then
$$P D\big(\Aff_\pm(\sigma_i, \sigma)\big) \in PV_\pm(\Pi_5)/ \langle T, J \rangle.$$
\end{proposition}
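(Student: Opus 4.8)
The plan is to prove the two assertions in turn: first that $\sigma_0$ and $\sigma_1$ represent the two distinct $\Aff_\pm(\Pi_5)$-orbits of (unoriented) saddle connections, so that for a given $\sigma$ exactly one of the sets $\Aff_\pm(\sigma_i,\sigma)$ is non-empty; and second that, when non-empty, such a set is a single left coset of the stabilizer of $\sigma_i$, whose projective derivative image equals $\langle T, J\rangle$. Throughout I regard both $\sigma_0$ and $\sigma_1$ as horizontal (which is forced, since $\langle T, J\rangle\subset PV_\pm(\Pi_5)$ is precisely the projective stabilizer of the horizontal direction), and I write $\tau,\rho,\eta\in\Aff_\pm(\Pi_5)$ for the affine automorphisms with derivatives $T$, $J$, and $-I$; recall $\eta$ is the hyperelliptic involution, which for odd $n$ interchanges the two pentagons (see \S\ref{sect:weierstrass}).

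For the first assertion I would reduce an arbitrary saddle connection to a horizontal one. By the Veech Dichotomy (Theorem~\ref{theorem:veech}) any direction carrying a saddle connection is periodic, and since $V(\Pi_5)$ is the $(2,5,\infty)$ triangle group it has a single cusp; hence $V_\pm(\Pi_5)$ acts transitively on periodic directions (concretely, via powers of the rotation $R$), and every saddle connection is carried by some element of $\Aff_\pm(\Pi_5)$ to a horizontal one. I would then enumerate the horizontal saddle connections directly: the single zero has cone angle $6\pi$, so there are exactly $6$ horizontal separatrices and therefore exactly $3$ horizontal saddle connections, namely the glued horizontal edge and the two horizontal pentagon diagonals. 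The involution $\eta$ swaps the two diagonals, while the edge is the unique horizontal saddle connection of minimal length; thus the horizontal-direction stabilizer has exactly two orbits of horizontal saddle connections, represented by the edge and a diagonal, i.e. by $\sigma_0$ and $\sigma_1$ of Figure~\ref{DoublePentConvs}. This yields existence (and, with Theorem~\ref{theorem:finiteorbits}, the count is exactly two). For uniqueness I would observe that any $g\in\Aff_\pm(\Pi_5)$ carrying one horizontal saddle connection to another must fix the horizontal direction, so $Dg$ lies in the cusp stabilizer of the lattice $V_\pm(\Pi_5)$; such elements are parabolic or reflections and have top-left entry $\pm 1$, hence preserve horizontal length. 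As $\sigma_0$ and $\sigma_1$ have different lengths, no such $g$ identifies them, so precisely one of $\Aff_\pm(\sigma_0,\sigma)$, $\Aff_\pm(\sigma_1,\sigma)$ is non-empty.

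For the second assertion, fix any $f_0\in\Aff_\pm(\sigma_i,\sigma)$; then $f\mapsto f_0^{-1}f$ identifies $\Aff_\pm(\sigma_i,\sigma)$ with the stabilizer $\mathrm{Stab}(\sigma_i)=\Aff_\pm(\sigma_i,\sigma_i)$, so $\Aff_\pm(\sigma_i,\sigma)=f_0\cdot\mathrm{Stab}(\sigma_i)$ and it suffices to show $PD\big(\mathrm{Stab}(\sigma_i)\big)=\langle T,J\rangle$. The inclusion $\subseteq$ is immediate: anything fixing $\sigma_i$ fixes its horizontal direction, so its projective derivative lies in the projective cusp stabilizer $\langle T,J\rangle$. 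For $\supseteq$ I would exhibit stabilizing automorphisms with the right derivatives: the parabolic $\tau$ acts as an integer power of the Dehn twist in each horizontal cylinder and hence fixes every horizontal saddle connection pointwise, giving $[T]$; and exactly one of the reflections $\rho$, $\eta\rho$ (derivatives $J$ and $-J$, projectively equal) fixes $\sigma_i$ — for the edge because it is the unique shortest horizontal saddle connection, and for the diagonal because $\eta$ swaps the two diagonals — giving $[J]$. Therefore $PD\big(\mathrm{Stab}(\sigma_i)\big)=\langle T,J\rangle$, and $PD\big(\Aff_\pm(\sigma_i,\sigma)\big)=PD(f_0)\,\langle T,J\rangle$ is a single coset.

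The main obstacle is the concrete geometric input feeding both assertions: pinning down the horizontal cylinder decomposition of $\Pi_5$ precisely enough to list its three horizontal saddle connections, to recognize the edge as the unique shortest one, and to verify that $\eta$ interchanges the two diagonals while $\tau$ and a suitable reflection each stabilize $\sigma_i$. Once these flat-geometric facts are in hand, the remainder is formal, flowing from the lattice property, the single cusp of $V(\Pi_5)$, and the elementary observation that direction-fixing elements of the lattice preserve horizontal length.
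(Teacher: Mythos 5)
Your proposal is correct and follows essentially the same route as the paper's proof: use the lattice property and the single cusp of $V(\Pi_5)$ to move any saddle connection to the horizontal direction, identify the three horizontal saddle connections (the edge and the two diagonals, the latter swapped by the hyperelliptic involution), rule out both sets being non-empty by discreteness of the horizontal-direction stabilizer, and finish by computing the stabilizer of $\sigma_i$ to obtain the coset statement. Your length-preservation phrasing of the exclusivity step (cusp-stabilizer elements have diagonal entries $\pm 1$) is the same underlying fact as the paper's non-discreteness contradiction involving the eigenvalue $\pm\varphi$, and your two-inclusion computation of $PD\big(\mathit{Stab}(\sigma_i)\big)$ merely spells out what the paper asserts by inspection, namely $D\big(\mathit{Stab}(\sigma_0)\big)=\langle T,-J\rangle$ and $D\big(\mathit{Stab}(\sigma_1)\big)=\langle T,J,-I\rangle$ (only the non-load-bearing parenthetical claiming transitivity on periodic directions is realized ``via powers of $R$'' is inaccurate, since transitivity really comes from the full group having one cusp).
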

\begin{proof}
Fix $\sigma$. Because $V(\Pi_5)$ is a lattice, $\sigma$ must be stabilized by an affine automorphism $g:\Pi_5 \to \Pi_5$ with parabolic derivative. Since the quotient 
$\bH^2/PV(\Pi_5)$ has one cusp (represented by the conjugacy class of $T$ in our generating set), we have $D(g)= \pm M T^n M^{-1}$ for some $n \in \bZ$
and some matrix $M$ in $V(\Pi_5)$. Then there is an $f \in \Aff(\Pi_5)$ with $Df=M$ which sends the horizontal direction to the direction of $\sigma$. 
Observe $\sigma'=f^{-1}(\sigma)$ is a horizontal saddle connection. By inspection we see that $\sigma' \in \{\sigma_0, \sigma_1, -\sigma_0\}$
where $-\sigma_0$ denotes the image of $\sigma_0$ under the hyperelliptic involution. If $\sigma'=\sigma_0$ we have shown 
$M \in D \Aff_\pm(\sigma_0, \sigma)\big)$, if $\sigma'=\sigma_1$ we have shown 
$M \in D \Aff_\pm(\sigma_1, \sigma)\big)$ and if $\sigma'=-\sigma_0$ we have shown 
$-M \in D \Aff_\pm(\sigma_0, \sigma)\big)$. In any case, one of the sets $\Aff_\pm(\sigma_i, \sigma)\big)$ is non-empty. 

To see that only one of the sets $\Aff_\pm(\sigma_i, \sigma)\big)$ is non-empty, suppose to the contrary that
$M_i \in D \Aff_\pm(\sigma_i, \sigma)\big)$ exists for $i \in \{0,1\}$. Then $M_1 \circ M_0^{-1}$ has a horizontal eigenvector with eigenvalue $\pm \varphi=\pm \frac{1+\sqrt{5}}{2}$. But then the subgroup $\langle M_0 \circ M_1^{-1}, T \rangle \subset V_\pm(\Pi_5)$ is not discrete, which is a contradiction.

Consider the final statement. From the first paragraph we have some $\pm M \in D\big(\Aff_\pm(\sigma_i, \sigma)\big)$ for some $i$.
Let $\mathit{Stab}(\sigma_i)$ denote the set of all $f \in \Aff_\pm(\Pi_5)$ so that $f(\sigma_i)=\sigma_i$ as a non-oriented saddle connection. Observe that
$$
D \circ \mathit{Stab}(\sigma_0)=\langle T, -J\rangle
\quad \text{and} \quad 
D \circ \mathit{Stab}(\sigma_1)=\langle T, J, -I \rangle.
$$
Then since the projectivizations of these subgroups coincide we have
$$P D\big(\Aff_\pm(\sigma_i, \sigma)\big) = M \cdot P D\big( \mathit{Stab}(\sigma_i) \big) = M \cdot \langle T, J \rangle \in  PV_\pm(\Pi_5)/ \langle T, J \rangle.$$
\end{proof}

\subsection{The unfolding}
We will explain how to enter the unfolding $\tilde D$ of the dodecahedron in \verb|FlatSurf| using the description of $\tilde D$ provided in \S \ref{sect:unfoldings}.

The following code tells \verb|FlatSurf| that we will build a combinatorial surface
by gluing together edges of polygons whose vertices lie in $F^2$:
\begin{verbatim}
S = Surface_dict(F)
\end{verbatim}

The polygons making up $\tilde D$ are top and bottom pentagons
indexed by pairs in $\{0, \ldots, 9\} \times \{0, \ldots, 11\}$ and the list produced by \verb|double_pent_top()| tells us which indices correspond to
top pentagons; see \S \ref{sect:labeling sheets}. We now associate the correct pentagon to each index:
\begin{verbatim}
top_indices = double_pent_top()
for sheet in range(10):
    for pent in range(12):
        if (sheet, pent) in top_indices:
            S.add_polygon(pentagon_top, label=(sheet,pent))
        else:
            S.add_polygon(pentagon_bottom, label=(sheet,pent))
\end{verbatim}
Also \verb|FlatSurf| requires our surfaces to have a base label:
\begin{verbatim}
S.change_base_label((0,0))
\end{verbatim}

The function \verb|build_adj_dodec| of Example \ref{ex:dodecahedron} describes how the edges of the pentagons should be glued. The code tells \verb|FlatSurf| about these gluings:
\begin{verbatim}
for sheet in range(10):
    for pent in range(12):
        adj = build_adj_dodec(sheet,pent)
        for edge in range(5):
            S.change_edge_gluing((sheet,pent), edge, tuple(adj[edge]), edge)
\end{verbatim}
This completes the definition of the combinatorial surface $S$. To make it so it can no longer be changed:
\begin{verbatim}
S.set_immutable()
\end{verbatim}
To define $\tilde D$ (or \verb|Dtilde|) to be a translation surface based on the gluings described by $S$ we enter:
\begin{verbatim}
Dtilde = TranslationSurface(S)
\end{verbatim}

\subsection{Monodromy}
\label{sect:dodecahedron monodromy}
We will carry out the algorithm described in \S \ref{sect:computing veech groups} in the case of the covering $\tilde D \to \Pi_5$.

We begin by checking the hypotheses. Observe that $\tilde D \to \Pi_5$ is a translation covering and $\Pi_5$ is a primitive lattice surface so that by Corollary \ref{cor:finite index}, $V(\tilde D)$ is a finite index subgroup of $V(\Pi_5)$. This verifies hypotheses (H0) and (H1). As remarked earlier we have $V=\langle R,T\rangle$ verifying (H2). Thus, we can run the algorithm of \S \ref{sect:computing veech groups}.

As in \S \ref{sect:computing veech groups}, set 
$$Y_{\Pi_5}=\SL(2,\bR)/V(\Pi_5) \quad \text{and} \quad Y_{\tilde D}=\SL(2,\bR)/V(\tilde D)$$
and recall that we get a covering map $Y_{\tilde D} \to Y_{\Pi_5}$. The group $V(\Pi_5)$ acts on the fiber over the base point of $Y_{\tilde D} \to Y_{\Pi_5}$. In this special case, we see that $V_\pm(\Pi_5)$ also acts on this fiber and we can explicitly find the action of $J \in V_\pm(\Pi_5)$ from the information about the action of $T$ and $R$:
\begin{proposition}
\label{prop:j}
Let $\varphi:V(\Pi_5) \to V(\Pi_5)$ be the automorphism defined so that
$\varphi(T)=T^{-1}$ and $\varphi(R)=R^{-1}$. Then for any $A \in V(\Pi_5)$ we have
$$J A(\tilde D) = \varphi(A)(\tilde D).$$
In particular, $V(\Pi_5)(\tilde D)$ is also the orbit of $\tilde D$ under $V_{\pm}(\Pi_5) \subset \GL(2,\bR)$. \end{proposition}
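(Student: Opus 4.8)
The plan is to recognize $\varphi$ as conjugation by $J$ and thereby reduce the whole proposition to the single geometric fact that $J\in V_\pm(\tilde D)$, i.e.\ that $J(\tilde D)=\tilde D$. First I would read off from the relations $JR=R^{-1}J$ and $JT=T^{-1}J$ in \eqref{eq:identities} that conjugation by $J$ sends $R\mapsto R^{-1}$ and $T\mapsto T^{-1}$; since $\{R,T\}$ generates $V(\Pi_5)$ and $J=J^{-1}$, this shows $\varphi(A)=JAJ^{-1}$ for every $A\in V(\Pi_5)$. Consequently $\varphi(A)(\tilde D)=(JAJ^{-1})(\tilde D)=JA\big(J^{-1}(\tilde D)\big)$, so the asserted identity $JA(\tilde D)=\varphi(A)(\tilde D)$ holds for all $A$ if and only if $J^{-1}(\tilde D)=\tilde D$, equivalently $J(\tilde D)=\tilde D$.

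The heart of the argument is thus to prove that the horizontal reflection $J$ is the derivative of an orientation-reversing affine automorphism of $\tilde D$. I would obtain this from the reflective symmetry of the dodecahedron together with the naturality of the unfolding. Since $J\in V_\pm(\Pi_5)$ there is $j\in\Aff_\pm(\Pi_5)$ with $Dj=J$, namely the reflection exchanging the two pentagons of $\Pi_5$. As $\pi_\Pi:\tilde D\to\Pi_5$ is a regular covering (Theorem~\ref{NormCov}) whose derivative is a dilation, any lift of $j$ through $\pi_\Pi$ is automatically an affine automorphism of $\tilde D$ with the same derivative $J$; the point to check is that such a lift exists. In the coordinates of \S\ref{sect:dodecunfolding} I would exhibit it explicitly: presenting sheet $i$ as the net rotated by $i\pi/5$ and using $JR^i=R^{-i}J$, the matrix $J$ carries sheet $i$ to sheet $-i\bmod 10$ while permuting the twelve pentagons of each sheet by the reflection of the net, and one verifies that this index permutation respects the gluings encoded by \texttt{build\_adj\_dodec}. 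Since \texttt{FlatSurf} can decide equality of translation surfaces, the same conclusion can be reached rigorously by forming $J(\tilde D)$ and comparing it directly with $\tilde D$.

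Granting $J(\tilde D)=\tilde D$, the first assertion is exactly the displayed equivalence from the first paragraph. For the final (``in particular'') statement I would use that the relations make $V(\Pi_5)=\langle R,T\rangle$ a normal subgroup of index two in $V_\pm(\Pi_5)=\langle R,T,J\rangle$, giving the coset decomposition $V_\pm(\Pi_5)=V(\Pi_5)\sqcup J\,V(\Pi_5)$. Hence $V_\pm(\Pi_5)(\tilde D)=V(\Pi_5)(\tilde D)\cup J\,V(\Pi_5)(\tilde D)$, and by the first part together with the surjectivity of $\varphi$ we have $J\,V(\Pi_5)(\tilde D)=\varphi\big(V(\Pi_5)\big)(\tilde D)=V(\Pi_5)(\tilde D)$; so the two orbits coincide.

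I expect the main obstacle to be the rigorous verification that it is precisely $J$, rather than some other reflection $R^{j}J\in V_\pm(\Pi_5)$, that lies in $V_\pm(\tilde D)$: the algebraic reduction and the orbit computation are purely formal, whereas pinning down the derivative of the lifted reflection genuinely uses the symmetry of the dodecahedron and its compatibility with the chosen horizontal orientation. The cleanest fully rigorous route is likely the direct \texttt{FlatSurf} comparison of $J(\tilde D)$ with $\tilde D$.
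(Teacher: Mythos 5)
Your reduction is exactly the paper's: from the relations \eqref{eq:identities} you identify $\varphi$ with conjugation by $J$, so the whole proposition collapses to the single fact $J(\tilde D)=\tilde D$, and your coset argument for the ``in particular'' clause is precisely what the paper leaves implicit. Where you genuinely diverge is in how you establish $J(\tilde D)=\tilde D$, and here you miss the shortcut the paper uses. Theorem~\ref{NormCov} does not just give regularity of $\pi_\Pi$ (the only thing you cite it for); its main conclusion is that $\tilde D$ is itself a \emph{Platonic surface}, i.e.\ $\Isom(\tilde D)$ acts transitively on flags of the lifted pentagon decomposition. That is all one needs: an isometry swapping the two endpoints of a horizontal edge of a pentagon of $\tilde D$ is orientation-reversing with derivative the vertical reflection $-J$, and since $R\in V(\tilde D)$ (the deck transformations of $\pi_D:\tilde D\to D$ are affine automorphisms whose derivatives are exactly the powers of $R$; flag-transitivity gives this too), one concludes $J=R^{5}\cdot(-J)\in V_\pm(\tilde D)$ with no computation at all. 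This also dissolves what you call the ``main obstacle'': it does not matter whether the reflection you produce has derivative $J$ or $R^{j}J$, because multiplying by the available powers of $R$ converts any one of these into $J$. Your alternative route --- lifting the reflection $j$ of $\Pi_5$ through $\pi_\Pi$ and checking that the lift exists, either by verifying the induced index permutation against the gluings of \texttt{build\_adj\_dodec} or by a \texttt{FlatSurf} comparison of $J(\tilde D)$ with $\tilde D$ --- is sound in principle and the computational check would be rigorous by the standards of this paper, but as written it is only a sketch: the monodromy/gluing verification is exactly the nontrivial content, and you have deferred rather than done it. In short, the structure of your argument matches the paper, but you replace a structural fact already proved (Platonicity of $\tilde D$) by an unexecuted computation, which makes your proof both longer and, as it stands, incomplete at its one essential step.
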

\begin{proof}
In light of \eqref{eq:identities} we see that $\varphi$ describes the action of $J$ by conjugation on $V(\Pi_5)$. Theorem \ref{NormCov} tells us that $\tilde D$ is Platonic, and it follows that $J(\tilde D)=\tilde D$. We conclude
$$J A(\tilde D) = J A J^{-1}(\tilde D)=\varphi(A)(\tilde D).$$
\end{proof}

Let $N$ denote the number of elements of $V(\Pi_5)(\tilde D)$. We will use \verb|FlatSurf| to follow the algorithm in \S \ref{sect:computing veech groups} in order to
\begin{enumerate}
\item Compute $N$, the index of $V(\tilde D)$ in $V(P)$.
\item Enumerate $V(\Pi_5)(\tilde D)=\{E_1, E_2, \ldots, E_{N}\}$ with $E_1=\tilde D$.
\item Compute permutations $r, t$ of $\{1,\ldots, N\}$ satisfying 
$$R(E_i) = E_{r(i)} \quad \text{and} \quad  T(E_i) = E_{t(i)}$$
for all $i \in \{1, \ldots, N\}$.
\end{enumerate}

\begin{remark}[Indexing of surfaces]
\label{rem:indexing}
Consider the free monoid of words in the alphabet $\{`R\textrm', `T\textrm'\}$ equipped with the operation of concatenation.
We equip this monoid with a total ordering $\prec$. We declare that $w_1 \prec w_2$ if the word $w_1$ is shorter than $w_2$.
If $w_1$ and $w_2$ have the same length we define $w_1 \prec w_2$ to agree with the lexicographical ordering read from {\em right to left}, i.e.,
$w_1 \prec w_2$ if in the first position from the right at which they differ an $`R\textrm'$ appears in $w_1$ and a $`T\textrm'$ appears in $w_2$ (e.g., we have $`TRR \prec RTR\textrm'$).
Words in $`R\textrm'$ and $`T\textrm'$ act on surfaces through the matrix action of $R$ and $T$. Observe that for any surface, $E \in V(\Pi_5)(\tilde D)$ there is a smallest word $w=w(E)$ in the ordering so that $w(\tilde D) = E$. Our algorithm indexes the surfaces in  $V(\Pi_5)(\tilde D)$ so that
$$w(E_1) \prec w(E_2) \prec \ldots \prec w(E_N).$$
This condition uniquely determines the indexing of surfaces.
\end{remark}

We will also:
\begin{enumerate}
\item[4.] Compute the words $\verb!w[i]!=w(E_i)$ following the remark above.
\end{enumerate}
The following code computes these four quantities. Compare \S \ref{sect:algorithm}.

\begin{verbatim}
gens = [(R,'R'), (T,'T')] 
for gen,letter in gens:
    gen.set_immutable()
gens_action = {gen:{} for gen,letter in gens} 
Dtilde_canonicalized = Dtilde.canonicalize()
leaves = [Dtilde_canonicalized]
surface_to_index = {Dtilde_canonicalized: 1}
w = {1: ''}
while len(leaves)>0: 
    old_leaves = leaves
    leaves = []
    for leaf in old_leaves:
        leaf_index = surface_to_index[leaf]
        for gen, letter in gens:
            image_surface = (gen*leaf).canonicalize()
            if image_surface in surface_to_index:
                image_index = surface_to_index[image_surface]
            else:
                image_index = len(surface_to_index)+1
                surface_to_index[image_surface] = image_index
                leaves.append(image_surface)
                w[image_index] = letter + w[leaf_index]
            gens_action[gen][leaf_index] = image_index
N = len(surface_to_index)
r = Permutation( [ gens_action[R][i] for i in range(1,N+1) ] )
t = Permutation( [ gens_action[T][i] for i in range(1,N+1) ] )
\end{verbatim}

Running this code we see that $N=2106$. The computed permutations $r$ and $t$ of the set $\{1, \ldots, 2106\}$ are given in the auxiliary file
\verb|code_from_the_article.ipynb|\footnote{This answer and the list of cosets was independently checked by code produced by Myriam Finster through personal communication.}.
By Proposition \ref{prop:j}, the permutation $j$ of $\{1, \ldots, 2106\}$
satisfying $J(E_i)=E_{j(i)}$ can be computed using the following code:
\begin{verbatim}
rinv = ~r  # The inverse permutation of r
tinv = ~t  # The inverse permutation of t
def index_of_phi_of_word(word):
    # Given a word in the letters "R" and "T", this function computes
    # the surface index of the surface phi(word)(tildeD).
    index = 1
    for letter in reversed(word):
        if letter == "R":
            index = rinv(index)
        elif letter == "T":
            index = tinv(index)
    return index
j = Permutation([index_of_phi_of_word(w[i]) for i in range(1,2107)])
\end{verbatim}

To look for closed saddle connections in $\S$ \ref{sect:closed}, we will need to understand the $<T,J>$-orbits of the surfaces $\{E_1, \ldots, E_N\}$.
The action of $T$ and $J$ are given by the permutations $t$ and $j$. 
The $\langle t,j\rangle$-orbits partition $\{1,\ldots, N\}$ into equivalence classes. We compute this partition as \verb|equivalence_classes| below. Then we produce a list of words called \verb|wordlist| which contains a single word 
in \verb|"R"| and \verb|"T"| for each equivalence class. The word $w$ associated to an equivalence class is the smallest $w$ (in the sense of Remark \ref{rem:indexing}) so that $w(\tilde D)$ is a surface $E_i$ with index in the equivalence class.

\begin{verbatim}
equiv = {i:{i} for i in xrange(1,2107)} 
# equiv will be a map from indices to their equivalence classes.
for index1 in xrange(1,2107):
    index2 = t(index1)
    for index3 in equiv[index2]:
        equiv[index1].add(index3)
        equiv[index3] = equiv[index1]
    index2 = j(index1)
    for index3 in equiv[index2]:
        equiv[index1].add(index3)
        equiv[index3] = equiv[index1]
equivalence_classes = { frozenset(equiv[i]) for i in xrange(1,2107) }
indexlist = [min(eqclass) for eqclass in equivalence_classes]
indexlist.sort()
wordlist = [w[i] for i in indexlist]
\end{verbatim}

We observe that \verb|wordlist| consists of $211$ words with the command \verb|len(wordlist)|. Recall that we claimed that there were $422$ unfolding-symmetry classes of maximal cylinders on $D$. This follows:

\begin{proof}[Proof of Theorem \ref{thm:422}]
Since the Teichm\"uller curve of the double pentagon has only one cusp, every periodic direction on the double pentagon looks like the horizontal one and in particular there are two maximal cylinders in any such direction. These two cylinders are geometrically very different: for example, the ratio of their circumferences is irrational. Since the covering $\pi_\Pi:\tilde D \to \Pi_5$ is regular, the periodic directions on $\tilde D$ and $\Pi_5$ coincide and there are only two maximal cylinders in such a direction on $\tilde D$ up to the action of the deck group of $\pi_\Pi$. These two cylinders cover those of $\Pi_5$ and are thus not in the same $\Aff_\pm(\tilde D)$ orbit because the circumferences are not commensurable. The calculation above tells us that there are $211$ cusps (or maximal parabolic subgroups of $V(\tilde D)$) up to conjugation in $V_\pm(\tilde D)$. From the above and using Lemma \ref{lem:orbits on cover}, each such cusp gives rise to two $\Aff_\pm(\tilde D)$-orbits of maximal cylinders enumerating those orbits, giving a total of $422$ orbits of maximal cylinders in $\tilde D$. By Proposition \ref{prop:unfolding-symmetry classes}, we have the same number of unfolding-symmetry classes of maximal cylinders on $D$.
\end{proof}

\subsection{Topology of the Teichm\"uller curve}
\label{sect:teich curve dodecahedron}

We explain how knowledge of the permutations $r$ and $t$ yields a description of the Teichm\"uller curve of $\tilde D$ as a cover of the Teichm\"uller curve of $\Pi_5$. We utilize the relationship between hyperbolic geometry and coverings described in \S \ref{sect:hyperbolic}.

The Teichm\"uller curve and its unit tangent bundles of $\tilde D$ are given by
$$\cT(\tilde D) =PV(\tilde D) \bs \PSL(2,\bR)/ \PSO(2) 
\quad \text{and} \quad
U \cT(\tilde D) =  PV(\tilde D) \bs \PSL(2,\bR).$$
We have similar definitions for the double pentagon $\Pi_5$. Since $V(\tilde D) \subset V(\Pi_5)$, we get a covering map
$\pi_U:U \cT(\tilde D) \to U \cT(\Pi_5)$. 
Recall that we computed permutations $r$ and $t$ in \S \ref{sect:dodecahedron monodromy}.

\begin{proposition}
The monodromy group of the covering map $\pi_{U}$ is $\langle r, t\rangle$ where
$r$ and $t$ were the permutations found in \S \ref{sect:dodecahedron monodromy}. In particular,
lifts of $R$ and $T$ to $\pi_1\big(U \cT(\Pi_5)\big)$ which is identified
with the preimage $\upsilon^{-1}(PV(\Pi_5)\big)\subset \widetilde{PSL}(2,\bR)$ act by the permutations $r$ and $t$ respectively.
\end{proposition}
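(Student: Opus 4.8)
The plan is to realize $\pi_U$ as the coset covering attached to the inclusion $PV(\tilde D)\subset PV(\Pi_5)$ and then read its monodromy off from the general machinery of \S\ref{sect:hyperbolic}. First I would record that $V(\tilde D)\subset V(\Pi_5)$ with finite index: this is Corollary \ref{cor:finite index}, applied to the translation covering $\tilde D\to\Pi_5$ onto the primitive lattice surface $\Pi_5$ (which has genus larger than one and singular set equal to the zeros of its $1$-form). Projectivizing gives $PV(\tilde D)\subset PV(\Pi_5)$ and hence the covering $\pi_U:PV(\tilde D)\backslash\PSL(2,\bR)\to PV(\Pi_5)\backslash\PSL(2,\bR)$. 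Its fiber over the basepoint $PV(\Pi_5)$ is the finite set of cosets $\{PV(\tilde D)\,g:~g\in PV(\Pi_5)\}$.

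Next I would set up the dictionary between this fiber and the enumerated orbit $\{E_1,\dots,E_N\}$. Since $-I$ is the derivative of the order-two automorphism $z\mapsto -z$ present on every translation surface, we have $-I\in V(\tilde D)$, so projectivization does not change the index and $N=[PV(\Pi_5):PV(\tilde D)]$. Proposition \ref{prop:injective} then gives a bijection from $V(\Pi_5)/V(\tilde D)$ onto the orbit $V(\Pi_5)(\tilde D)$ sending $C_iV(\tilde D)$ to $E_i=C_i(\tilde D)$, which identifies the fiber of $\pi_U$ with $\{1,\dots,N\}$. Under this dictionary the defining relations $R(E_i)=E_{r(i)}$ and $T(E_i)=E_{t(i)}$ say exactly that multiplication by $R$ and by $T$ permutes the relevant cosets by $r$ and $t$.

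The heart of the argument is then the monodromy computation, which I would deduce from Proposition \ref{prop:monodromy} together with the identification \eqref{eq:iota2} of $\pi_1\big(U\cT(\Pi_5)\big)$ with $\upsilon^{-1}\big(PV(\Pi_5)\big)$. The one genuinely delicate point is that $U\cT$ is written as a \emph{left} quotient $PV\backslash\PSL(2,\bR)$, whereas \S\ref{sect:hyperbolic} and Proposition \ref{prop:monodromy} are phrased for the \emph{right} quotients $X_G=\PSL(2,\bR)/G$; as flagged in the Remark following Proposition \ref{prop:monodromy}, the concatenation order and the side of the quotient must be tracked consistently. I would bridge this with the inverse-map homeomorphism $gK\mapsto Kg^{-1}$, which intertwines the covering $X_{\tilde D}\to X_{\Pi_5}$ with $\pi_U$ and carries the surface labeling $C_iV(\tilde D)\leftrightarrow E_i$ to the geometric labeling $PV(\tilde D)\,g\mapsto g^{-1}(\tilde D)$ of $U\cT(\tilde D)$; equivalently, one applies the left-quotient analogue of \eqref{eq:concatenation2} directly. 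With the conventions transported through this map, the loops lifting $R$ and $T$ act on the fiber by the coset permutations induced by $R$ and $T$, namely $r$ and $t$. Since $\pi_1\big(U\cT(\Pi_5)\big)$ is generated by these lifts (the central kernel of $\upsilon$ acting trivially), the image of the monodromy representation is $\langle r,t\rangle$.

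The main obstacle I anticipate is precisely this bookkeeping of left-versus-right cosets and of path-concatenation order: arranging the directions so that the generators act by $r$ and $t$ rather than by their inverses. I would therefore invoke the left-quotient version of Proposition \ref{prop:monodromy} (or, equivalently, transport everything through the inverse map) instead of citing the right-quotient statement verbatim. Reassuringly, because $\langle r,t\rangle=\langle r^{-1},t^{-1}\rangle$, the identification of the monodromy \emph{group} — which is all the subsequent computation of the topology of $\cT(\tilde D)$ requires — is insensitive to this choice.
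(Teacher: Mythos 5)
Your proposal follows essentially the same route as the paper's proof: identify the fiber of $\pi_U$ over the basepoint with the orbit $PV(\Pi_5)(\tilde D)$ (via Proposition \ref{prop:injective} and the fact that $-I \in V(\tilde D)$ makes the projectivized action agree with the $V(\Pi_5)$-action on surfaces), observe that the generators $R$ and $T$ act on this orbit precisely by the permutations $r$ and $t$ computed in \S\ref{sect:dodecahedron monodromy}, and conclude by Proposition \ref{prop:monodromy}. You are in fact more careful than the paper on one point: the paper defines $U\cT(\tilde D)$ as the left quotient $PV(\tilde D)\bs\PSL(2,\bR)$ and then cites Proposition \ref{prop:monodromy}, which is phrased for right quotients, without comment; your bridging via the inverse map $gK \mapsto Kg^{-1}$, together with the honest observation that the generators may come out acting by $r^{-1},t^{-1}$ under one convention but that $\langle r,t\rangle = \langle r^{-1},t^{-1}\rangle$ (and cycle types, which is all Theorem \ref{TeichDodec} uses, are unaffected), is exactly the right way to discharge this.

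There is, however, one incorrect justification that you should repair: the claim that $-I \in V(\tilde D)$ because ``$z \mapsto -z$ is an order-two automorphism present on every translation surface'' is false as a general principle. A generic translation surface outside the hyperelliptic loci admits no affine automorphism with derivative $-I$ (indeed, typically its Veech group is trivial), so this cannot be taken for granted. For $\tilde D$ the fact is nevertheless true for reasons specific to the unfolding: the deck group of $\pi_D:\tilde D \to D$ is cyclic of order $k=10$, acting by affine automorphisms whose derivatives are the rotations by multiples of $\pi/5$, and hence contains an automorphism with derivative rotation by $\pi$, i.e.\ $-I$; equivalently, $R \in V(\tilde D)$ (as the computation shows $r$ fixes the index of $\tilde D$) and $R^5 = -I$ by \eqref{eq:identities}. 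With this substitution your argument is complete and matches the paper's.
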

\begin{proof}
Since $PV(\Pi_5)= \langle R, T \rangle/\pm I$, we see that the monodromy group is generated
by the action of $R$ and $T$ on the fiber $\pi_U^{-1}(\Pi_5)$. Consequently,  
$\pi_U^{-1}(\Pi_5) = PV(\Pi_5)(\tilde D)$. Note that because $-I \in V(\tilde D)$, $PV(\Pi_5)=V(\Pi_5)/\pm I$ has a well defined action on
the orbit $V(\Pi_5)(\tilde D)$ and that action agrees with the $V(\Pi_5)$-action. We obtained the permutations $r$ and $t$
using the action of $R$ and $T$ on the orbit $V(\Pi_5)(\tilde D)$.
The conclusion follows from Proposition \ref{prop:monodromy}.
\end{proof}

Now we can prove Theorem \ref{TeichDodec} describing the topology of $\cT(\tilde D)$. 
\begin{proof}[Proof of Theorem \ref{TeichDodec}]
Since $U \cT(\tilde D) \to U \cT(\Pi_5)$ is a cover of degree $N=2106$, we know that
$$\Area~\cT(\tilde D) = 2106 \cdot \Area~\cT(\Pi_5) = 2106 \cdot \frac{3 \pi}{5} = \frac{6318 \pi}{5}.$$

The Teichm\"uller curve $\cT(\Pi_5)$ has exactly one cusp. A loop around this cusp in $\pi_1\big(\cT(\Pi_5)\big)$
is represented by a lift of $T$ to $\widetilde {PV}(\Pi_5)$, and the monodromy action of such an element
was given by $t$. The  number of  cusps of $\cT(\tilde D)$ is then the number of $t$-orbits. We can compute this to be $362$ using the 
sage command \verb!len(t.cycle_type())!.

The Teichm\"uller curve $\cT(\Pi_5)$ has one cone singularity of cone angle $\frac{2 \pi}{5}$. As before
the corresponding monodromy action is given by the $r$-action. Using the command \verb!r.cycle_type()! we observe that $r$ has a single $1$-cycle (corresponding to $\tilde D)$ and $421$ five-cycles. This means that all points in the fiber of $P$ other than $\tilde D$ are flat (regular) points on $\cT(\tilde D)$, i.e., $\cT(\tilde D)$ has exactly one singularity of cone angle $\frac{2 \pi}{5}$.

The Teichm\"uller curve $\cT(\Pi_5)$ has one cone singularity of cone angle $\pi$.
Recall that $(R T^{-1})^2=-I$. This means that a lift of $RT^{-1}$ corresponds to the loop in $U \cT(\tilde D)$ around the cone singularity with cone angle $\pi$. The monodromy action is then given by 
$r \circ t^{-1}$ and the cycle type can be computed with
\begin{verbatim}
r.left_action_product(~t).cycle_type()
\end{verbatim}
This reveals that there are $1044$ two-cycles and $18$ one-cycles. Consequently there are $18$ cone singularities with cone angle $\pi$ on $\cT(\tilde D)$. 

By the Orbifold Gauss-Bonnet formula \cite[Proposition 7.9]{FarbMargalit11}, we have

$$6318\textstyle\frac{\pi}{5} = \Area~\cT(\tilde D) = -2 \pi\left((2-2g)-18\left(1- \textstyle \frac{1}{2}\right)-1\left(1-\textstyle \frac{1}{5}\right) - 362\right).$$
where $g$ is the genus of $\cT(\tilde D)$. Reducing this, we obtain $20g = 2620$ and so $g=131$.
\end{proof}

\section{Closed saddle connections on the dodecahedron}
\label{sect:closed}

In this section we take the reduced list of cosets \verb|wordlist| from Section \ref{sect:dodecahedron monodromy} and perform the search for saddle connections from a zero to itself. We prove Theorem \ref{PlatVertDodecRough} which says there are $31$ unfolding-symmetry classes of closed saddle connections on the dodecahedron.

\subsection{The action of the Veech group on the monodromy permutations}

Consider the four elements of the fundamental group $\pi_1(\Pi_5,p_0)$ denoted $x_i$, which is a closed loop based at $p_0$ and passing through edge $i$ of each pentagon. See Figure \ref{DoublePentConvs}
which is based on \cite[Figure 3.2]{Finster}. Then each of these four elements give rise to the monodromy permutations derived in Section \ref{sect:ConstructPerms}.  They are given by
\begin{verbatim}
x = [python_to_sage(perm_odd(5, i, double_pent_top(), double_pent())) for i in range(4)]
\end{verbatim}

\begin{figure}
\centering
\includegraphics[height=2in]{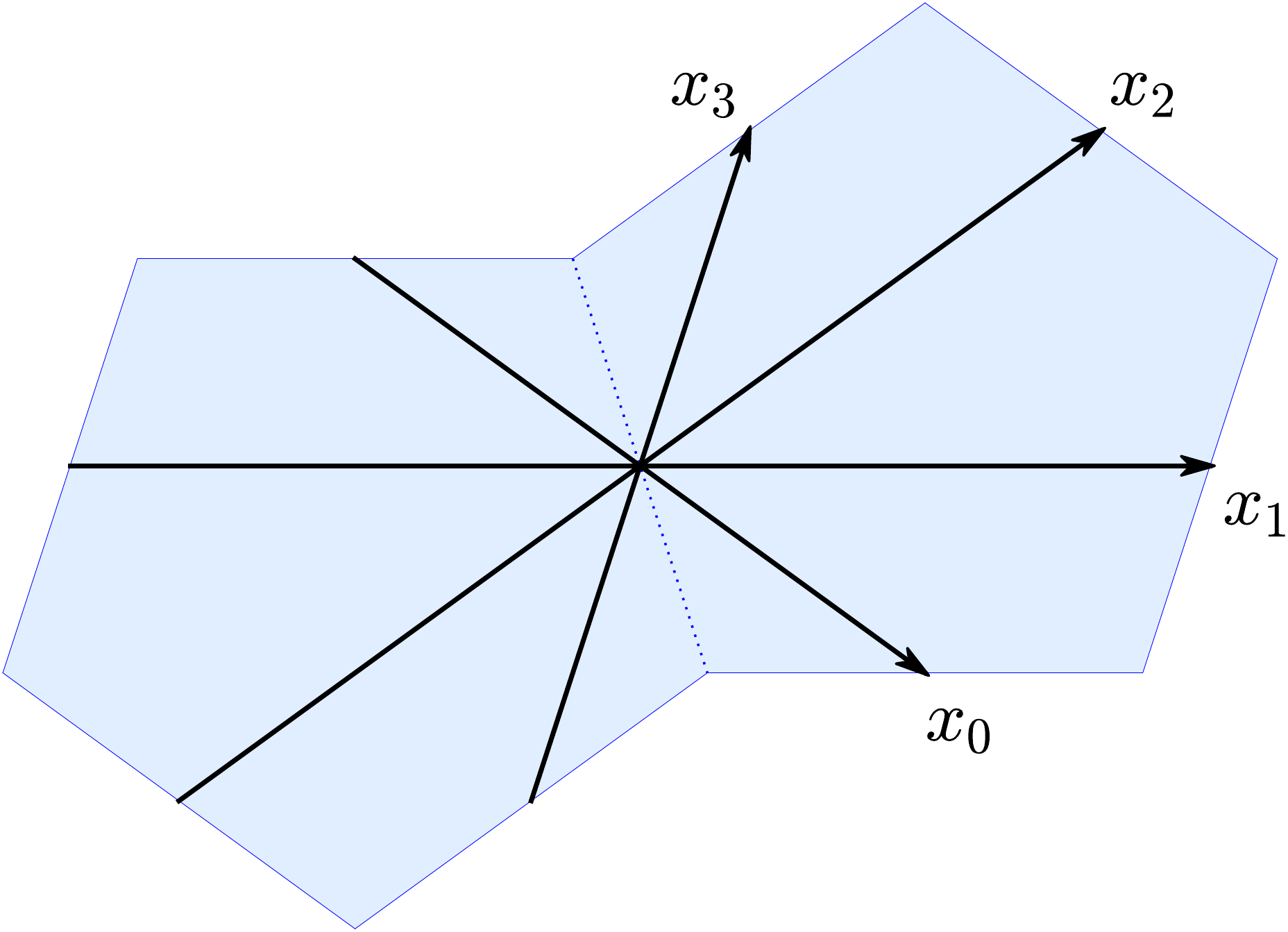}
\caption{The double pentagon $\Pi_5$ with generating loops $x_0$, $x_1$, $x_2$, and $x_3$ for
$\pi_1(\Pi_5,p_0)$, where $p_0$ denotes the common intersection point.}
\label{DoublePentConvsFundGrp}
\end{figure}

Then the action by the generators $R$ and $T$, which generate the Veech group of the double pentagon, induces an action on \verb|x|.  Formulas for the action by each element were derived in \cite{Finster} for the case of arbitrary $p$-gons and double $p$-gons.  However, the reader can easily check the double pentagon case by hand.  The actions on the permutation elements are given by functions \verb|gam_r(x)| and \verb|gam_t(x)| for $R$ and $T$, respectively.

\begin{verbatim}
def gam_r(x):
    return (x[2]*x[3]**(-1), x[2], x[2]*x[0]**(-1), x[2]*x[1]**(-1))
\end{verbatim}

\begin{verbatim}
def gam_t(x):
    return (x[0]*x[1]**(-1), x[1], x[1]*x[2]*(x[3]**(-1))*x[2], x[1]*x[2])
\end{verbatim}

The function \verb|sn_n(perm, elt, power=1)| is the canonical action of $\verb|perm| \in S_n$ on $\verb|elt| \in \{1, 2, \ldots, n\} = S$ by multiplying by \verb|perm| to the power $\verb|power| = \pm 1$.  The function outputs the resulting integer in the set $S$.

\subsection{Saddle connection search}
\label{sect:saddle connection search}
In the Teichm\"uller curve of $\tilde D$, let $\tilde D'$ be any surface corresponding to a point in the fiber over the double pentagon $\Pi_5$.  Since $\tilde D'$ covers $\Pi_5$, it is completely determined by the quadruple of monodromy permutations as $\tilde D$ is.  Let \verb|x| be the quadruple of permutations defining $\tilde D'$.  The function \verb|vert_cycle(n,x)| takes the point $v_0$ incident with sheet \verb|n|, in the sense depicted in Figure \ref{DoublePentConvs}, and on the surface $\tilde D'$ given in the form of the tuple \verb|x| and travels counter-clockwise recording every double pentagon sheet it sees as it does this until it closes.  Since the canonical $10$-cover of the dodecahedron is totally ramified and it lies in the stratum $\cH(8^{20})$, this loop will see a total cone angle of $(8)(2\pi) + 2\pi = 18\pi$.  The corner angle of a pentagon is $3\pi/5$, which implies that this loop passes through $30$ pentagons before closing because $30\left(\frac{3\pi}{5}\right) = 18\pi$.  However, the output of \verb|vert_cycle(n,x)| is a $24$-tuple and \emph{not} a $30$-tuple.  This is clear by observing that $v_0$ and $v_4$ in Figure \ref{DoublePentConvs} each correspond to two pentagons, yet only represent a single sheet of the cover.  Therefore, it outputs a list of $24$ sheets that $v_0$ is incident with.

\begin{verbatim}
def vert_cycle(n,x):
    sheet_cycle = [n];
    end = -1;
    n8 = n
    while end != n:
        n1 = sn_n(x[3],n8,-1)
        sheet_cycle.append(n1)
        n2 = sn_n(x[2],n1)
        sheet_cycle.append(n2)
        n3 = sn_n(x[1],n2,-1)
        sheet_cycle.append(n3)
        n4 = sn_n(x[0],n3)
        sheet_cycle.append(n4)
        n5 = sn_n(x[3],n4)
        sheet_cycle.append(n5)
        n6 = sn_n(x[2],n5,-1)
        sheet_cycle.append(n6)
        n7 = sn_n(x[1],n6)
        sheet_cycle.append(n7)
        n8 = sn_n(x[0],n7,-1)
        if n8 != n:
            sheet_cycle.append(n8)
        end = n8
    return sheet_cycle
\end{verbatim}

As proven in Proposition \ref{SaddConnReductionProp}, it suffices to search in each of the translation surfaces corresponding to coset representative among lifts of the two saddle connections $\sigma_0$ and $\sigma_1$ to see if there is a saddle connection from a vertex to itself.

The long saddle connection $\sigma_0$ connects $v_0$ to $v_2$ in Figure \ref{DoublePentConvs}.  Therefore, the tuple outputted by \verb|vert_cycle(n,x)| will agree between the $0$ and $2~(\textrm{mod}~8)$ place in the tuple exactly when $v_0$ and $v_2$ represent the same zero.  This is exactly what the function \verb|vert_to_self_long(vert)| determines, where \verb|vert| is a $24$-tuple outputted by \verb|vert_cycle(n,x)|.  The function \verb|vert_to_self_short(vert)| does the corresponding search among the short saddle connections.

\begin{verbatim}
def vert_to_self_long(vert):
    vert0_to_vert2 = [(0,i%8,vert[i]) for i in range(2,24,8) if vert[0] == vert[i]]
    return vert0_to_vert2

def vert_to_self_short(vert):
    vert0_to_vert3 = [(0,i%8,vert[i]) for i in range(3,24,8) if vert[0] == vert[i]]
    return vert0_to_vert3
\end{verbatim}

The function \verb|sage_mon_gen_fcn(coset_rep)| takes a coset representative \verb|coset_rep| in the form of a string, and outputs a Python function that is an appropriate concatenation of the functions \verb|gam_r(x)| and \verb|gam_t(x)| above.  Finally, the functions 
\begin{center}
\verb|traj_to_self_search_short(sheet = 1)| and \verb|traj_to_self_search_long(sheet = 1)|
\end{center}
\noindent combine all of the functions above to search through every point in the fiber over the double pentagon for short and long saddle connections on sheet \verb|sheet| of the cover, respectively.

\begin{verbatim}
def traj_to_self_search_short(sheet = 1):
    coset_traj_to_self = []
    for coset in coset_reps:
        y = sage_to_python(str(eval(sage_mon_gen_fcn(coset))))
        coset_search = vert_to_self_short(vert_cycle(sheet,y))
        if len(coset_search) > 0:
            coset_traj_to_self.append(
                (coset_reps.index(coset),coset,coset_search))
    return coset_traj_to_self_short

def traj_to_self_search_long(sheet = 1):
    coset_traj_to_self = []
    for coset in coset_reps:
        y = sage_to_python(str(eval(sage_mon_gen_fcn(coset))))
        coset_search = vert_to_self_long(vert_cycle(sheet,y))
        if len(coset_search) > 0:
            coset_traj_to_self.append(
                (coset_reps.index(coset),coset,coset_search))
    return coset_traj_to_self_long
\end{verbatim}

Since $\tilde D$ is a regular cover of $\Pi_5$ by Theorem \ref{NormCov}, it suffices to run the search with the default value of \verb|sheet| since all answers will be equivalent up to an element of the deck group of $\tilde D$.

\subsubsection{Output of the search}
\label{sect:DodecSearchOutput}

Prior to executing the search above, the reader will observe that the midpoint of the short saddle connection on $\Pi_5$ contains a Weierstrass point and by Theorem \ref{thm:virtual-Weierstrass} no closed saddle connection on the unfolded dodecahedron can be renormalized to it.  Indeed, the function \verb|traj_to_self_search_short()| returns the empty list.


On the other hand, \verb|traj_to_self_search_long()| returns a list with $31$ elements.  The list of all coset representatives with other data is stored in the variable \verb|traj_to_self_coset_list|.  This completes the proof of Theorem \ref{PlatVertDodecRough}.

\subsection{Generating the Trajectories}
\label{sect:generating}

The coset representatives of in \verb|traj_to_self_coset_list| are words in $\{R,T\}$.
Fix such a word $w$. It represents a coset $[w]$ of $\langle T, J \rangle \bs V_\pm(\Pi_5) / V_\pm(\tilde D)$ by viewing $w \in \langle R, T \rangle \subset V_\pm(\Pi_5)$. The word $w$ was returned by \verb|traj_to_self_search_long()| because a lift of $\sigma_0$ (depicted in Figure \ref{DoublePentConvs}) to $w (\tilde D)$ is closed (i.e., the endpoints are the same singularity). Let $\phi:\tilde D \to w(\tilde D)$ be an affine homeomorphism with derivative $w$, we see that $\phi^{-1}(\sigma)$ is a closed saddle connection on $\tilde D$. In particular, $\tilde D$ has a closed saddle connection with holonomy $w^{-1} \hol(\sigma_0)$. By the regularity of the covering $\tilde D \to \Pi_5$ and the fact that $-I \in V(\tilde D)$, we have that in fact every
saddle connection with holonomy $w^{-1} \hol(\sigma_0)$ is closed.
Furthermore, since $R \in V(\tilde D)$, there are closed saddle connections with holonomy $R^k w^{-1} \hol(\sigma_0)$ for every $k \in \{0, \ldots, 9\}$ and every saddle connection on $\tilde D$ with such a holonomy is closed. Note also that all of these saddle connections are unfolding-symmetric.

A list of holonomies $R^k w^{-1} \hol(\sigma_0)$ of closed saddle connections obtained by words in \verb|traj_to_self_coset_list| is provided in the table in Appendix \ref{appendix:list}, with one choice of $w$ and $k$ for each unfolding-symmetry class.
In the auxiliary file \verb|Figures.ipynb|, we draw a closed saddle connection corresponding to each $w$ used in the table. 

\appendix
\section{The work of St\"ackel and Rodenberg\\ \smallskip by Anja Randecker}
\label{appendix:History}

We describe two early 20th century papers which introduce the problem of studying geodesics on the surfaces of polyhedra, written by two colleagues from Hannover, Paul St\"ackel~\cite{Stackel} and Carl Rodenberg~\cite{Rodenberg}. We also give very brief biographical sketches of St\"ackel and Rodenberg.

These papers are cited in the 1936 paper of Fox-Kershner~\cite{FK36}, and  have the same name: \textit{``Geod\"atische auf Polyederfl\"achen''} -- geodesics on surfaces of polyhedra. Rodenberg~\cite{Rodenberg} builds on the work of St\"ackel~\cite{Stackel},  and it seems likely that they had discussed their results in advance. While Stäckel prepares the topic by introducing concepts and giving definitions, Rodenberg applies these concepts to specific examples.

\subsection{St\"ackel and Rodenberg}
Carl Rodenberg (1851--1933) studied mathematics in Karlsruhe and Göttingen, graduated in Göttingen in 1874, and was first a high school teacher in Plauen and a professor of mathematics in Darmstadt and then a professor for descriptive geometry in Hannover until retiring. He is today perhaps best known for designing and building plaster models for a series of more than 20 cubic surfaces~\cite{GotMod}.

Paul St\"ackel (1862--1919) studied mathematics and physics in Berlin and graduated in 1885 under the direction of Kronecker. He wrote his habilitation thesis in Halle and was a professor in Königsberg, Kiel, Hannover, Karlsruhe, and Heidelberg. He is best known for his contributions in integrable systems, differential geometry and complex analysis and for his work on the history of mathematics, in particular on non-Euclidean geometry~\cite{vonRenteln}.

\subsection{Extending geodesics and unwinding}
Motivated by singularities of differential equations defining geodesics, St\"ackel studies straight lines on a polyhedron and explains that a straight line can uniquely be continued over an edge. St\"ackel describes the concept of \emph{unwinding} by fixing a face $F$ of the polyhedron and considering a geodesic segment on $F$ that hits an edge $E$. The other face $F'$ adjacent to $E$ is rotated about $E$ until it lies in the same plane as $F$. Then the geodesic segment can be continued in this plane and hence it can be continued on the polyhedron over the edge.

Rodenberg defines a geodesic as a ``Linienzug, welcher nach einer ebenen Aneinanderreihung der von ihm durchlaufenen Fl\"achen zu einer Geraden wird'' (p.~108), that is, a sequence of line segments which becomes a straight line when stringing together the faces that are traversed by the segments. In modern terminology, we would say that the concept of unwinding defines a developing map. This idea anticipates the concept of unfolding developed by Fox-Kershner~\cite{FK36}, where additionally translation-equivalent faces are identified. 

While St\"ackel and Rodenberg agree on the general definition of a geodesic, they strongly disagree on how to deal with geodesic segments that hit a singularity. St\"ackel says that in some cases (such as the diagonals of a cube) it is impossible to define a continuation of a geodesic segment, Rodenberg is of the opinion that every geodesic segment that hits a corner can be continued in two different ways: there are families of parallel geodesics to the geodesic hitting the corner on either side of it which can be continued over the edge they hit. Taking limits of the continuations of segments `above' and `below' the singular segment one can potentially obtain two possible continuations. 

\subsection{Dihedra and Billiards} St\"ackel and Rodenberg also differ in regard to which polyhedra are considered. Rodenberg uses a broader definition that in particular includes dihedra. He defines a dihedron as a polyhedron where ``der von zwei benachbarten Fl\"achen gebildete Winkel Null ist'' (p.~108), that is, adjacent faces form an angle of $0$. One obtains a general dihedron from gluing all edges of two copies of a polygonal face.
Rodenberg states that a geodesic on a dihedron is ``die Bahn eines von den elastisch gedachten Kanten reflektierten materiellen Punktes oder Strahls'' (p.~108), that is, the orbit of a point or ray that is reflected by the edges which are thought of as being elastic. This is essentially the modern method of describing orbits of polygonal billiards. In this context, Rodenberg conjectures that a generic geodesic is dense on a polyhedron. This would imply in particular that a generic path on a polygonal billiard table is dense, a problem still open for polygonal billiards with irrational angles. 

\subsection{Dichotomies, tilings, and closed geodesics} Both St\"ackel and Rodenberg are interested in the question of understanding different behaviors of geodesics, in particular both closed and non-closed geodesics. They do not consider the problem of singular closed geodesics that is discussed in the current paper.

St\"ackel considers the cube in depth, by considering a tiling of the Euclidean plane by squares. Every unwinding along a geodesic is a ``Kette'' (chain) of squares in this tiling. 
From this St\"ackel deduces that geodesics behave differently depending on whether their slope is rational or irrational. He states that for a slope $\frac{p}{q}$ with $p,q\in \mathbb{Z}$, there is a ``Schar von Achsen'' (family of axes; in today's language, we would say a cylinder) of width~$\frac{1}{\sqrt{p^2+q^2}}$. In particular, these cylinders contain closed geodesics. For an irrational slope, the geodesic ``kommt jedem gegebenen Werte beliebig nahe'' (p.~145), that is, it approximates every given point. This is a version of Kronecker's dichotomy for rotations in a circle: irrational rotations have dense orbits, and rational rotations periodic orbits. 

St\"ackel sketches how a tiling of the Euclidean plane by equilateral triangles can be used to find \emph{closed} geodesics on the tetrahedron, the octahedron, and the icosahedron.
For the dodecahedron, this method cannot be immediately applied as there is no tiling of the plane by regular pentagons. St\"ackel writes ``es scheint, als ob hier nur eine endliche Anzahl von Scharen geschlossener geod\"atischer Linien vorhanden sei'' (p.~150), that is, it seems as if there exists only a finite number of families of closed geodesics. That this conjecture of Stäckel is false was shown by Masur~\cite{Masur} for general abelian differentials, and directly by Theorem \ref{thm:422} (since each of the 422 equivalence classes in that result is infinite).

St\"ackel's discussion on closed geodesics on polyhedra other than the cube starts with an acknowledgement that Rodenberg pointed out to Stäckel that closed geodesics occur on regular polyhedra. Rodenberg~\cite{Rodenberg2} makes clear that he additionally pointed out that by unwinding the dodecahedron, one has a dense set of images of the corners instead of a tiling of the plane by regular pentagons. From this, he deduces the opposite statement: that there should be infinitely many closed geodesics on the dodecahedron. This conjecture is in line with Theorem \ref{thm:422}.

He picks up on this point in~\cite{Rodenberg} and studies how to construct closed geodesics on the dodecahedron. The idea of a tiling of the plane is replaced by a coordinate system defined by a side and a diagonal of a regular pentagon. Rodenberg claims that every ray that goes through an integer point of this coordinate system is contained in a chain of pentagons that comes from unwinding and hence this ray defines a closed geodesic. He mentions that he found this statement through a ``lange Versuchsreihe'' (p.~116), that is, a long series of tests. A rigorous proof seems not to be given here (see also Steinitz's Zentralblatt review~\cite{Steinitz} of~\cite{Rodenberg}).


Rodenberg then studies a procedure to find the integer points that lie on a given ray and a criterion for such a ray to be parallel to a family of geodesics. At this point, he emphasizes that the closed geodesics which are parallel to an edge are the only ones known so far. He then introduces involved notation to describe straight lines from a corner to another corner. Using this notation, he gives a few explicit directions and four families of directions of closed geodesics. His idea is to use a known cylinder of closed geodesics and consider the infinitely many straight lines from one boundary of a cylinder to the other. That the other families have all disjoint directions and are indeed defining closed geodesics, is not elaborated by Rodenberg. 

Finally, Rodenberg claims (supported by computations) the existence of closed billiard trajectories in the regular $n$-gon for any $n$. His computations are presented in some detail for $n=7$ and $n=4p$.

\section{The inner workings of \texttt{FlatSurf}}
\label{appendix:flatsurf}

Here we give a brief explanation of how \verb|FlatSurf| works with translation surfaces. For more details
see the \verb|FlatSurf| documentation \cite{flatsurf}.

\subsection{Basic geometry}

\subsubsection*{Number fields}
SageMath can do exact arithmetic and comparisons between algebraic numbers in a fixed real algebraic field $\bQ(\alpha)$ where $\alpha$ is an algebraic number. For purposes of this exposition we fix an $F=\bQ(\alpha) \subset \bR$.

\subsubsection*{Polygons}
Throughout this appendix all our polygons will convex, i.e., all interior angles are smaller than $\pi$. The \verb|ConvexPolygon| class represents a convex $n$-gon $P$ with vertices in $F^2$ in terms of their vertices $v_0^P, v_1^P, \ldots, v_{n-1}^P$. We write $n^P$ for the number of vertices of $P$. The edge $e_i^P$ of $P$ is the edge joining $v_i^P$ to $v_{i+1}^P$ with addition taken modulo $n$. The vertices are always oriented counter-clockwise around $P$, i.e., as we move along $e_i^P$ from $v_i^P$ to $v_{i+1}^P$, the polygon $P$ is on the left.

\subsubsection*{Surfaces}
\verb|FlatSurf| has a \verb|ConeSurface| class and a \verb|TranslationSurface| class among other geometric surfaces supported.
A surface $S$ is represented as a collection $\{P(i)\}$ of polygons with indices $i$ in some labeling set $\Lambda_S$,
a {\em base label} $i_0^S \in \Lambda_S$ and gluing data. The {\em gluing data} is an involution $\epsilon$ of the set of edges
$$\cE_S = \{(i,j):~\text{$i \in \Lambda_S$ and $0\leq j \leq n^{P(i)}-1$}\}.$$
Write $e(i,j)=e^{P(i)}_j$ for the edge represented by $(i,j)$ which is an edge of $P(i)$ in the notation above.

If the surface is a cone surface and $(i',j')=\epsilon(i,j)$, then $e(i,j)$ and $e(i',j')$ will differ by a rotation
which we use to glue the polygons $P(i)$ and $P(i')$ along these edges. In the translation surface case,
the edges will differ by translation.

\subsubsection*{Affine action}
SageMath has support for matrices with entries in $F$. If $M$ is a $2 \times 2$ matrix with $det(M)>0$ and entries in $F$ and $P$ is a polygon, then $M*P$ will return the polygon $Q$ whose vertices are $v^Q_i=M v^P_i$.

If $S$ is a translation surface defined using polygons $\{P_i:~i \in \Lambda\}$ and gluing data $\epsilon:\cE_S \to \cE_S$
and $M$ is as above, then \verb|FlatSurf| defines $M*S$ to be the surface whose polygons are $\{Q_i=M*P_i:~i \in \Lambda\}$
and uses the same gluing data $\epsilon$. 

\subsection{Canonicalization of translation surfaces}

\subsubsection*{Polygons}
Call a (convex) polygon $P$ {\em standard} if $v_0=(0,0)$ and the other vertices all lie in 
$$\{(x,y)~:~y>0\} \cup \{(x,y)~:~\text{$y=0$ and $x>0$}\}.$$
Observe every polygon differs from a unique standard polygon by translation and cyclic permutation of vertex indices.

The {\em coordinate string} of a polygon $P$ is the list of real numbers:
$$(x_0,y_0, x_1, y_1, \ldots, x_{n-1}, y_{n-1}) \quad \text{where} \quad
\text{$(x_i,y_i)=v_i^P$ for all $i$}.$$

We utilize a total ordering $\prec$ on the collection of all polygons. We say $P \prec Q$ if $P$ has fewer sides than $Q$.
If $P$ and $Q$ have the same number of sides, then we say $P \prec Q$ if the coordinate string of $P$
appears before the coordinate string of $Q$ in the lexicographical ordering.

\subsubsection*{Canonical indexing}

Let $S$ be a connected surface with polygons $\{P(i)~:~ i \in \Lambda\}$, base label $i_0$, and gluing data $\epsilon$. 
The {\em breadth first indexing} extends $i_0$ to an enumeration of $\Lambda$ of the form $\Lambda=\{i_0, i_1, \ldots, i_{n-1}\}$,
which can be produced by the algorithm below. Recall that a {\em queue} is a finite sequence of objects where
elements can be appended and {\em popped} (removed from front).

\noindent
{\bf Algorithm 2:}
\begin{itemize}
\item Set $m=0$.
\item Let $I=[i_0]$ be a queue of indices to check.
\item Let $V=\{i_0\}$ be a set of visited indices.
\item While $I$ is non-empty:
\begin{itemize}
\item Pop an index $i_\ast$ from $I$.
\item For each $j$ with $0 \leq j \leq n^{P(i_\ast)}-1$:
\begin{itemize}
\item Let $(i',j')=\epsilon(i_\ast,j)$. 
\item If $i'$ not in $V$:
\begin{itemize}
\item Increment $m$.
\item Set $i_m=i'$.
\item Add $i'$ to $V$ and append $i'$ to $I$. 
\end{itemize}
\end{itemize}
\end{itemize}
\end{itemize}

We call a surface $S$ {\em canonically indexed} if the index set is $\Lambda=\{0, \ldots, n-1\}$ for some $n \geq 1$,
if the base label is $0$, and if the breadth first indexing satisfies $i_m=m$ for all $m$.

\subsubsection*{Delaunay decomposition}
Let $S$ be a cone surface with singular set $\Sigma$ and let $S^\ast=S \smallsetminus \Sigma$. 
The boundary of a largest immersed ball in $S^\ast$ touches the singular set in at least two points, and there will be immersed balls touching at three or more points. Within such a largest immersed ball, one can take the convex hull of the points in $\Sigma$ touched. This gives rise to the {\em Delaunay decomposition} of $S^\ast$, which is canonical decomposition into polygons.

If $S$ is given as a collection of polygons with edge identifications so that the identified vertex set is $\Sigma$,
then the Delaunay decomposition can be computed by a triangle flipping algorithm \cite{ILTC, BS07}.

\subsubsection*{An ordering on translation surfaces with base polygon}
We will describe  total ordering $\lhd$ on translation surfaces which are given in terms of their Delaunay decomposition,
are canonically indexed, and so that all polygons are standard. Let $S_1$ and $S_2$ be such surfaces. Then we declare $S_1 \lhd S_2$
if $S_1$ is defined with fewer polygons. Now assume they have the same number of polygons, $n$. Since the surfaces are canonically
indexed, the index set is $\Lambda=\{0, \ldots, n-1\}$. Let $P^1_i$ and $P^2_i$ for $i \in \Lambda$ denote the polygons of the respective surfaces. We declare $S_1 \lhd S_2$
if the smallest $i \in \Lambda$ for which $P^1_i \neq P^2_i$ we have $P^1_i \prec P^2_i$, where $\prec$ is our ordering on polygons.
If this did not differentiate the surfaces, we have $P^1_i=P^2_i$ for all $i$, but the gluing data might differ.
Let $\epsilon_1$ and $\epsilon_2$ be the gluing data for the respective surfaces. If the gluing data differs,
then there is a minimal $i \in \Lambda$ for which the gluing of the $i$-th polygons differ and a minimal edge index $j$ for
which the gluing differs. We declare $S_1 \prec S_2$ if $\epsilon_1(i,j)$ precedes $\epsilon_2(i,j)$ lexicographically.

\subsubsection*{A canonicalized translation surface}

Let $S$ be a translation surface with its Delaunay decomposition and so that all polygons are standard.
Let $n$ be the number of Delaunay polygons.
For each polygon $P$ in the Delaunay decomposition, let $S_P$ denote $S$ with polygons canonically indexed so that the base label $0$ refers to $P$. The {\em canonicalization} of $S$ is the smallest $S_P$ in the ordering $\lhd$ where $P$ varies over all polygons in the Delaunay decomposition. 

Observe that two translation surfaces $S$ with singular set consisting of vertices of polygons are equal if and only if their canonicalizations are identical. Furthermore, the canonicalization of a translation surface presented by gluing polygons together
can be computed by an algorithm.

\begin{remark}
This algorithm also gives access to the translation automorphism group of $S$: The translation automorphism group acts simply transitively on the set of polygons $P$ so that $S_P$ is minimal in the $\lhd$ order.
\end{remark}


\section{List of closed saddle connections on the dodecahedron}
\label{appendix:list}

We will now explain how to produce a saddle connection from a word $w$ representing a coset of closed saddle connections.
In Table~\ref{table:list}, we include one row for each equivalence class of closed saddle connections on $\tilde D$. Each row then also represents one of the unfolding-symmetry classes of closed saddle connections on the dodecahedron. Figure \ref{fig:sc_0} shows the saddle connections described in the first three rows of the table.

Following the notation of \S \ref{sect:generating}, each row contains information corresponding to a closed saddle connection on $\tilde D$ with holonomy ${\mathbf v} = R^k w^{-1} (2 \varphi,0)$, where $w \in \langle R, T\rangle$ is one of the coset representatives returned by the code in \S \ref{sect:saddle connection search}, $k \in \{0, \ldots, 9\}$ and $\varphi=\frac{1+\sqrt{5}}{2}$. (We are working with pentagons with sidelength $2$ so $2 \varphi$ is the length of a diagonal.) The word $w$ in the table is the minimal representative of the coset with respect to the ordering on the monoid described in Remark \ref{rem:indexing}. The integer $k$ was selected so that if $L>0$ and $\theta \in {\mathbb R}/2\pi {\mathbb Z}$ are defined such that ${\mathbf v}=L \cdot (\cos \theta, \sin \theta)$, then we have:
\begin{enumerate}
    \item The angle $\theta$ lies in $[0, \frac{3 \pi}{5})$.
    \item The saddle connection on $\Pi_5$ leaving the left endpoint of a horizontal edge of a pentagon and making an angle of $\theta$ with this edge has holonomy ${\mathbf v}$ and bounds a closed cylinder with circumference $L$ on the left. (This saddle connection leaves vertex $v_0$ of Figure \ref{DoublePentConvs} and travels in direction $\theta$ into the top pentagon.)
\end{enumerate}
It then follows that any separatrix $\sigma$ on $D$ or $\tilde D$ which leaves a singularity while making an angle of $\theta$ with an edge of a pentagon is in fact a closed saddle connection of length $L$.

\begin{remark}
There are three saddle connections in $\Pi_5$ leaving the left endpoint of a horizontal edge, making an angle in $[0, \frac{3 \pi}{5})$ with this edge and traveling parallel to $R^k w^{-1} (2 \varphi,0)$ for some $k$. One is a short saddle connection and two are long. One of the long ones bounds a cylinder with length $L=|w^{-1} (2 \varphi,0)|$ on the left, and the other bounds a cylinder with length $L$ on the right. This follows from an elementary argument using the fact that the Veech group contains the rotation by angle $\frac{\pi}{5}$ and Veech group elements can be used to send any periodic direction to the horizontal direction.
\end{remark}

\begin{remark}
The {\em combinatorial length} of a saddle connection on $\Pi_5$, $D$ or $\tilde D$ is the number of interiors of pentagons passed through counting multiplicity.
Note that combinatorial length is preserved under the natural covering maps between these surfaces. Suppose $\sigma$ is a long saddle connection on $\Pi_5$ with 
$$\hol~\sigma=L \cdot (\cos \theta, \sin \theta) = (a+bs^2,cs+ds^3)
\quad \text{with} \quad  
\theta \in [0, \tfrac{3\pi}{5})
\quad \text{and} \quad  
s=2 \sin \frac{\pi}{5}= \sqrt{\frac{5-\sqrt{5}}{2}}.$$
Suppose also that $\sigma$ bounds a cylinder of length $L$ on the left. Then the combinatorial length of $CL(\sigma)$ is 
$$CL(\sigma) = \begin{cases}
- b - c - 4d & \text{if $\theta \in [0,\frac{\pi}{5})$,} \\
a + 3b - d - 1 & \text{if $\theta \in [\frac{\pi}{5},\frac{2\pi}{5}),$} \\
2c + 6d - 1 & \text{if $\theta \in [\frac{2\pi}{5},\frac{3\pi}{5})$}.
\end{cases}$$
This can be derived from Davis and Leli\`evre's work on the combinatorial length of closed geodesics on the double pentagon \cite[Theorem  3.15]{davis2018periodic} by comparing the combinatorial length of the saddle connection and the cylinder it bounds on the left. The code from this article explicitly verifies that the formula works for the saddle connections in Tables \ref{table:list} and \ref{table:shortest}.
\end{remark}

\newpage

\begin{table}
  \centering
\begin{tabular}{lcccc}
 & Coset Representative & Exact Vector & 
 \begin{tabular}{@{}c@{}}Approximate \\ Length\end{tabular} &
 \begin{tabular}{@{}c@{}}Approximate \\ Vector\end{tabular} \\
 \hline
1 & $RRRTT$ & $ \left(-3 s^{2} + 12,\,-5 s^{3} + 19 s\right) $ & 16.2386 & (7.85410, 14.2128) \\
2 & $RRRTTRRT$ & $ \left(-21 s^{2} + 76,\,-5 s^{3} + 19 s\right) $ & 49.0816 & (46.9787, 14.2128) \\
3 & $RTTTRT$ & $ \left(-19 s^{2} + 72,\,-7 s^{3} + 25 s\right) $ & 49.1630 & (45.7426, 18.0171) \\
4 & $RRTRTTTT$ & $ \left(-16 s^{2} + 59,\,-33 s^{3} + 120 s\right) $ & 94.9181 & (36.8885, 87.4567) \\
5 & $RTRRTTRRRT$ & $ \left(-29 s^{2} + 106,\,-27 s^{3} + 99 s\right) $ & 98.0031 & (65.9230, 72.5173) \\
6 & $RRTTRTRRRT$ & $ \left(-30 s^{2} + 109,\,-27 s^{3} + 98 s\right) $ & 98.2417 & (67.5410, 71.3418) \\
7 & $RRTRRTTTT$ & $ \left(12 s^{2} - 41,\,-41 s^{3} + 148 s\right) $ & 110.117 & (-24.4164, 107.376) \\
8 & $RTTRTTTT$ & $ \left(-41 s^{2} + 152,\,-23 s^{3} + 81 s\right) $ & 111.521 & (95.3394, 57.8554) \\
9 & $RTRRTTTTT$ & $ \left(-18 s^{2} + 67,\,-39 s^{3} + 142 s\right) $ & 111.810 & (42.1246, 103.572) \\
10 & $RTRTRTT$ & $ \left(-23 s^{2} + 84,\,-39 s^{3} + 141 s\right) $ & 114.941 & (52.2148, 102.396) \\
11 & $RRRTTRTTTT$ & $ \left(17 s^{2} - 56,\,-53 s^{3} + 191 s\right) $ & 142.196 & (-32.5066, 138.430) \\
12 & $RRRTTTTRTT$ & $ \left(13 s^{2} - 42,\,-55 s^{3} + 199 s\right) $ & 146.570 & (-24.0344, 144.586) \\
13 & $RRTTRRTRT$ & $ \left(-68 s^{2} + 247,\,-21 s^{3} + 76 s\right) $ & 162.687 & (153.026, 55.2268) \\
14 & $RRTRTRRTT$ & $ \left(-66 s^{2} + 239,\,-27 s^{3} + 98 s\right) $ & 164.109 & (147.790, 71.3418) \\
15 & $RTRRTRTRRT$ & $ \left(-57 s^{2} + 208,\,-63 s^{3} + 229 s\right) $ & 211.047 & (129.228, 166.856) \\
16 & $RRRTTRTTRT$ & $ \left(-87 s^{2} + 318,\,-29 s^{3} + 105 s\right) $ & 211.985 & (197.769, 76.3215) \\
17 & $RRRTRTTTTRT$ & $ \left(-86 s^{2} + 316,\,-30 s^{3} + 108 s\right) $ & 212.102 & (197.151, 78.2237) \\
18 & $RTRRRTTTRTT$ & $ \left(-94 s^{2} + 345,\,-43 s^{3} + 154 s\right) $ & 242.130 & (215.095, 111.180) \\
19 & $RRTRTRRRTRTT$ & $ \left(21 s^{2} - 74,\,-113 s^{3} + 409 s\right) $ & 300.613 & (-44.9787, 297.229) \\
20 & $RTRRTRTTTTTT$ & $ \left(42 s^{2} - 145,\,-127 s^{3} + 458 s\right) $ & 343.284 & (-86.9574, 332.087) \\
21 & $RRRTTRTTTTRT$ & $ \left(-153 s^{2} + 560,\,-53 s^{3} + 191 s\right) $ & 375.042 & (348.559, 138.430) \\
22 & $RTTRTRRRTTRT$ & $ \left(-103 s^{2} + 374,\,-135 s^{3} + 487 s\right) $ & 422.378 & (231.658, 353.182) \\
23 & $RTTTTTTRTRTT$ & $ \left(-88 s^{2} + 319,\,-149 s^{3} + 536 s\right) $ & 435.359 & (197.387, 388.041) \\
24 & $RRRTTRTRTTRT$ & $ \left(-150 s^{2} + 544,\,-198 s^{3} + 716 s\right) $ & 619.524 & (336.705, 520.038) \\
25 & $RRTTTRTRRTRT$ & $ \left(-158 s^{2} + 572,\,-198 s^{3} + 716 s\right) $ & 628.894 & (353.649, 520.038) \\
26 & $RTRTRRTRRTTTT$ & $ \left(-132 s^{2} + 479,\,-263 s^{3} + 952 s\right) $ & 752.761 & (296.581, 691.874) \\
27 & $RTTRTRTRTRT$ & $ \left(-362 s^{2} + 1312,\,-114 s^{3} + 412 s\right) $ & 865.091 & (811.728, 299.131) \\
28 & $RTRTRTRTTRT$ & $ \left(-380 s^{2} + 1377,\,-125 s^{3} + 452 s\right) $ & 912.920 & (851.853, 328.283) \\
29 & $RTTRRTTTRRTRRT$ & $ \left(-15 s^{2} + 56,\,-375 s^{3} + 1357 s\right) $ & 986.655 & (35.2705, 986.025) \\
30 & $RTRRTRTRRTTRT$ & $ \left(27 s^{2} - 94,\,-423 s^{3} + 1531 s\right) $ & 1114.04 & (-56.6869, 1112.59) \\
31 & $RRTRTRTRTRRTT$ & $ \left(84 s^{2} - 302,\,-518 s^{3} + 1874 s\right) $ & 1374.11 & (-185.915, 1361.48) \\
\end{tabular}
\vspace{1em}
\caption{Representatives of closed saddle connections in each unfolding-symmetry class as described in Appendix \ref{appendix:list}.}
\label{table:list}
\end{table}

\newpage 

\section{Shortest closed saddle connections}
\label{appendix:shortest}

Table \ref{table:shortest} lists a shortest closed saddle connection in each equivalence class on $\tilde D$. Rows are numbered to match rows of Table~\ref{table:list}, and we omit rows where a shortest saddle connection already appears in Table~\ref{table:list}.

We will briefly explain how we arrived at this list. FlatSurf \cite{flatsurf} has the ability to form a list of the saddle connections of length less than $L$ leaving a singularity on a translation surface. We apply this to the double pentagon to obtain all saddle connections of length less than $622$ up to rotation. 

Let $\arg:{\mathbb R}^2 \to {\mathbb R}/2 \pi {\mathbb Z}$ be the map sending a vector to its direction.
There is an analog of the slow continued fraction algorithm that takes the holonomy vector ${\mathbf v} \in \arg^{-1}\big([0,\tfrac{3 \pi}{5})\big)$ of a saddle connection of $\Pi_5$ and produces an element $w$ of the semigroup $\langle R^{-1}, T^{-1}\rangle$ that sends $w$ to either $(2,0)$ if the saddle connection is short and $(2 \phi, 0)$ if the saddle connection is long.  One step of such an algorithm is given by
$$\Phi:\arg^{-1}\big([0,\tfrac{4 \pi}{5})\big) \to \arg^{-1}\big([0,\tfrac{4 \pi}{5})\big); \quad
{\mathbf v} \mapsto \begin{cases}
T^{-1} {\mathbf v} & \text{if $\arg({\mathbf v}) \in [0, \frac{\pi}{5})$,} \\
R^{-1} {\mathbf v} & \text{otherwise.} \\
\end{cases}$$
Observe that all holonomies of saddle connections converge as suggested because the collection of all holonomies of saddle connections is a discrete set.  When $T^{-1}$ is applied, it shrinks non-horizontal vectors, while $R^{-1}$ preserves vector lengths, but can only be applied at most three times in a row. This idea is due to Rosen \cite{rosen1954class}.
This algorithm also produces the word $w$, which determines a coset in $\langle T, J \rangle \bs V_\pm(\Pi_5) / V_\pm(\tilde D)$ associated to the saddle connection.

Further, we can tell using work in this paper if a saddle connection $\sigma$ with holonomy ${\mathbf v}=\hol~\sigma$ is closed. First it must be a long saddle connection, so iterative application of $\Phi$ should bring ${\mathbf v}$ to $(2 \phi, 0)$. The algorithm also produces $w \in \langle R^{-1}, T^{-1}\rangle$. When viewed as a matrix, we see there is an affine homeomorphism $\phi:\tilde D \to w(\tilde D)$ with derivative $w$ which carries $\sigma$ to a long horizontal cylinder on $w(\tilde D)$. Thus $\sigma$ is closed if and only if $w(\tilde D)$ represents a surface with closed long horizontal saddle connections. 
The surface index of $w(\tilde D)$ can be calculated by applying the sequence of permutations $r^{-1}$ and $t^{-1}$ corresponding to $w$, where $r$ and $t$ are the permutations found in \S \ref{sect:dodecahedron monodromy}. We know the orbits of surface indices under $\langle T,J\rangle$ and which orbits correspond to closed long horizontal saddle connections, so we can determine whether $\sigma$ is closed, and if so, which equivalence class it lies in.

\begin{table}
  \centering
\begin{tabular}{lcccc}
 & 
 Coset Representative &
 Exact Vector & 
 \begin{tabular}{@{}c@{}}Approximate \\ Length\end{tabular} &
 \begin{tabular}{@{}c@{}}Approximate \\ Vector\end{tabular} \\
 \hline
10 & $\big(T^2(T^2R)^2\big)^{-1}$ & $ \left(-52 s^{2} + 183,\,-3 s^{3} + 12 s\right) $ & 111.521 & (111.138, 9.23305) \\
16 & $\big(TR^3T^3(TR^3)^2TR\big)^{-1}$ & $ \left(-62 s^{2} + 227,\,-89 s^{3} + 326 s\right) $ & 277.350 & (141.318, 238.647) \\
17 & $\big(T^2(T^2R)^2R^2\big)^{-1}$ & $ \left(-48 s^{2} + 174,\,-34 s^{3} + 126 s\right) $ & 142.196 & (107.666, 92.8855) \\
18 & $\big(T^2RT(RTR^2)^2\big)^{-1}$ & $ \left(-88 s^{2} + 319,\,-75 s^{3} + 272 s\right) $ & 279.518 & (197.387, 197.910) \\
19 & $\big(T^2RT(TR^3)^3\big)^{-1}$ & $ \left(-64 s^{2} + 234,\,-54 s^{3} + 198 s\right) $ & 205.478 & (145.554, 145.035) \\
20 & $\big(T(TR)^2R(RT)^2R^2\big)^{-1}$ & $ \left(-94 s^{2} + 341,\,-81 s^{3} + 294 s\right) $ & 300.613 & (211.095, 214.025) \\
21 & $\big(T^2RT^3R^3TR\big)^{-1}$ & $ \left(-109 s^{2} + 390,\,-13 s^{3} + 49 s\right) $ & 242.130 & (239.366, 36.4832) \\
22 & $\big(TRT^3(TR)^2R^2\big)^{-1}$ & $ \left(-93 s^{2} + 334,\,-19 s^{3} + 71 s\right) $ & 212.102 & (205.477, 52.5981) \\
23 & $\big(T(R^2TR)^4R^2T^2R^2\big)^{-1}$ & $ \left(-66 s^{2} + 239,\,-89 s^{3} + 322 s\right) $ & 276.716 & (147.790, 233.944) \\
24 & $\big((TR)^2RT^2R^3TR\big)^{-1}$ & $ \left(-83 s^{2} + 302,\,-91 s^{3} + 331 s\right) $ & 305.441 & (187.297, 241.275) \\
25 & $\big(T^2(T^2R^3)^2TR^2\big)^{-1}$ & $ \left(-159 s^{2} + 576,\,-13 s^{3} + 47 s\right) $ & 357.899 & (356.267, 34.1320) \\
26 & $\big(T^6R^3T^2R^2\big)^{-1}$ & $ \left(-116 s^{2} + 418,\,-6 s^{3} + 22 s\right) $ & 258.195 & (257.692, 16.1150) \\
27 & $\big(T^2(T^2R)^2TR^3\big)^{-1}$ & $ \left(-165 s^{2} + 592,\,-11 s^{3} + 41 s\right) $ & 365.237 & (363.976, 30.3278) \\
28 & $\big(TRT^2(T^2R)^2R^2\big)^{-1}$ & $ \left(-164 s^{2} + 590,\,-34 s^{3} + 126 s\right) $ & 375.042 & (363.358, 92.8855) \\
29 & $\big(T^3(TR)^2R^2T^2R^3\big)^{-1}$ & $ \left(-117 s^{2} + 424,\,-85 s^{3} + 311 s\right) $ & 347.229 & (262.310, 227.512) \\
30 & $\big(T^2R^2(TR^3T)^2R^3\big)^{-1}$ & $ \left(-143 s^{2} + 522,\,-23 s^{3} + 83 s\right) $ & 329.919 & (324.379, 60.2066) \\
31 & $\big(T(TR^3TR)^2RTR^2\big)^{-1}$ & $ \left(-272 s^{2} + 984,\,-48 s^{3} + 174 s\right) $ & 621.137 & (608.105, 126.569) \\
\end{tabular}
\vspace{1em}
\caption{Shortest saddle connections in cosets corresponding to Table \ref{table:list}. Saddle connections in cosets 1--9 and 11--15 were not included
because Table \ref{table:list} already lists a shortest representative. Conventions from Appendix \ref{appendix:list} are followed here.}
\label{table:shortest}\end{table}

\begin{figure}[!ht]
    \centering
    \includegraphics[width=0.62\textwidth]{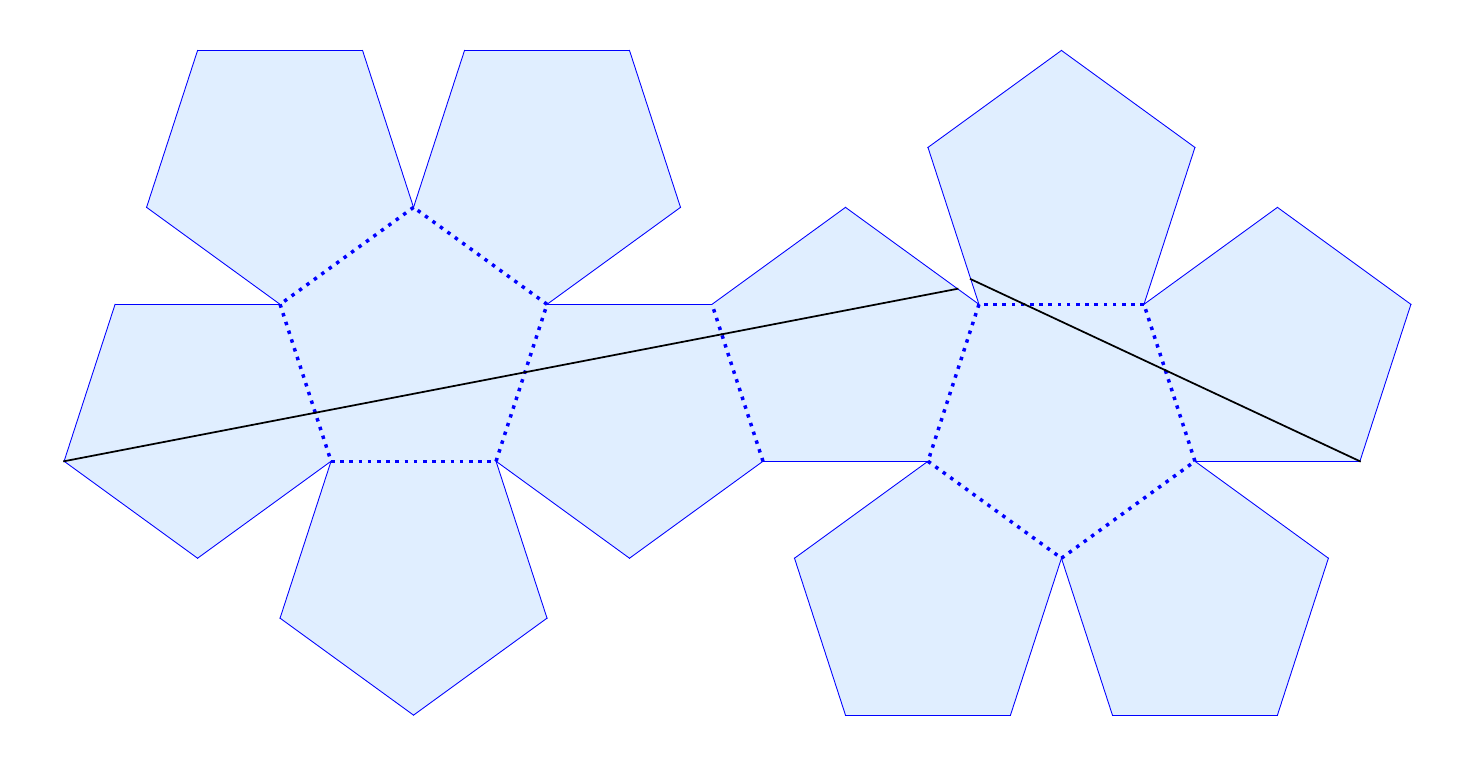}
    \includegraphics[width=0.62\textwidth]{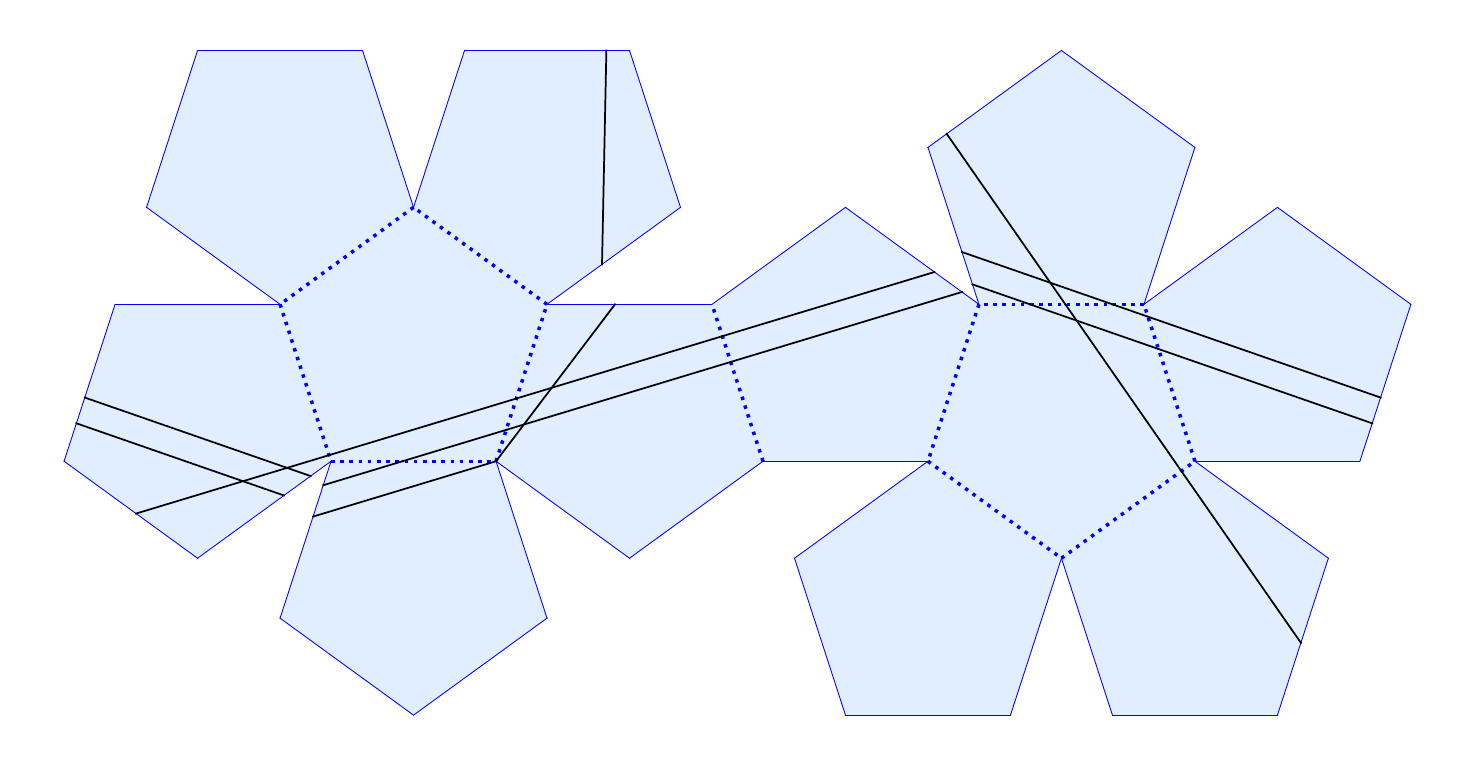}
    \includegraphics[width=0.62\textwidth]{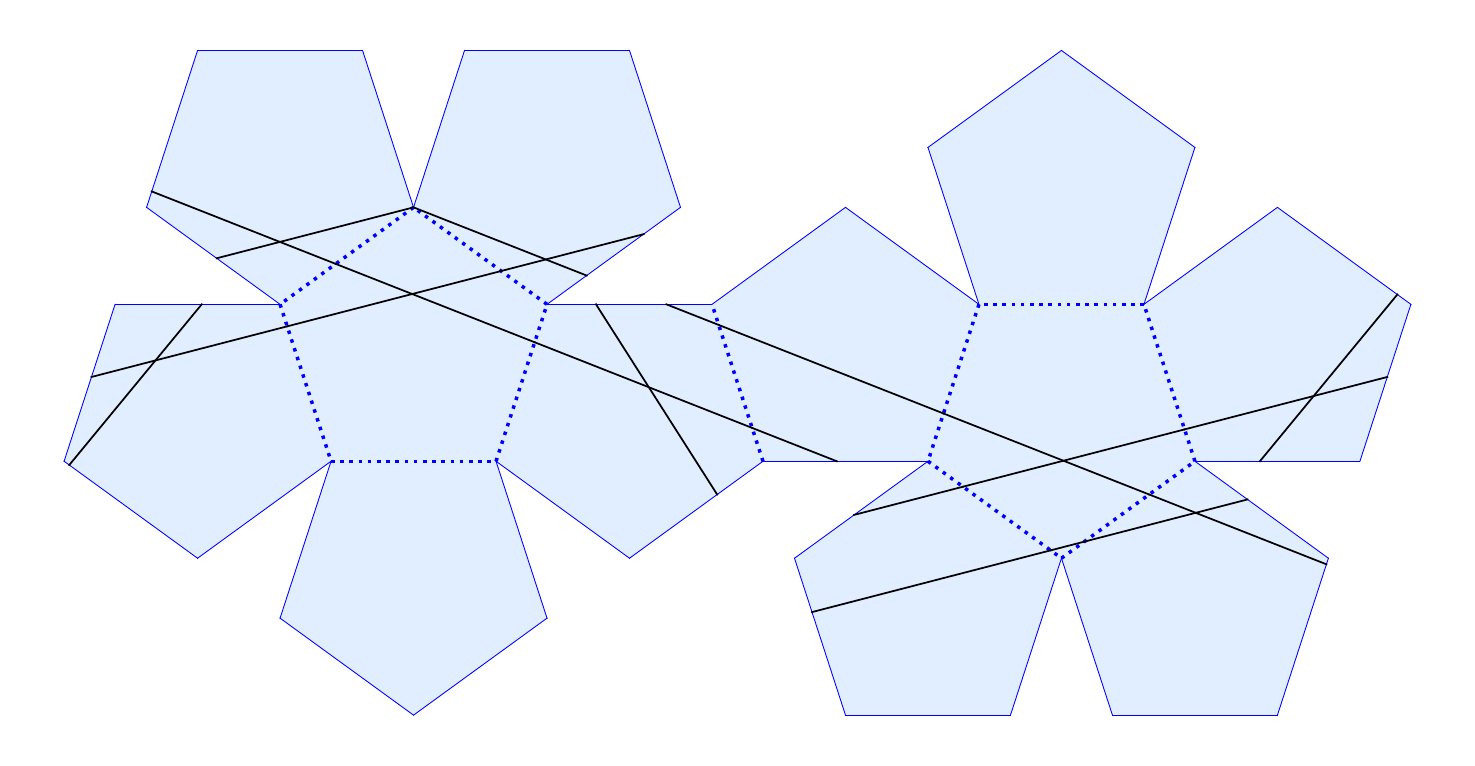}
    \caption{Representatives of the first three equivalence classes from Table~\ref{table:list}.}\label{fig:sc_0}
\end{figure}

\newpage

\bibliographystyle{amsalpha}
\bibliography{bibliography}

\end{document}